\numberwithin{equation}{section}
\numberwithin{figure}{section}
\theoremstyle{plain}
\newtheorem*{reftheorem*}{Theorem}
\newtheorem*{refcor*}{Corollary}
\newtheorem{thm}{Theorem}[section]
\newtheorem{lemma}[thm]{Lemma}
\newtheorem{prop}[thm]{Proposition}
\newtheorem{corollary}[thm]{Corollary}
\theoremstyle{definition}
\newtheorem{defn}[thm]{Definition}
\newtheorem{remark}[thm]{Remark}
\newtheorem{notation}[thm]{Notation}
\newtheorem{question}[thm]{Question}
\newtheorem{construction}[thm]{Construction}
\newcommand{\Z}{\mathbb{Z}}
\newcommand{\N}{\mathbb{N}}
\newcommand{\F}{\mathbb{F}}
\newcommand{\cP}{\mathcal{P}}
\newcommand{\cT}{\mathcal{T}}
\newcommand{\G}{\Gamma}
\newcommand{\CAT}{\operatorname{CAT}}
\newcommand{\Aut}{\operatorname{Aut}}
\newcommand{\Confdim}{\operatorname{Confdim}}
\newcommand{\St}{\operatorname{St}}
\newcommand{\Lk}{\operatorname{Lk}}
\newcommand{\la}{\langle}
\newcommand{\ra}{\rangle}
\newcommand{\bdry}{\partial}
\newcommand{\p}{\partial}
\newcommand{\from}{\colon\thinspace}
\newcommand{\bt}{\boldsymbol{t}}
\newcommand{\LeviP}{\mathcal{P}}
\newcommand{\LeviA}{\mathcal{A}}
\newcommand{\LeviB}{\mathcal{B}}
\newcommand{\LeviT}{\mathcal{T}}
\newcommand{\td}{\mathrm{TD}}
\DeclareRobustCommand{\edge}{%
  \mathrel{%
    \mathchoice
      {\tikz[baseline=-0.6ex,line width=0.45pt]\draw (0,0)--(.9em,0);}% displaystyle
      {\tikz[baseline=-0.55ex,line width=0.45pt]\draw (0,0)--(.8em,0);}% textstyle
      {\tikz[baseline=-0.35ex,line width=0.35pt]\draw (0,0)--(0.6em,0);}% scriptstyle
      {\tikz[baseline=-0.30ex,line width=0.30pt]\draw (0,0)--(0.5em,0);} % scriptscriptstyle
  }%
}
  \newif\ifshowfigurealttext
\newcommand{\figurealttext}[1]{%
  \ifshowfigurealttext
    \par\smallskip
    {\small\noindent\textbf{Alt text:} #1\par}%
  \fi
}
\title[Conformal dimension, Pontryagin boundaries, and algebraic fibering of RACGs]{Conformal dimension bounds, Pontryagin sphere boundaries, and  algebraic fibering of right-angled Coxeter groups}
\date{June 12, 2025}
\subjclass[2020]{Primary 20F65; Secondary 20F55, 57M07, 30L10, 51E24}
\keywords{Right-angled Coxeter groups, hyperbolic groups, quasi-isometry classification, conformal dimension, Gromov boundary, Pontryagin sphere, algebraic fibering, virtual cohomological dimension, round trees}
\author[Cashen]{Christopher H.\ Cashen} 
\address{TU Wien\\  Institute of Discrete Mathematics and Geometry\\Wiedener
  Hauptstrasse~8-10\\1040 Vienna \\Austria\\\href{https://orcid.org/0000-0002-6340-469X}{\includegraphics[scale=.75]{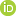}~0000-0002-6340-469X}}
\email{christopher.cashen@tuwien.ac.at}
\author[Dani]{Pallavi Dani}
\address{Department of Mathematics, Louisiana State University, Baton
  Rouge, LA 70803–4918, USA\\
  \phantom{fill up the line}
 \href{https://orcid.org/0000-0001-7653-2618}{\includegraphics[scale=.75]{ORCID-iD_icon-16x16}~0000\nobreakdash-0001\nobreakdash-7653\nobreakdash-2618}}
\email{pdani@math.lsu.edu}
\author[Schreve]{Kevin Schreve}
\address{Department of Mathematics\\  Louisiana State University\\Baton Rouge\\ Louisiana \\USA\\}
\email{kschreve@lsu.edu}
\author[Stark]{Emily Stark}
\address{Department of Mathematics\\  Wesleyan University\\ Middletown \\ Connecticut \\USA\\}
\email{estark@wesleyan.edu}
\begin{document}
\begin{abstract}
We introduce a graph-theoretic condition, called $(n,m)$--branching, that ensures a combinatorial round tree with controlled branching parameters can be quasi-isometrically embedded in the Davis complex of the right-angled Coxeter group defined by the graph. This construction yields a lower bound on the conformal dimension of the boundary of such a hyperbolic group. We exhibit numerous families of graphs with this property, including many 1-dimensional spherical buildings.

We prove an embedding result, showing that under mild hypotheses a flag-no-square graph embeds as an induced subgraph in a flag-no-square triangulation of a closed surface.
We use this to embed our branching graphs into graphs presenting hyperbolic right-angled Coxeter groups with Pontryagin sphere boundary.  We conclude there are examples of such groups with conformal dimension tending to infinity, and hence, there are infinitely many quasi-isometry classes within this family.

We use conformal dimension to show that recent work of Lafont--Minemyer--Sorcar--Stover--Wells can be upgraded to conclude that for every $n \geq 2$ there exist infinitely many quasi-isometry classes of hyperbolic right-angled Coxeter groups that virtually algebraically fiber and have virtual cohomological dimension $n$.

\end{abstract}
\maketitle
\thispagestyle{tp}

\section{Introduction}\label{sec:introduction}
%\section{Introduction}\label{sec:introduction}

The goal of this paper is to exhibit infinitely many quasi-isometry classes within families of right-angled Coxeter groups that satisfy properties of interest.
We define conditions on the presentation graph of the Coxeter group that imply the group has a desired property, and then we show that the graph conditions are loose enough to allow rich families of subgraphs, from which we deduce quasi-isometry invariants via conformal dimension. 
We focus on the following two properties:
\begin{enumerate}[1. ]
\item Hyperbolic with Pontryagin sphere boundary.
  \item Hyperbolic, virtually algebraically fibered, and of any given virtual cohomological dimension.
  \end{enumerate}
  To differentiate quasi-isometry types, we analyze the Gromov boundaries of the groups. 
  The homeomorphism type of the boundary of a hyperbolic group is a quasi-isometry invariant, but it is too coarse for our purposes.
  Metric information refines this invariant: the quasisymmetry homeomorphism type of the boundary is a complete quasi-isometry invariant.
Pansu's conformal dimension \cite{pansu}  is a powerful quasisymmetry invariant  that reveals the fractal behavior among deformations of a metric space. 

In this paper we develop techniques for bounding conformal dimensions of boundaries of hyperbolic right-angled Coxeter groups. 
We show that the families described above contain sequences of groups whose boundary conformal dimensions grow without bound.
In particular, this implies that there are infinitely many quasi-isometry types of groups within the sequence. 

\subsection{Pontryagin sphere boundaries}
Pansu's original work computed the conformal dimension of the boundaries of the rank-one symmetric spaces, which are each homeomorphic to the sphere.
A primary application of the work in this paper gives lower bounds on the conformal dimension of Pontryagin sphere boundaries.
The Pontryagin sphere is an inverse limit of surfaces and arises naturally as the boundary of 3--dimensional hyperbolic pseudo-manifolds, where the link of each point is either the sphere or a higher genus surface.
Right-angled Coxeter groups provide an explicit source of a vast family of groups with Pontryagin sphere boundary; Fischer~\cite{fischer} proved the boundary of each right-angled Coxeter group defined by a flag-no-square triangulation of an orientable higher-genus surface is homeomorphic to the Pontryagin sphere. 

        \begin{reftheorem*}[\Cref{thm:Psphere_lower_bound}] 
          There exists a family of hyperbolic right-angled Coxeter groups $W_i$ such that $\bdry W_i$ is the Pontryagin sphere and the conformal dimension of $\bdry W_i$ tends to infinity as $i \to \infty$.
           In particular, there are infinitely many quasi-isometry classes of such groups. 
      \end{reftheorem*}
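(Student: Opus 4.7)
The plan is to assemble the three main technical ingredients the paper has introduced: the $(n,m)$-branching condition on a graph $\Lambda$, which produces a quasi-isometric embedding of a combinatorial round tree into the Davis complex of $W_\Lambda$ and hence a lower bound on $\Confdim \bdry W_\Lambda$; the embedding theorem, which realizes flag-no-square graphs as induced subgraphs of flag-no-square triangulations of closed surfaces; and Fischer's theorem, which identifies the boundary of $W_\Sigma$ with the Pontryagin sphere when $\Sigma$ is a flag-no-square triangulation of an orientable surface of genus at least two.

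First, I would select, from the families exhibited earlier in the paper (for instance, certain $1$-dimensional spherical buildings with growing valences), a sequence $\Lambda_i$ of flag-no-square graphs satisfying $(n_i,m_i)$-branching with parameters chosen so that the associated round-tree conformal dimension lower bound diverges. Next, applying the embedding theorem, I would produce for each $i$ a flag-no-square triangulation $\Sigma_i$ of a closed orientable surface of genus at least $2$ containing $\Lambda_i$ as an \emph{induced} subgraph. Since $\Lambda_i \subseteq \Sigma_i$ is induced, the special subgroup $W_{\Lambda_i} \le W_{\Sigma_i}$ stabilizes an isometrically (hence quasi-isometrically) embedded convex subcomplex of the Davis complex of $W_{\Sigma_i}$. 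Composing this inclusion with the round-tree QI-embedding coming from $(n_i,m_i)$-branching yields a QI-embedded round tree, with unchanged branching parameters, inside the Davis complex of $W_{\Sigma_i}$.

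To finish, I would invoke Moussong's criterion: the flag-no-square property forces $W_{\Sigma_i}$ to be hyperbolic, so its Gromov boundary is well-defined and, by Fischer's theorem, homeomorphic to the Pontryagin sphere. The round-tree lower bound then gives $\Confdim \bdry W_{\Sigma_i} \to \infty$ as $i \to \infty$. Since conformal dimension is a quasisymmetry invariant of the boundary and hence a quasi-isometry invariant of the group, infinitely many of the $W_{\Sigma_i}$ must fall in distinct quasi-isometry classes, proving the theorem.

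The principal obstacle is simultaneity: one must exhibit graphs $\Lambda_i$ whose branching parameters $(n_i,m_i)$ diverge \emph{and} which satisfy whatever "mild hypotheses" the embedding theorem requires on size, maximum degree, and local link structure, while keeping the ambient triangulation $\Sigma_i$ flag-no-square and of genus at least two. The explicit computation of the conformal dimension lower bound in terms of $(n_i,m_i)$ and the verification that the chosen families (for example the spherical buildings) meet both sets of hypotheses simultaneously is the technical heart of the argument; once this compatibility is in hand, the remainder is a routine concatenation of the QI-embedding, Fischer's theorem, and QI-invariance of $\Confdim$.
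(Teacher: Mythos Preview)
Your proposal is correct and follows essentially the same route as the paper: pick branching graphs from the explicit families, embed each as an induced subgraph of a flag-no-square surface triangulation, apply Fischer, and use quasi-convexity of the special subgroup to transfer the round-tree conformal dimension bound. The ``principal obstacle'' you flag is milder than you fear---the embedding lemma requires only girth $\geq 5$, which $(n,m)$-branching already guarantees, and positive genus (all Fischer needs) is forced by the nonplanarity of $(n,m)$-branching graphs for $n\geq 3$.
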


      We prove \Cref{thm:Psphere_lower_bound} by embedding into our groups certain metric spaces whose boundaries have known conformal dimension.
      These spaces, \emph{round trees}, were introduced by Gromov~\cite[Section 7.C3]{gromov93} for this purpose.
      They have been employed by Bourdon~\cite{bourdon95} in the setting of right-angled Fuchsian buildings and by Mackay~\cite{mackay, mackay16} in the setting of certain small cancellation groups and random groups. Mackay~\cite{mackay16} subsequently developed a combinatorial version of round trees, useful in producing lower bounds on the conformal dimension of the boundary of hyperbolic and relatively hyperbolic groups~\cite{frost, fieldguptalymanstark}. See \cite{stark-survey} for background on this technique. 

      We introduce a graph theoretic condition, \emph{$(n,m)$--branching} (\Cref{defn:n6_branching}), that ensures that a combinatorial round tree with prescribed branching parameters quasi-isometrically embeds in the Davis complex of the right-angled Coxeter group defined by the graph.
      Mackay's bounds then yield:
    \begin{reftheorem*}[\Cref{thm:confdim_lower_bounds}] 
        If $\Gamma$ is a graph with
        $(n,m)$--branching, then the right-angled Coxeter
        group $W_\Gamma$ is hyperbolic and the conformal dimension of the boundary satisfies:
            \[ \Confdim(\p W_\Gamma) \geq 1 + \frac{\log n}{\log 3m-7} \]
    \end{reftheorem*}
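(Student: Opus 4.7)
The plan is to leverage the combinatorial round tree machinery developed by Mackay, using the $(n,m)$--branching hypothesis to encode exactly the branching data needed to produce such a tree inside the Davis complex of $W_\Gamma$.

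First I would verify that $W_\Gamma$ is hyperbolic. The $(n,m)$--branching condition (as stated in \Cref{defn:n6_branching} in the paper) presumably forces $\Gamma$ to be flag and to have no induced squares, since the $m$ in the denominator of the conformal dimension bound is meant to reflect the girth-like branching parameter; by Moussong's criterion, flag-no-square graphs give hyperbolic right-angled Coxeter groups, so hyperbolicity is immediate from the hypothesis.

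Next, the main geometric content: I would unpack the $(n,m)$--branching definition to construct a combinatorial round tree $T$ with the right parameters. Concretely, a combinatorial round tree has a base arc, and at each stage every arc branches into $n$ new arcs, each of which has length controlled by $m$. The hypothesis of $(n,m)$--branching should allow one to inductively build, inside the Cayley graph / Davis complex of $W_\Gamma$, a configuration of labeled paths organized as such a tree: at each branching vertex, the graph $\Gamma$ provides $n$ directions that commute appropriately with the ambient structure so that distinct branches remain far apart. One then verifies that this embedding is a quasi-isometric embedding, using the standard tool that geodesic words in right-angled Coxeter groups avoiding certain commutation patterns remain uniformly quasi-geodesic. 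This QI embedding of $T$ into the Davis complex induces a topological embedding of $\bdry T$ into $\bdry W_\Gamma$ which is quasisymmetric with respect to visual metrics.

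Finally, I would invoke Mackay's lower bound \cite{mackay16} for the conformal dimension of the boundary of a combinatorial round tree with horizontal branching $n$ and arc parameter $m$, which gives
\[ \Confdim(\bdry T) \geq 1 + \frac{\log n}{\log(3m-7)}. \]
Since conformal dimension does not decrease under a quasisymmetric embedding, and since the visual boundary of $T$ embeds quasisymmetrically into $\bdry W_\Gamma$, the same lower bound passes to $\Confdim(\bdry W_\Gamma)$.

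The main obstacle will be the second step: translating the purely graph-theoretic $(n,m)$--branching data into a concrete, uniformly quasi-isometric embedding of the round tree into the Davis complex, and in particular controlling how branches of the tree stay separated. This is where the definition of $(n,m)$--branching has to be designed precisely so the commutation relations of $W_\Gamma$ cooperate with the tree structure; the verification likely proceeds by an explicit inductive construction, with the branching parameter $n$ matching the graph-theoretic branching and the $3m-7$ term arising from estimating the combinatorial length of arcs produced at each stage.
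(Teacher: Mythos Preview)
Your high-level strategy is the paper's: build a combinatorial round tree inside the Davis complex using the $(n,m)$--branching hypothesis, then apply Mackay's theorem. But two points in your proposal are off-target compared to the actual argument.

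First, the mechanism for the quasi-isometric embedding is not a geodesic-word argument. The paper builds the round tree $A$ as a subcomplex of squares in the Davis complex $X_\Gamma$, checking at each inductive step that the link in $A$ of every vertex is an induced subgraph of $\Gamma$. This means $A$ is locally convex, hence convex in the $\CAT(0)$ cube complex $X_\Gamma$, and convexity immediately gives the QI embedding. The $(n,m)$--branching condition is designed exactly so that the union of the $n$ cycles one attaches at each vertex of the outer boundary path is an \emph{induced} subgraph of $\Gamma$; this is why the definition insists $\bigcup_i H_i$ be induced, not just that the $H_i$ exist. Your proposed route via commutation patterns and quasi-geodesic words is plausible but would be substantially harder to execute and is not what makes the construction work.

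Second, you misstate both the roles of the parameters and the logic of Mackay's theorem. In the construction, $n$ is the \emph{vertical} branching (the number of new strips attached along each outer boundary arc) and $3m-7$ is the \emph{horizontal} branching (the maximum number of new squares meeting a given square, arising from a count over cycles of length $\leq m$ in the link). Mackay's theorem does not assert $\Confdim(\bdry A)\geq 1+\log V/\log H$ and then transfer this via a monotonicity statement for quasisymmetric embeddings; indeed, conformal dimension is not monotone under quasisymmetric embeddings in general. Rather, the theorem says directly: if a combinatorial round tree with parameters $(V,H)$ quasi-isometrically embeds in a hyperbolic complex $X$, then $\Confdim(\bdry X)\geq 1+\log V/\log H$. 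So once the convex round tree is built with $V=n$ and $H\leq 3m-7$, the bound on $\Confdim(\bdry W_\Gamma)$ is immediate.
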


    In \Cref{sec:branching_from_projective_planes}, we exhibit numerous families of graphs with the $(n,m)$-branching condition.
    Examples naturally arise via Levi incidence graphs of finite incidence structures.
    In \Cref{sec:Pontryagin}, we show such graphs can be embedded as induced subgraphs in flag-no-square triangulations of a surface, which yields \Cref{thm:Psphere_lower_bound}. 

    Field, Gupta, Lyman, and Stark \cite[Theorem~D]{fieldguptalymanstark} constructed infinitely many quasi-isometry classes of hyperbolic Coxeter groups with Pontryagin sphere boundary, also using conformal dimension techniques.
    However, that construction used specific features of a family of non-right-angled Coxeter groups that are not adaptable to the right-angled case.

\subsection{Algebraic Fibrations}
A group \emph{algebraically fibers} if it surjects onto $\Z$ with finitely generated kernel. Algebraic fibrations are related to fibering of $3$--manifolds, the Bieri-Neumann-Strebel invariants, and have recently been connected to $L^2$--homology by work of Kielak \cite{Kie20}.
Lafont, Minemyer, Sorcar, Stover, and Wells \cite{LMSSW-fibering} recently provided the first examples of high-dimensional right-angled Coxeter groups that virtually algebraically fiber:
    \begin{reftheorem*}[{\cite[Main Theorem]{LMSSW-fibering}}]\label{LMSSW}
        For every $n \geq 2$ there exist infinitely many isomorphism classes of hyperbolic right-angled Coxeter groups that virtually algebraically fiber and have virtual cohomological dimension~$n$. 
    \end{reftheorem*}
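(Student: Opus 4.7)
The plan is to produce, for each $n \geq 2$, a sequence $\{W_{n,i}\}_{i \in \N}$ of pairwise non-isomorphic right-angled Coxeter groups that are simultaneously (a) word hyperbolic, (b) of virtual cohomological dimension exactly $n$, and (c) virtually algebraically fibered. The strategy is to separate these requirements into graph-theoretic conditions on the defining flag complex $L_{n,i}$ with $W_{n,i} = W_{L_{n,i}^{(1)}}$: by Moussong's criterion, hyperbolicity follows from $L_{n,i}$ being flag-no-square; by Davis's formula for $\textup{vcd}$ of RACGs, controlling dimension amounts to controlling the topology and dimension of $L_{n,i}$; and virtual algebraic fibering will follow from exhibiting a Bestvina--Brady-type Morse function after passing to a finite index subgroup, which in the RACG setting reduces (via the legal system framework of Jankiewicz--Norin--Wise) to a combinatorial labeling condition on $L_{n,i}^{(1)}$.

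First I would build, in each dimension, a single seed complex $L_n$ by an inductive/iterative procedure. A natural approach is strict hyperbolization (Charney--Davis or Januszkiewicz--\'Swi\k{a}tkowski) applied to a suitable PL manifold, which produces a flag-no-square triangulation of dimension $n-1$ whose links are spheres of the appropriate dimension, so that the associated Davis complex is a hyperbolic $n$-manifold (or pseudomanifold) and $W_{L_n}$ has $\textup{vcd} = n$. Alternatively, one can take iterated joins/suspensions of a carefully chosen flag-no-square graph together with a sparsifying modification at each step to kill squares. Next I would verify the legal system / fibering condition: one wants a partition (or $\Z/2$--cohomology class) on vertices of $L_n^{(1)}$ such that every maximal simplex meets each color in a prescribed way, so that the associated index-2 subgroup maps onto $\Z$ with finitely generated kernel via Bestvina--Brady Morse theory on the Davis complex.

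To produce infinitely many isomorphism types in each fixed dimension, I would apply a controlled local modification to $L_n$ that preserves flag-no-square, dimension, and the legal system---for example, subdividing a top-dimensional simplex or attaching a ``gadget'' of increasing complexity along a codimension-one face---and parameterize this by $i$. Isomorphism distinction is the easier part: the groups have different numbers of generators, or different abelianizations, or different Euler characteristics, so infinitely many isomorphism classes follow by counting one of these elementary invariants.

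The hardest step is the simultaneous realization of all conditions. The flag-no-square constraint becomes increasingly restrictive in higher dimensions (it forbids many natural links), while the legal system/fibering condition imposes a rather delicate combinatorial symmetry on those same links. The main obstacle is therefore to design a single construction whose links are simple enough to admit a legal coloring but rich enough to rule out induced $4$--cycles at every stage of the iteration, and to show that the modification used to generate the sequence $\{L_{n,i}\}$ respects both conditions uniformly in~$i$ and~$n$.
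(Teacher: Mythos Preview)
This statement is not proved in the paper; it is quoted verbatim as the Main Theorem of Lafont--Minemyer--Sorcar--Stover--Wells and then strengthened to \Cref{thm:QI_of_fibering_examples}. The paper does, however, summarize the LMSSW construction inside the proof of \Cref{thm:QI_of_fibering_examples}, so that summary is the relevant comparison point.

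Your outline correctly isolates the three ingredients (Moussong for hyperbolicity, Davis's formula for vcd, and a Jankiewicz--Norin--Wise legal system for virtual fibering), and your final paragraph accurately identifies where the difficulty lies. But the proposal stops precisely at that difficulty: neither of your candidate constructions is known to produce complexes admitting a legal system. Strict hyperbolization outputs flag-no-square complexes, but there is no general result saying the output carries a winning JNW coloring, and ``iterated joins/suspensions with a sparsifying modification'' is not a construction at all---joins of no-square complexes are full of squares, and you have not said what the modification is. So as written this is a plan for a proof, not a proof; the key idea that makes it go through is absent.

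The actual LMSSW construction, as recounted in the paper, is quite different from either of your suggestions. They iterate Osajda's thickening operation: start with $X_1$ a pentagon, and given a finite 5--large cube complex $X_n$, form the flag complex $\mathrm{Th}^{\mu_n}(X_n)$ (vertices of $X_n$ duplicated with multiplicity $\mu_n$, simplices whenever first coordinates share a cube), take $G_{n+1}=W_{\mathrm{Th}^{\mu_n}(X_n)}$, and let $X_{n+1}$ be a torsion-free finite quotient of its Davis complex. Five--largeness persists under thickening, giving hyperbolicity; the vcd computation gives $\mathrm{vcd}(G_n)=n$; and the multiplicity $\mu_n$ is chosen at each stage specifically so that the JNW game succeeds. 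The freedom in the multiplicity and in the choice of finite-index quotient is also what produces infinitely many isomorphism types, rather than a post-hoc ``local modification'' of a single seed. In short, the thickening parameter is exactly the mechanism that resolves the tension you flagged between no-square links and the existence of a legal coloring, and your proposal does not contain an analogue of it.
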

We use a generalization of their construction of examples, but we approach the problem from a different perspective that allows us to use an additional suite of tools.
We establish fibering for a wider family of examples, with growing conformal dimension but fixed virtual cohomological dimension.

    \begin{reftheorem*}[\Cref{thm:QI_of_fibering_examples}]
        For every $n \geq 2$ there exist hyperbolic right-angled Coxeter groups $W_i$ that virtually algebraically fiber, have virtual cohomological dimension~$n$, and have $\Confdim(\bdry W_i) \to \infty$. In particular, there are infinitely many quasi-isometry classes of such groups. 
    \end{reftheorem*}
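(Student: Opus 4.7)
The plan is to combine the $(n,m)$-branching machinery of \Cref{thm:confdim_lower_bounds} with a suitable generalization of the Lafont--Minemyer--Sorcar--Stover--Wells (LMSSW) construction. The goal is to produce, for each $n\geq 2$, a sequence of flag-no-square graphs $\Gamma_i$ such that the RACGs $W_{\Gamma_i}$ are hyperbolic with vcd exactly $n$, virtually algebraically fiber, and each $\Gamma_i$ contains an induced $(n_i,m_i)$-branching subgraph $\Lambda_i$ with $\log(n_i)/\log(3m_i-7)\to\infty$. First I would distill the LMSSW construction into a parametrized recipe: isolate the part of the nerve responsible for producing vcd $n$ together with the combinatorial criterion (a link-connectivity condition in the spirit of Bestvina--Brady, adapted to RACGs) that guarantees virtual algebraic fibering, and leave a ``decorating region'' in the defining graph into which an arbitrary flag-no-square graph can be grafted as an induced subgraph without destroying either property.

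Next I would populate this decorating region with the $(n_i,m_i)$-branching graphs produced in \Cref{sec:branching_from_projective_planes}. Because $\Lambda_i$ is an induced subgraph of $\Gamma_i$, the Davis complex $\Sigma_{\Lambda_i}$ embeds as a convex (hence quasi-isometrically embedded) subcomplex of $\Sigma_{\Gamma_i}$, so $\bdry W_{\Lambda_i}$ is a quasisymmetrically embedded subspace of $\bdry W_{\Gamma_i}$. Conformal dimension is monotone under such embeddings, so \Cref{thm:confdim_lower_bounds} applied to $\Lambda_i$ yields
\[ \Confdim(\bdry W_{\Gamma_i}) \;\geq\; 1 + \frac{\log n_i}{\log(3m_i-7)}, \]
which tends to infinity with $i$. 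Since conformal dimension is a quasi-isometry invariant of hyperbolic groups, the sequence $\{W_{\Gamma_i}\}$ splits into infinitely many quasi-isometry classes.

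The main obstacle is the parametrized construction in the first step. Four conditions must be simultaneously maintained on $\Gamma_i$: absence of induced 4-cycles (so that Moussong's criterion gives hyperbolicity), control of the top-dimensional cells of the flag complex (so that vcd is exactly $n$, neither smaller nor larger), preservation of the LMSSW-style link condition underpinning finite generation of the kernel of the height function, and keeping the arbitrary graph $\Lambda_i$ induced. The delicate point is that the higher-dimensional simplices needed to push vcd up to $n$ and to enforce fibering must be attached far enough from $\Lambda_i$ that no forbidden induced 4-cycle appears across the interface, and so that the ascending and descending links retain the required connectivity. I would handle this by attaching all LMSSW decorations through ``buffer'' vertices that are nonadjacent to $\Lambda_i$, and then verify directly that these buffers neither create induced squares with edges of $\Lambda_i$ nor disconnect the links that drive the fibering argument.
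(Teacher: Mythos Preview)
Your proposal has a genuine gap: the ``decorating region'' construction is never actually carried out, and you yourself identify it as the main obstacle. The difficulty is real. Grafting an arbitrary flag-no-square graph into a LMSSW nerve while simultaneously (i) keeping the flag-no-square condition across the interface, (ii) not raising or lowering the vcd, and (iii) preserving whatever link-connectivity condition drives the fibering argument, is a substantial combinatorial problem that you have not solved. The ``buffer vertices'' idea is too vague to evaluate; in particular, the JNW game that LMSSW use depends on a carefully chosen multiplicity function tuned to the geometry of the previous stage, and there is no reason to expect it survives an arbitrary graft. As written, this is a plan rather than a proof.

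The paper's proof avoids all of this by exploiting a parameter already present in the LMSSW construction rather than grafting in new subgraphs. Recall that LMSSW thicken a cube complex $X_n$ with a multiplicity function $\mu$; the paper observes that $W_{\mathrm{Th}^\mu(X_n)}$ is a graph product over $\mathrm{Th}^1(X_n)$ with vertex groups $(\mathbb{Z}/2)^{\mu(v)}$, hence a lattice in a building with underlying Coxeter group $W_{\mathrm{Th}^1(X_n)}$. By Davis--Dymara--Januszkiewicz, vcd is independent of $\mu$, so one can take $\mu\equiv N$ for any $N$. For fibering the paper does \emph{not} replay the JNW game (which would require re-tuning $\mu$); instead it computes that $b_1^{(2)}$ vanishes once the building is sufficiently thick and invokes Kielak's fibering theorem. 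For conformal dimension it does \emph{not} use the $(n,m)$--branching machinery at all: since $\mathrm{Th}^1(X_n)$ contains an induced $\ell$--cycle with $\ell\geq 5$, the group $W_{\mathrm{Th}^N(X_n)}$ contains a convex special subgroup that is a lattice in a Bourdon building, and Bourdon's computation gives conformal dimension tending to infinity with $N$. Thus all three properties are controlled by varying a single integer parameter $N$, with no delicate interface to manage.
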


    \subsection{Further questions}
    Topological dimension provides a lower bound for conformal dimension.
    The topological dimension of the Pontryagin sphere is two, yet this space is a nowhere-planar fractal, so we are very interested in the following:
\begin{question}
    Are there hyperbolic groups $\Gamma$ (perhaps not necessarily right-angled Coxeter) with Pontryagin sphere boundary whose conformal dimension equals 2? If not, are there examples with conformal dimension arbitrarily close to 2?
\end{question}

Douba, Lee, Marquis, and Ruffoni~\cite{DouLeeMar26} recently constructed an example of a hyperbolic right-angled Coxeter group $W_\Gamma$ with Pontryagin sphere boundary that has a convex cocompact isometric action on $\mathbb{H}^4$, so $\Confdim(\bdry W_\Gamma) < 3$ \cite[Proposition 8.3, Proposition 8.4]{CarMac22}.
This is the best upper bound of which we are aware.

In general, establishing upper bounds on conformal dimension is difficult.
It would be interesting to have some local control of this quantity.
For example, our construction of branching graphs to produce high conformal dimension necessarily causes the link topology to explode (see \Cref{lem:branching_implies_nonplanar}).
One might wonder if containing the link topology constrains the conformal dimension of the boundary:

\begin{question} \label{question:upper_bound}
Does there exist a bound $d(g)$ such that if $W_\Gamma$ is a hyperbolic right-angled Coxeter group whose nerve is a closed surface of genus $g$, then the conformal dimension of $\p W_\Gamma$ is at most $d(g)$?
\end{question}

Bourdon and Kleiner obtain upper bounds on conformal dimension via $\ell_p$--cohomology, but their conditions for Coxeter groups \cite[Corollary~8.1]{bourdonkleiner15} force the group to not be right-angled and to have 1--dimensional nerve. In particular, their results do not apply to \Cref{question:upper_bound}.

\medskip

    The work of Lafont--Minemyer--Sorcar--Stover--Wells and the examples provided in this paper exhibit families of high-dimensional hyperbolic groups that virtually algebraically fiber and that fall into infinitely many quasi-isometry classes. Understanding the homeomorphism type of the boundaries of these groups would be interesting and lies in the context of the following question. 

    \begin{question}
        Describe the possible boundaries of hyperbolic groups of virtual cohomological dimension~$n$ that virtually algebraically fiber. 
    \end{question}

    \subsection*{Acknowledgments}
This material is based upon work supported by the U.S.\  National Science Foundation under Award Numbers  DMS-2203325, DMS-2204339, DMS-2407104, and DMS-2505290.
 This research was funded in part by the Austrian Science
Fund (FWF) \href{https://doi.org/10.55776/PAT7799924}{10.55776/PAT7799924} and the Austria-Slovakia research cooperation grant ``Constructions of
expanders and extremal graphs'', OeAD WTZ SK 14/2024 and APVV
SK-AT-23-0019.

The authors are grateful to the Erwin Schr\"odinger Institute, which hosted our Research in Teams project ``Rigidity in Coxeter groups'' in July 2023, of which this paper is a product. We thank ESI for their generous hospitality during our stay.

\section{Preliminaries}\label{sec:preliminaries}
%\section{Preliminaries}\label{sec:preliminaries}

   Throughout the paper we let $\N := \Z_{\geq 1}$.
   The graphs we consider are simplicial, so an edge between distinct vertices $x$ and $y$ is unambiguously denoted $x \edge y$.

    \subsection{Right-angled Coxeter groups and cell complexes}

    \begin{defn}
      Let $\Gamma$ be a finite simplicial graph with vertex set $V\Gamma$ and edge set $E \Gamma$.
      The \emph{ right-angled Coxeter group} $W_{\Gamma}$ is the group with presentation:
            \[ W_{\Gamma} := \la \, v \in V\Gamma \,\, | \,\, v^2=1 \textrm{ for all } v \in V\Gamma, [v,w]=1 \textrm{ if and only if } v\edge w \in E\Gamma\ \ra \]
    \end{defn}

    A right-angled Coxeter group $W_{\Gamma}$ acts properly and cocompactly by isometries on its associated \emph{Davis complex} $X_{\Gamma}$.
    The Davis complex is a $\CAT(0)$ cube complex whose 1--skeleton is the Cayley graph for $W_{\Gamma}$, with bigons coming from the relations $v^2=1$ collapsed. 
    Via the identification with the Cayley graph, we consider edges in the Davis complex to be labeled by generators of the group, or, equivalently, by vertices of $\Gamma$.
    If $\Gamma$ has no triangles, then the link of every vertex in the Davis complex is isomorphic to $\Gamma$. We refer the reader to \cite{davis, dani-survey} for background. 

    \begin{defn}
        Let $L$ be a finite simplicial complex. 
        \begin{enumerate}[1. ]
            \item A subcomplex $L' \subset L$ is \emph{induced} or \emph{full} if 
            whenever a simplex in $L$ has vertex set in $L'$, the simplex itself is in $L'$.
            \item $L$ is \emph{flag} if any collection of pairwise adjacent vertices spans a simplex.
            \item $L$ is \emph{flag-no-square} if $L$ is flag and has no induced $4$-cycles. 
        \end{enumerate}
    \end{defn}
   
    We utilize the following well-known properties. 

    \begin{lemma} \label{lemma:RACG_props}
        Let $\Gamma$ be a finite simplicial graph. 
        \begin{enumerate}[1. ]
            \item The right-angled Coxeter group $W_{\Gamma}$ is hyperbolic if and only if $\Gamma$ has no induced squares. \cite{moussong}
            \item Let $\Gamma' \subset \Gamma$ be an induced subgraph. Then $W_{\Gamma'}$ is isomorphic to a subgroup of $W_{\Gamma}$  whose Davis complex $X_{\Gamma'}$ is isomorphic to a convex subcomplex of $X_\Gamma$. \cite[Section 4.1]{davis} 
        \end{enumerate}
    \end{lemma}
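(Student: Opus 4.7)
\emph{Plan.} Part (1) splits into two directions. The forward implication is direct: an induced 4-cycle on vertices $a,b,c,d$ (with $a\edge b$, $b\edge c$, $c\edge d$, $d\edge a$ and no diagonals) yields a subgroup $\langle a,c\rangle\times\langle b,d\rangle$ isomorphic to $D_\infty\times D_\infty$, which contains $\Z^2$; hence $W_\Gamma$ is not hyperbolic. For the reverse direction I would invoke Moussong's theorem: the Davis complex $X_\Gamma$ with its standard piecewise-Euclidean cube metric is CAT(0), and in the right-angled case the only obstruction to the link condition producing a CAT($-1$)-able (equivalently, hyperbolic) metric is an isometrically embedded Euclidean flat; such a flat forces a $D_\infty\times D_\infty$ parabolic subgroup, which in turn forces an induced 4-cycle in $\Gamma$. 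This direction is the substantive content and I would simply cite \cite{moussong}.

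For part (2), I would start from the universal property of the Coxeter presentation: the inclusion $V\Gamma'\hookrightarrow V\Gamma$ extends to a homomorphism $\iota\from W_{\Gamma'}\to W_\Gamma$ because every defining relation of $W_{\Gamma'}$ is a defining relation of $W_\Gamma$. Here the hypothesis that $\Gamma'$ is \emph{induced} is essential, since it guarantees $v\edge w\in E\Gamma'$ iff $v\edge w\in E\Gamma$, so no extra commutation relations are imposed in $W_{\Gamma'}$ that are absent in $W_\Gamma$. Injectivity of $\iota$ is then the standard Tits theorem on parabolic subgroups of Coxeter groups: the subgroup generated by a subset of the standard generators is itself Coxeter with the restricted presentation.

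For the convex embedding of the Davis complex, the candidate subcomplex is the union of all cubes of $X_\Gamma$ incident to the base vertex whose edge labels lie in $V\Gamma'$, translated by $W_{\Gamma'}$. This is combinatorially isomorphic to $X_{\Gamma'}$ because the vertex links, which are the induced flag completion of $\Gamma'$, and the $W_{\Gamma'}$-action agree on both sides. Convexity in the CAT(0) cube complex sense I would prove by a hyperplane argument: a combinatorial geodesic in $X_\Gamma$ between two vertices of the $W_{\Gamma'}$-orbit crosses exactly one hyperplane for each letter in a reduced word for the connecting group element; by the parabolic subgroup fact these letters lie in $V\Gamma'$, so the geodesic stays in $X_{\Gamma'}$. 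CAT(0) convexity then follows from combinatorial convexity for CAT(0) cube complexes.

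The main obstacle is really the reverse direction of (1), which rests on Moussong's nontrivial link-condition theorem; the remaining ingredients---constructing $\iota$, identifying parabolic subgroups, and verifying convexity via hyperplanes---are either immediate from the presentation or standard facts about Coxeter groups and CAT(0) cube complexes. Accordingly I would present the statement with a brief sketch citing \cite{moussong} and \cite[Section~4.1]{davis}, rather than reproving these results.
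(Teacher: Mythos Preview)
Your proposal is correct and, in fact, more detailed than what the paper does: the paper simply states the lemma as ``well-known properties'' with citations to \cite{moussong} and \cite[Section~4.1]{davis} and provides no proof at all. Your concluding sentence---present the statement with citations rather than reprove it---is exactly the paper's approach.
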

    
    \begin{notation}
      The cell complexes in this paper have cells that are convex polytopes, either simplices or cubes. Every vertex in these two types of polytope has link a simplex.
      If $X$ is the cell complex and $x$ is a vertex of $X$ then the \emph{link} $\Lk(x)$ of $x$ is the simplicial complex obtained from the links of $x$ in each closed cell properly containing it.
      We let $\Lk_{A}(x)$ denote the link of a vertex $x \in A \subset X$ restricted to a subcomplex $A \subset X$.  If $A'\subset A$ is a subcomplex of a polygonal complex $A$, then the \emph{star} of $A'$ in $A$, denoted $\St(A')$, is the union of all closed cells in $A$ that nontrivially intersect $A'$.

          When $X$ is a simplicial graph there is a canonical bijection between edges containing $x$ and neighbors of $x$, so we also use $\Lk(x)$ to denote the set of neighbors of $x$ and $\St(x)$ to be $\Lk(x)\cup\{x\}$. 
    \end{notation}

\subsection{Boundaries and conformal dimension}

\begin{defn}
Let $X$ be a proper geodesic metric space.
The \emph{visual boundary} of $X$ is denoted $\p X$ and consists of all equivalence classes of geodesic rays, where two rays $\gamma, \gamma'\from[0,\infty) \to X$ are \emph{equivalent} if there exists a constant $C$ such that $d\bigl(\gamma(t), \gamma'(t)\bigr) \leq C$ for all $t \in [0,\infty)$. 
\end{defn}

If $X$ is a $\delta$--hyperbolic proper geodesic metric space, then there is a family of metrics on $\p X$, called \emph{visual metrics}, that induce a natural topology on $\p X$. See \cite{bridsonhaefliger} for background. 

\begin{defn}
Let $(Z,d)$ and $(Z',d')$ be metric spaces. A homeomorphism $f\from Z \to Z'$ is a \emph{quasisymmetry} if there exists a homeomorphism $\phi \from \left[0,\infty\right) \to \left[0,\infty\right)$ such that for distinct $x,y,z \in Z$:
  \[
    \frac{d'\bigl(f(x), f(y)\bigr)}{d'\bigl(f(x), f(z)\bigr)} \le \phi\left( \frac{d(x,y)}{d(x,z)} \right)
  \]
The spaces $(Z,d)$ and $(Z',d')$ are \emph{quasisymmetric} if there exists a quasisymmetry $f\from Z \to Z'$.
\end{defn}

\begin{thm}[\cite{bourdon-flot}, Theorem 1.6.4; \cite{buyaloschroeder}, Theorem 5.2.17] \label{thm_qs} 
  Let $X$ and $X'$ be proper geodesic $\delta$--hyperbolic metric spaces, and let $(\partial X,d_a)$ and $(\partial X',d_{a'})$ denote their visual boundaries equipped with visual metrics.
  A quasi-isometry $f\from X \to X'$ extends to a quasisymmetry 
  $\bdry f\from (\partial X,d_a) \to (\partial X',d_{a'})$.
\end{thm}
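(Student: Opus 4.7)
The plan is to construct the boundary extension $\bdry f$ using the Morse lemma for hyperbolic spaces and then verify the quasisymmetry inequality by comparing Gromov products before and after applying $f$.

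First I would define the map. Given a geodesic ray $\gamma\from[0,\infty)\to X$ representing $\xi\in\p X$, the image $f\circ\gamma$ is a $(\lambda,C)$--quasi-geodesic ray in $X'$, where $(\lambda,C)$ are the quasi-isometry constants of $f$. By stability of quasi-geodesics in $\delta'$--hyperbolic spaces (the Morse lemma), there is a unique geodesic ray $\gamma'$ starting at $f(\gamma(0))$ and within Hausdorff distance $R=R(\lambda,C,\delta')$ of $f\circ\gamma$. Setting $\bdry f(\xi):=[\gamma']$, the bounded-distance equivalence of rays transports under $f$ up to an additive error, so $\bdry f$ is well-defined on equivalence classes. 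Applying the same argument to a quasi-inverse of $f$ gives an inverse up to bounded perturbation, so $\bdry f$ is a bijection; continuity with respect to the visual topology follows from the quantitative Morse lemma.

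Next I would recall the Gromov product formulation of visual metrics. Fix basepoints $o\in X$ and $o'=f(o)\in X'$. For a parameter $a$ sufficiently close to $1$, the visual metric satisfies
\[
d_a(\xi,\eta) \asymp a^{-(\xi|\eta)_o},
\]
where $(\xi|\eta)_o$ denotes the Gromov product extended to the boundary, and the multiplicative constants depend only on $\delta$ and $a$. A standard fact is that $(\xi|\eta)_o$ equals the distance from $o$ to any geodesic line from $\xi$ to $\eta$, up to an additive error depending only on $\delta$.

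The heart of the proof is a two-sided comparison of Gromov products. Choose a bi-infinite geodesic $\sigma$ in $X$ from $\xi$ to $\eta$; its image $f\circ\sigma$ is a quasi-geodesic, which by the Morse lemma fellow-travels a bi-infinite geodesic $\sigma'$ in $X'$ from $\bdry f(\xi)$ to $\bdry f(\eta)$. Since $d(o,\sigma)$ approximates $(\xi|\eta)_o$ and $d'(o',\sigma')$ approximates $(\bdry f(\xi)|\bdry f(\eta))_{o'}$, and since distances transform under $f$ as
\[
\tfrac{1}{\lambda}d(o,\sigma)-C \leq d'(o',f(\sigma)) \leq \lambda\, d(o,\sigma)+C,
\]
combining these with the $R$--fellow-traveling of $f(\sigma)$ and $\sigma'$ yields
\[
\tfrac{1}{\lambda}(\xi|\eta)_o - K \leq (\bdry f(\xi)|\bdry f(\eta))_{o'} \leq \lambda (\xi|\eta)_o + K
\]
for a constant $K=K(\lambda,C,\delta,\delta')$.

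Finally I would translate these Gromov product inequalities into the quasisymmetry condition. Given distinct $\xi,\eta,\zeta\in\p X$, the ratio bound
\[
\frac{d_{a'}(\bdry f(\xi),\bdry f(\eta))}{d_{a'}(\bdry f(\xi),\bdry f(\zeta))} \leq M\, a^{-(1/\lambda)(\xi|\eta)_o + \lambda (\xi|\zeta)_o + 2K}
\]
follows from the exponential form of the visual metric, and rewriting the right side as a power of $d_a(\xi,\eta)/d_a(\xi,\zeta)$ (together with a bound using the fact that this ratio cannot exceed a fixed constant when $(\xi|\eta)_o \leq (\xi|\zeta)_o$) produces a distortion function of the form $\phi(t)=M'\max(t^{1/\lambda},t^{\lambda})$, which is the required homeomorphism of $[0,\infty)$.

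The main obstacle is the Gromov product comparison in the third step: it requires careful bookkeeping of the additive errors coming from $\delta$--hyperbolicity, the Morse lemma constants, and the quasi-isometry constants, and it uses in an essential way the geometric interpretation of the Gromov product as a distance to a geodesic, which is itself an approximate identity in hyperbolic spaces. Once that comparison is in hand, the passage to quasisymmetry is a routine manipulation of the exponential visual metric.
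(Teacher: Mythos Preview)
The paper does not prove this theorem; it is quoted as background from the literature (Bourdon \cite{bourdon-flot} and Buyalo--Schroeder \cite{buyaloschroeder}) with no accompanying argument. Your sketch is essentially the standard proof found in those references: define $\bdry f$ via the Morse lemma, establish the two-sided Gromov product inequality
\[
\lambda^{-1}(\xi|\eta)_o - K \leq (\bdry f(\xi)\,|\,\bdry f(\eta))_{o'} \leq \lambda(\xi|\eta)_o + K,
\]
and convert this into a power-type quasisymmetry distortion via the exponential form of the visual metric.

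One minor point worth cleaning up: in your final displayed inequality you mix the parameters $a$ and $a'$ --- the right-hand side should involve $a'$, and the passage to a distortion function $\phi$ in the variable $t=d_a(\xi,\eta)/d_a(\xi,\zeta)$ requires absorbing the change of base $a'\leftrightarrow a$ into the exponent, which alters the power but not the structure of the argument. With that bookkeeping fixed, the proposal is a correct outline of the classical proof.
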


    The theorem above implies that the quasisymmetry class of the boundary of a proper geodesic $\delta$--hyperbolic space equipped with a visual metric is a quasi-isometry invariant of the $\delta$--hyperbolic space. This result yields the following quasisymmetry invariant introduced by Pansu~\cite{pansu}. 

    \begin{defn}
      Let $(Z,d)$ be a metric space.
      The \emph{conformal dimension} of $Z$ is the infimal Hausdorff dimension of any metric space $(Z',d')$ quasisymmetric to $(Z,d)$.
    \end{defn}

    See Mackay and Tyson \cite{mackaytyson} for additional background on conformal dimension. Note that the conformal dimension of the boundary of a hyperbolic group equipped with a visual metric is finite, see \cite[Theorem~3.4.4]{mackaytyson}.

    \subsection{Pontryagin spheres}

    The Pontryagin sphere is, roughly speaking, constructed by starting with a closed orientable surface and repeatedly replacing the interior of an embedded disk with a once punctured torus. In particular, it is an inverse limit of surfaces, where the maps between surfaces can be realized as collapsing disjoint handles to disks. More formally, we will use the following equivalent description of Daverman and Thickstun \cite{davermanthickstun}. Here a \emph{figure-eight} is a space homeomorphic to the wedge of two circles.
    
    \begin{defn}
    Suppose $\cP$ is a compact, connected, locally path-connected, separable, and metrizable space. Then $\cP$ is a \emph{Pontryagin sphere} if there exists a countable family $\mathcal{E}$ of pairwise-disjoint figure-eights in $\cP$ such that $\mathcal{E}$ is null and dense in $\cP$ and, for any cofinite subfamily $\mathcal{D}$ of $\mathcal{E}$, the image of $\cP$ under the decomposition map $\cP \to \cP/\mathcal{D}$ is a closed orientable surface.
    \end{defn}

    Jakobsche \cite{MR1113561} showed that the Pontryagin sphere is homogeneous and unique up to homeomorphism. Fischer \cite{fischer} showed that if $L$ is a flag triangulation of a closed, orientable surface of genus $\ge 1$, then the visual boundary of the associated Davis complex is homeomorphic to $\cP$.
    \'Swi\k{a}tkowski \cite{swiatkowski-trees} proved the same statement for the visual boundaries of the universal covers of locally CAT$(-1)$ $3$--dimensional pseudo-manifolds (that are not manifolds).

\section{Conformal dimension and virtual algebraic
  fibering}\label{sec:fibering}
% \section{Conformal dimension and virtual algebraic fibering}\label{sec:fibering}

  The aim of this section is to prove an enhancement of a result of Lafont, Minemyer, Sorcar, Stover, and Wells (LMSSW) on virtual fibering of right-angled Coxeter groups \cite[Main Theorem]{LMSSW-fibering}. We are able to quasi-isometrically embed Bourdon's right-angled Fuchsian buildings in a family of examples, and hence we can use his conformal dimension computations in this case. 
  In later sections, we will prove lower bounds on conformal dimension for examples which do not necessarily contain quasi-isometrically embedded right-angled buildings.

%  employ Bourdon's computation of conformal dimension of boundaries of hyperbolic right-angled buildings to distinguish infinite quasi-isometry classes amongst these examples.
  
    \begin{thm} \label{thm:QI_of_fibering_examples}
        For every $n \geq 2$ there exist infinitely many quasi-isometry classes of hyperbolic right-angled Coxeter groups that virtually algebraically fiber and have virtual cohomological dimension~$n$. 
    \end{thm}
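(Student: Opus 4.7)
The plan is to construct, for each $n \geq 2$, a sequence of hyperbolic right-angled Coxeter groups $W_{\Gamma_i}$, each of virtual cohomological dimension $n$ and each virtually algebraically fibering, whose boundary conformal dimensions tend to infinity. Since conformal dimension of the boundary is a quasi-isometry invariant of a hyperbolic group by \Cref{thm_qs}, this produces infinitely many quasi-isometry classes. The starting point is the Lafont--Minemyer--Sorcar--Stover--Wells construction, which already supplies graphs giving vcd $n$ and virtual algebraic fibering; I would generalize it so that the defining graphs $\Gamma_i$ contain, as induced subgraphs, a sequence $L_i$ of graphs (for example, Levi incidence graphs of finite generalized polygons of increasing order) whose right-angled Coxeter groups $W_{L_i}$ are quasi-isometric to Bourdon's right-angled Fuchsian buildings.

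First I would extract from the LMSSW argument a graph-theoretic template: a list of conditions on a finite simplicial graph $\Gamma$ that guarantee (i) absence of induced four-cycles, so that $W_\Gamma$ is hyperbolic by \Cref{lemma:RACG_props}(1), (ii) the correct topology of the nerve (flag complex on $\Gamma$) to give vcd equal to $n$, and (iii) an orbit-coloring condition in the style of Bestvina--Brady/Jankiewicz--Norin--Wise that implies virtual algebraic fibering. Then I would exhibit, for each $i$, a graph $\Gamma_i$ satisfying these three conditions and containing $L_i$ as an induced subgraph. Because the graphs $L_i$ are themselves flag-no-square, the real work is in engineering a ``connecting region'' between $L_i$ and the remainder of $\Gamma_i$ that introduces no new induced squares, does not alter the nerve topology needed for vcd equal to $n$, and extends the fibering coloring over the new vertices.

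Given such $\Gamma_i$, the lower bound on conformal dimension follows formally. By \Cref{lemma:RACG_props}(2), $X_{L_i}$ sits as a convex, hence quasi-isometrically embedded, subcomplex of $X_{\Gamma_i}$. Since both complexes are proper geodesic and hyperbolic, \Cref{thm_qs} extends the inclusion to a quasisymmetric embedding of Gromov boundaries $\bdry W_{L_i} \hookrightarrow \bdry W_{\Gamma_i}$. Conformal dimension is monotone under quasisymmetric embeddings of metric spaces, so
\[
\Confdim(\bdry W_{\Gamma_i}) \;\geq\; \Confdim(\bdry W_{L_i}).
\]
Bourdon's computation of conformal dimension for his Fuchsian buildings \cite{bourdon95} then gives a lower bound on the right-hand side that tends to infinity as the order of the generalized polygon defining $L_i$ grows, yielding the required sequence of distinct quasi-isometry classes.

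The main obstacle will be the simultaneous preservation of all four competing constraints: no induced four-cycle, correct nerve topology for vcd equal to $n$, existence of an orbit-coloring certifying virtual algebraic fibering, and presence of the induced subgraph $L_i$ realizing a Bourdon building. I expect this to require a careful cone- or suspension-type attachment along a small portion of $L_i$, together with an explicit verification that the LMSSW fibering criterion remains satisfied after this modification. Once the attaching construction is carried out successfully, the quasi-isometric classification conclusion is automatic from the conformal dimension lower bound above.
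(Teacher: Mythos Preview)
Your endgame matches the paper's: find a convex special subgroup that is a uniform lattice in a Fuchsian building of growing parameter, and use monotonicity of conformal dimension under the induced quasisymmetric embedding of boundaries. But the paper reaches this endgame by a genuinely different mechanism that completely sidesteps the ``main obstacle'' you identify, and that obstacle is real rather than merely cosmetic.

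Your plan keeps the Jankiewicz--Norin--Wise fibering criterion and tries to graft ever-larger Levi graphs $L_i$ into the LMSSW graphs while simultaneously preserving flag-no-square, the nerve topology controlling vcd, and a compatible JNW coloring. You correctly flag that this simultaneous engineering is not carried out; there is no evident reason it should succeed, since LMSSW choose their multiplicity $\mu_n$ delicately precisely to make the JNW game winnable, and perturbing the graph is liable to destroy that.

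The paper instead stays entirely inside the LMSSW thickening framework and \emph{varies the multiplicity}. The key structural observation is that $W_{\mathrm{Th}^\mu(X)}$ is the graph product over $\mathrm{Th}^1(X)$ with vertex groups $(\mathbb{Z}/2)^{\mu(v)}$, hence a uniform lattice in a right-angled building whose underlying Coxeter group is $W_{\mathrm{Th}^1(X)}$. By \cite{DavDymJan10} the vcd of the lattice equals that of the underlying Coxeter group, so vcd is \emph{independent of $\mu$}; your constraint (ii) is then automatic for $\mu\equiv N$ and all $N$. For fibering the paper abandons JNW altogether and invokes Kielak's theorem: the groups are virtually RFRS, and $b_1^{(2)}=0$ holds once the thickness is large, by the Davis--Dymara--Januszkiewicz--Okun identification of the lattice's $L^2$-cohomology with $H^*(W,\mathbb{R}W)$ together with one-endedness (preserved under thickening). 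Finally, the Bourdon-building subgroup is already present without any grafting: Dirac's theorem supplies a full cycle of length $\ell\geq 5$ in the $5$-large, one-ended $X_n$, and $\mathrm{Th}^N$ turns this cycle into the graph product on an $\ell$-gon with vertex groups $(\mathbb{Z}/2)^N$, a Bourdon lattice whose boundary conformal dimension tends to infinity with $N$. In short, you propose to vary the combinatorics and preserve JNW; the paper fixes the combinatorics, varies the thickness, and replaces JNW by Kielak, which eliminates all ad hoc gluing.
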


    \begin{proof}
      Osajda \cite{Osa13} used a `thickening' operation that takes as input a finite cube complex $X$ and outputs a flag simplicial complex $\mathrm{Th}^1(X)$ with the same vertex set, such that two vertices are joined by an edge if and only if they are contained in a common cube of $X$.
      LMSSW \cite{LMSSW-fibering} generalized this operation by taking as input a finite cube complex $X$ together with a multiplicity function $\mu\from VX\to \mathbb{N}$ and outputting the simplicial complex $\mathrm{Th}^\mu(X)$ whose vertex set is  $\{(v,1),\dots,(v,\mu(v))\mid v\in VX\}$, with a simplex defined by a set of vertices whenever their first coordinates belong to a common cube in $X$.
           Observe that $\mathrm{Th}^1(X)$ is precisely $\mathrm{Th}^\mu(X)$ for $\mu\equiv 1$. 
           We use $\mathrm{Th}^N$ when $\mu\equiv N$.
    It is shown in \cite{LMSSW-fibering}  that if $X$ is 5--large, then so is $\mathrm{Th}^\mu(X)$. 

      A sequence of spaces is constructed by taking $X_1$ to be, say,  a pentagon and, given $X_n$, choosing a $\mu_n$ and taking  $G_{n+1}:=W_{\mathrm{Th}^{\mu_n}(X_n)}$ and $X_{n+1}$ to be a quotient of the Davis complex $\Sigma_{\mathrm{Th}^{\mu_n}(X_n)}$ by a suitably chosen torsion-free finite-index subgroup of $G_{n+1}$. 
      Then repeat.
      The persistence of 5--largeness implies that the $G_n$ are one-ended hyperbolic groups.
      LMSSW choose their $\mu_n$ carefully, depending on the geometry of $X_n$, for purposes to be mentioned shortly, and compute $vcd(G_n)=n$.

We observe that for any $X$ and $\mu$, one can view $W_{{\mathrm{Th}^\mu(X)}}$ as a graph product of finite groups with underlying graph $\mathrm{Th}^1(X)$ and so that the vertex group for $v\in V\mathrm{Th}^1(X)=VX$ is $(\mathbb{Z}/2)^{\mu(v)}$.
It follows that $W_{{\mathrm{Th}^\mu(X)}}$ is a lattice in the automorphism group of a building $\Phi^\mu(X)$ whose underlying Coxeter group is $W_{\mathrm{Th}^1(X)}$ \cite[Theorem~5.1]{Dav98}, and whose thickness is at least $2^{\min(\mu)}$.
The main result of \cite{DavDymJan10} implies the vcd of a lattice in the building is the same as that of the underlying Coxeter group.
Therefore, $vcd(W_{{\mathrm{Th}^\mu(X)}})=vcd(W_{\mathrm{Th}^1(X)})$, independent of $\mu$.
Since LMSSW computed $vcd(W_{{\mathrm{Th}^{\mu_n}(X_n)}})=n+1$, we conclude $vcd(W_{{\mathrm{Th}^\mu(X_n)}})=vcd(W_{{\mathrm{Th}^1(X_n)}})=vcd(W_{{\mathrm{Th}^{\mu_n}(X_n)}})=n+1$, independent of $\mu$.

LMSSW deduce the existence of virtual algebraic fiberings by choosing $\mu_n$ carefully at each step and playing the  Jankiewicz-Norin-Wise game \cite{JanNorWis21}.
We want to vary $\mu$, so this likely will not work for us. 
Instead, we use a theorem of Kielak combined with a computation of the first $L^2$-Betti number of graph products of large finite groups.

Davis--Dymara--Januszkiewicz--Okun \cite{ddjo} compute the $L^2$-cohomology of a lattice $\Gamma$ in a building $\Phi$ in terms of a weighted $L^2$-cohomology theory of the underlying Coxeter group.
For large weights (depending on the radius of convergence of the growth series of $W$), the weighted $L^2$-cohomology of the Davis complex $\Sigma$ can essentially be identified with the cohomology of compact support $H^*_c(\Sigma)$,  hence with the group cohomology $H^\ast(W,\mathbb{R} W)$, see \cite[Theorem 20.7.6]{davis}.
In particular, if the thickness of the building is sufficiently large, and $H^m(W, \mathbb{R} W) = 0$, then the $m^{th}$ $L^2$-cohomology of the lattice vanishes. 

Davis computed $H^*(W, \mathbb{R} W)$ for arbitrary finitely generated Coxeter groups:
$$H^*(W, \mathbb{R} W) = \bigoplus_{T \in \mathcal{S}} \mathbb{Z} W_T \otimes H^{i-1}(L - \sigma_T, \mathbb{R})$$
where $\sigma_T$ is the simplex of the link $L$ of $W$ corresponding to the spherical subset $T$. 
In particular, $H^1(W, \mathbb{R} W) = 0$ if and only if $L$ is not separated by a simplex (which is equivalent to $W$ being one-ended).
Therefore, $b_1^{(2)}(W) = 0$ if the building is sufficiently thick and $W$ is one-ended. In \cite[Corollary 4.4]{LMSSW-fibering}, it is shown that being one-ended is preserved under the Osajda procedure and thickening, so we can assume this as well.
Now, since $W$ is virtually RFRS, we can use the following theorem of Kielak to show these groups virtually algebraically fiber for large enough thickness:

\begin{reftheorem*}[{Kielak's fibering theorem \cite[Theorem~5.3]{Kie20}}]
  Suppose that $G$ is a virtually RFRS group and $b_1^{(2)}(G) = 0$.
  Then $G$ has a finite-index subgroup $H$ that algebraically fibers. 
\end{reftheorem*}

Thus for each $n$, whenever $\min(\mu)$ is sufficiently large,  $W_{\mathrm{Th}^\mu(X_n)}$ virtually algebraically fibers.

It remains to show that for fixed $n$ there are infinitely many quasi-isometry types of groups 
$W_{\mathrm{Th}^\mu(X_n)}$ with $\min(\mu)$ bounded below. 
A well-known result in graph theory \cite{dirac} implies that a non-complete graph with no separating clique contains a full cycle of length $\ell\geq 4$.
Since $X_n$ is 5--large and defines a one-ended Coxeter group, its 1--skeleton contains a full cycle of length $\ell\geq 5$.
Therefore, $W_{\mathrm{Th}^N(X_n)}$ contains a special subgroup that is a graph product defined on a cycle of length $\ell\geq 5$ with vertex groups $(\mathbb{Z}/2)^N$.
Such a group is a lattice in a Bourdon building \cite{bourdon,caprace}, and for fixed $\ell$ the conformal dimension of the boundary of the building goes to infinity with $N$.
Since special subgroups are convex, this gives a lower bound on the conformal dimension of the boundary of 
$W_{\mathrm{Th}^N(X_n)}$ that increases to~$\infty$ with $N$.
Thus, for fixed $n$ and variable $N$, there are infinitely many quasi-isometry classes of group $W_{\mathrm{Th}^N(X_n)}$, all of which have virtual cohomological dimension $n+1$, and all of which virtually algebraically fiber once $N$ is large enough. 
    \end{proof}

\section{Building round trees in a Davis complex}\label{sec:cdim_lower_bounds}
% \section{Conformal dimension lower bounds}\label{sec:cdim_lower_bounds}

The idea of a round tree is to take a rooted regular tree $\mathbb T$ with basepoint $x_0$ and rotate the tree about $x_0$ to construct a space that is topologically $\bigl(\mathbb T\times\mathbb{S}^1\bigr)/\bigl(x_0\times \mathbb{S}^1\bigr)$. 
Metrically, one fixes a parameter $\kappa<0$ and asks that for each geodesic ray $\alpha$ in $\mathbb T$ based at $x_0$, the ``sheet'' $\bigl(\alpha \times\mathbb{S}^1\bigr)/\bigl(x_0\times \mathbb{S}^1\bigr)$ is isometric to the plane of constant curvature $\kappa$. 
If a round tree quasi-isometrically embeds in a hyperbolic space, then lower bounds for the conformal dimension of the boundary of the space follow from data of the round tree. 
Actually, similar estimates can be done with only a sector of a round tree, and the regularity and non-positive curvature conditions can be combinatorialized. We follow Mackay \cite{mackay,mackay16} in describing such an object as a ``combinatorial round tree.''

%%%%%%%%%%%%%%%%%%%%%%%%%%%%%%%%%%%%%%%%%%%%%%%%%%%%%%%%
    \subsection{Combinatorial round trees}
%%%%%%%%%%%%%%%%%%%%%%%%%%%%%%%%%%%%%%%%%%%%%%%%%%%%%%%%

    \begin{defn}[Combinatorial round tree] \cite[Definition 7.1]{mackay16} \label{defn:RT}  
        A polygonal $2$-complex $A$ is a \emph{combinatorial round tree} with vertical branching $V \in \N$ and horizontal branching at most $H \in \N$ if the following holds. Let $T = \{1,2, \ldots, V\}$. The space $A$ can be described either as an increasing nested union of finite sub-(round tree) subcomplexes $A_n$, the \emph{round tree at step $n$},  or as a union of \emph{sheets} $F_{\bt}$: 
            \[ A = \bigcup_{n\in\N\cup\{0\}}A_n = \bigcup_{\bt \in T^{\N}} F_{\bt} \]
    In addition, the following hold:
    \begin{enumerate}[1. ]
      \item $A_0$ is homeomorphic to a disk. 
            \item $A$ has a basepoint $x_0$ contained in the boundary of $A_0$ and not contained in the closure of $A_n\setminus A_0$ for any $n\in \N$. 
            \item Each sheet $F_{\bt}$ is an infinite planar $2$-complex homeomorphic to a half-plane whose boundary is the union of two rays $L_{\bt}$ and $R_{\bt}$ with $L_{\bt} \cap R_{\bt} = \{x_0\}$.            
            \item For each $n\in\N$, the round tree at step $n$ satisfies $A_n = \St(A_{n-1})$. Given $\bt = (t_1, t_2, \ldots) \in T^\N$, let $\bt_n = (t_1, \ldots, t_n) \in T^n$.
              If $\bt,\bt' \in T^\N$ satisfy $\bt_n = \bt'_n$ and $\bt_{n+1} \neq \bt'_{n+1}$, then: 
                \[ A_n \cap F_{\bt} \subseteq F_{\bt'} \cap F_{\bt} \subseteq A_{n+1} \cap F_{\bt}\] \label{item:separation_of_sheets}
            \item Each $2$-cell $P \subseteq A_n$ meets at most $VH$ $2$-cells in $A_{n+1} \setminus A_n$.    
        \end{enumerate}
    \end{defn}
    Compared to the round tree, the set $T^{\N}$ plays the role of the tree. The sheet $F_{\bt}$ corresponding to $\bt\in T^{\N}$ will, in practice, be quasi-isometric to a sector in the hyperbolic plane.
    The $A_n$ are concentric balls about $A_0$ in the face metric.
    For $\bt$ and $\bt'$ that agree up to coordinate $n$ and differ at
    coordinate $n+1$, the sheets $F_{\bt}$ and $F_{\bt'}$ agree in
    $A_n$ and diverge by the time they leave $A_{n+1}$. Our \Cref{const:round_tree} will not use the full flexibility allowed by \eqref{item:separation_of_sheets}; we will have that $A_{n+1}\setminus A_n$ is a union of distinct width--1 strips, so $F_{\bt}$ and $F_{\bt'}$ diverge immediately upon leaving $A_n$.
    
     If
$\bt=(t_1,\dots,t_i)\in T^i$ and $t_{i+1}\in T$ we write $(\bt,t_{i+1})$ to mean
$(t_1,\dots,t_i,t_{i+1})\in T^{i+1}$.

A combinatorial round tree can be described inductively, as explained by Mackay \cite{mackay16}.
    For each $n \in \N$ the complex $A_n=\bigcup_{\bt\in T^n}D_{\bt}$,
    where $D_{\bt}:=A_n\cap F_{\bt}$ and  $F_{\bt}$ is the common
    intersection of all $F_{\bar{\bt}}$ such that $\bt$ is a prefix of
    $\bar{\bt}\in T^{\N}$.
    Each $D_{\bt}$ is a planar 2--complex homeomorphic to a disk, and these are glued together along their initial subcomplexes in a branching fashion.
    The boundary of $D_{\bt}$ decomposes as a union of three paths
    $L_{\bt}$, $R_{\bt}$ and $E_{\bt}$, where for any $\bar{\bt}\in
    T^{\N}$ with prefix $\bt$ we have $L_{\bt}:=L_{\bar{\bt}}\cap A_n$ and $R_{\bt}:=R_{\bar{\bt}}\cap A_n$. 
    The complex $A_{n+1}$ is built from $A_n$ by taking each $\bt$ of
    length $n$ and  attaching $2$--cells in $V$--many strips
    along the outer boundary path $E_{\bt}$ of $D_{\bt}$.
    The union of $D_{\bt}$ with each one of these strips form the new disks $D_{(\bt,t')}$, for $(\bt,t')\in T^{n+1}$. 
    Each $2$--cell in $D_{\bt}$ meets at most $H$--many new $2$--cells in each $D_{(\bt,t')}$.

    \begin{thm} \cite[Theorem 7.2]{mackay16} \label{thm:mackayRT}
        Let $X$ be a hyperbolic polygonal $2$-complex, and let $A$ be
        a combinatorial round tree with vertical branching $V \geq 2$
        and horizontal branching $H \geq 2$. Suppose that $A^{(1)}$,
        with the natural length metric giving each edge length one, quasi-isometrically embeds into $X$. Then:
            \[ \Confdim(\p X) \geq 1 + \frac{\log V}{\log H}\]
    \end{thm}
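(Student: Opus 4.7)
The plan is to transfer the conformal dimension estimate from $\p X$ to the boundary $\p A^{(1)}$ of the round tree via the quasi-isometric embedding, and then to bound $\Confdim(\p A^{(1)})$ from below by a combinatorial modulus argument tailored to the prescribed branching parameters.

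First, because $A$ is locally finite, $A^{(1)}$ with its length metric is a proper geodesic space, and it inherits Gromov hyperbolicity from its QI embedding into the hyperbolic space $X$. By \Cref{thm_qs} combined with the standard fact that a QI embedding between proper geodesic hyperbolic spaces induces a quasisymmetric embedding of Gromov boundaries, the embedding $f\from A^{(1)}\to X$ gives a quasisymmetric embedding $\p f\from \p A^{(1)}\hookrightarrow \p X$. Since conformal dimension is monotone under quasisymmetric embeddings onto subspaces, it suffices to prove $\Confdim(\p A^{(1)})\geq 1+\log V/\log H$. The boundary $\p A^{(1)}$ has a tractable structure: for each $\bt\in T^{\N}$, the sheet $F_{\bt}$ is quasi-isometric to a hyperbolic half-plane and contributes an arc $\gamma_{\bt}\subset\p A^{(1)}$ joining the ideal endpoints $L_{\bt}(\infty)$ and $R_{\bt}(\infty)$; two such arcs $\gamma_{\bt}$ and $\gamma_{\bt'}$ coincide up to scale $n$ precisely when $\bt_n=\bt'_n$.

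Next, I would apply the combinatorial modulus criterion of Keith--Kleiner, Bourdon--Kleiner, and Mackay to the curve family $\cF:=\{\gamma_{\bt}:\bt\in T^{\N}\}$. At combinatorial scale $n$, the depth-$n$ $2$--cells of $A$ induce a shingling of $\p A^{(1)}$ in which the $V^n$ disks $D_{\bt}$ for $\bt\in T^n$ partition cells into $V^n$ groups; the horizontal branching bound implies that each curve $\gamma_{\bt}$ meets at most $H^n$ depth-$n$ cells inside $D_{\bt}$. For the critical exponent $Q:=1+\log V/\log H$, a H\"older-type lower bound on the combinatorial $Q$--modulus is obtained recursively over the prefix tree $\bigcup_{n}T^n$: at each branching step the number of curves multiplies by $V$ while the minimum $Q$--mass cost of an admissible weight in a single new strip inflates by $1/H^{Q-1}=1/V$, and the two factors cancel to give a uniform lower bound on $\mathrm{Mod}_Q^n(\cF)$ independent of $n$.

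The standard implication from uniformly positive combinatorial $Q$--modulus to conformal dimension (cf.\ \cite{mackay16} and references therein) then yields $\Confdim(\p A^{(1)})\geq Q = 1+\log V/\log H$, which combined with the first step proves the theorem. The main obstacle is the precise bookkeeping of modulus across branching levels: because curves with a common prefix share initial cells, the contributions of distinct $\gamma_{\bt}$ are not independent, so the H\"older estimate must be organized recursively on the tree of prefixes. The exponent $Q$ is forced exactly by the balance equation $H^{Q-1}=V$, which expresses when $V$--fold vertical branching is counterbalanced by the $H$--fold horizontal inflation in the number of cells each curve must traverse.
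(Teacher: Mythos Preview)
The paper does not supply its own proof of this statement: \Cref{thm:mackayRT} is quoted verbatim from Mackay \cite[Theorem~7.2]{mackay16} and used as a black box in the proof of \Cref{thm:confdim_lower_bounds}. So there is no ``paper's proof'' to compare against; your proposal is really a sketch of Mackay's argument.

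As a sketch of that argument, your outline is in the right spirit---the lower bound does come from the curve family $\{\gamma_{\bt}\}$ in the boundary together with the branching bookkeeping---but one step is not correct as stated. You write that ``conformal dimension is monotone under quasisymmetric embeddings onto subspaces,'' and use this to reduce to bounding $\Confdim(\p A^{(1)})$. In general, conformal dimension is \emph{not} monotone under passing to subspaces: a subset of a metric space can have strictly larger conformal dimension than the ambient space, because a quasisymmetric deformation of the ambient space need not restrict to an efficient deformation of the subset. What Mackay actually does is closer to your second and third paragraphs than your first: he pushes the curve family $\{\gamma_{\bt}\}$ forward into $\p X$ via the boundary map and applies the lower-bound criterion (a diffuse measure on a curve family, in the style of Bourdon and Pansu) directly in $\p X$. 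The branching parameters $V$ and $H$ control, respectively, the size of the curve family and the visual diameter of the arcs at each depth, and the exponent $Q$ with $H^{Q-1}=V$ falls out exactly as you say. So the fix is small: drop the monotonicity claim and run the modulus/measure argument in $\p X$ rather than in $\p A^{(1)}$.
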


%%%%%%%%%%%%%%%%%%%%%%%%%%%%%%%%%%%%%%%%%%%%%%%%%%%%%%%%
    \subsection{Conditions on the defining graph}
%%%%%%%%%%%%%%%%%%%%%%%%%%%%%%%%%%%%%%%%%%%%%%%%%%%%%%%%

    We specify a local condition, \emph{$(n,m)$--branching}, on the presentation graph~$\Gamma$ of
    a right-angled Coxeter group $W_\Gamma$, which  ensures that a
    combinatorial round tree, with vertical and horizontal
    branching parameters depending on $n$ and $m$, respectively, can be constructed in the Davis complex for $W_{\Gamma}$. The condition guarantees that the links in the Davis complex contain enough $m$--cycles with small overlap so that pieces of a quasi-isometrically embedded hyperbolic plane can be constructed from squares in the complex. See \Cref{figure:n6branching} and \Cref{figure:round_tree}. 

    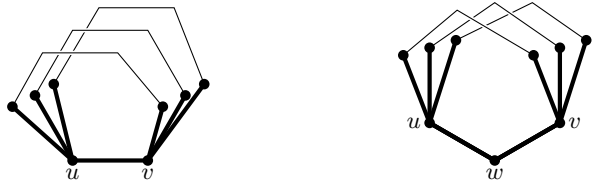
\begin{figure}
      \centering
       \begin{tikzpicture}\small
            % hindmost hexagon
            \coordinate[label={[label distance=0pt] -90:$$}] (ua) at (-120:1);
            \coordinate[label={[label distance=0pt] -90:$$}] (va) at (-60:1);
            \coordinate[label={[label distance=0pt] 180:$$}] (uia) at ($(180:1)+(.25,.15)$);
            \coordinate[label={[label distance=0pt] 0:$$}] (vja) at ($(0:1)+(.25,.15)$);
            \coordinate[label={[label distance=0pt] 0:$$}] (ijka) at ($(60:1)+(.35,.3)$);
            \coordinate[label={[label distance=0pt] 180:$$}] (ka) at ($(120:1)+(.35,.3)$);
            \draw (vja)--(ijka)--(ka)--(uia);
            \draw[ultra thick] (uia)--(ua) (va)--(vja);
            \fill  (uia) circle (2pt) (vja) circle (2pt);

            % middle hexagon
            \coordinate[label={[label distance=0pt] -90:$u$}] (u) at (-120:1);
            \coordinate[label={[label distance=0pt] -90:$v$}] (v) at (-60:1);
            \coordinate[label={[label distance=0pt] 180:$$}] (ui) at (180:1);
            \coordinate[label={[label distance=0pt] 0:$$}] (vj) at (0:1);
            \coordinate[label={[label distance=0pt] 0:$$}] (ijk) at (60:1);
            \coordinate[label={[label distance=0pt] 180:$$}] (k) at (120:1);
            \draw[ultra thick,white] (vj)--(ijk)--(k)--(ui);
            \draw (vj)--(ijk)--(k)--(ui);
            \draw[ultra thick] (ui)--(u)--(v)--(vj);
            \fill (u) circle (2pt) (v) circle (2pt) (ui) circle (2pt)
            (vj) circle (2pt);

            % foremost hexagon
             \coordinate[label={[label distance=0pt] -90:$$}] (ub) at (-120:1);
            \coordinate[label={[label distance=0pt] -90:$$}] (vb) at (-60:1);
            \coordinate[label={[label distance=0pt] 180:$$}] (uib) at ($(180:1)+(-.3,-.15)$);
            \coordinate[label={[label distance=0pt] 0:$$}] (vjb) at ($(0:1)+(-.3,-.15)$);
            \coordinate[label={[label distance=0pt] 0:$$}] (ijkb) at ($(60:1)+(-.4,-.3)$);
            \coordinate[label={[label distance=0pt] 180:$$}] (kb) at ($(120:1)+(-.4,-.3)$);
            \draw[ultra thick, white] (vjb)--(ijkb)--(kb)--(uib);
            \draw (vjb)--(ijkb)--(kb)--(uib);
            \draw[ultra thick] (uib)--(ub) (vb)--(vjb);
            \fill  (uib) circle (2pt) (vjb) circle (2pt);
          \end{tikzpicture}
          \hspace{2cm}
          \begin{tikzpicture}\small
            % hindmost hexagon
            \coordinate[label={[label distance=-1pt] 180:$$}] (ua) at (-150:1);
            \coordinate[label={[label distance=-1pt] -90:$$}] (wa) at (-90:1);
            \coordinate[label={[label distance=-1pt] 0:$$}] (va) at (-30:1);
            \coordinate[label={[label distance=-1pt] 0:$$}] (vja) at ($(30:1)+(.35,.1)$);
            \coordinate[label={[label distance=-1pt] 90:$$}] (ijka) at ($(90:1)+(.5,.1)$);
            \coordinate[label={[label distance=-1pt] 180:$$}] (uia) at ($(150:1)+(.35,.1)$);
            \draw (uia)--(ijka)--(vja);
            \draw [ultra thick] (uia)--(ua)--(wa)--(va)--(vja);
            \fill (uia) circle (2pt) (vja) circle (2pt);
             % middle hexagon
            \coordinate[label={[label distance=0pt] 180:$u$}] (u) at (-150:1);
            \coordinate[label={[label distance=0pt] -90:$w$}] (w) at (-90:1);
            \coordinate[label={[label distance=0pt] 0:$v$}] (v) at (-30:1);
            \coordinate[label={[label distance=-1pt] 0:$$}] (vj) at (30:1);
            \coordinate[label={[label distance=-1pt] 90:$$}] (ijk) at ($(90:1)+(0,.1)$);
            \coordinate[label={[label distance=-1pt] 180:$$}] (ui) at (150:1);
            \draw[ultra thick, white] (ui)--(ijk)--(vj);
            \draw (ui)--(ijk)--(vj);
            \draw [ultra thick] (ui)--(u)--(w)--(v)--(vj);
            \fill (ui) circle (2pt) (u) circle (2pt) (w) circle (2pt)
            (v) circle (2pt) (vj) circle (2pt);
            % foremost hexagon
            \coordinate[label={[label distance=-1pt] 180:$$}] (ub) at (-150:1);
            \coordinate[label={[label distance=-1pt] -90:$$}] (wb) at (-90:1);
            \coordinate[label={[label distance=-1pt] 0:$$}] (vb) at (-30:1);
            \coordinate[label={[label distance=-1pt] 0:$$}] (vjb) at ($(30:1)+(-.35,-.1)$);
            \coordinate[label={[label distance=-1pt] 90:$$}] (ijkb) at ($(90:1)+(-.5,0)$);
            \coordinate[label={[label distance=-1pt] 180:$$}] (uib) at ($(150:1)+(-.35,-.1)$);
            \draw[ultra thick, white] (uib)--(ijkb)--(vjb);
            \draw (uib)--(ijkb)--(vjb);
            \draw [ultra thick] (uib)--(ub)--(wb)--(vb)--(vjb);
            \fill (uib) circle (2pt) (vjb) circle (2pt);
          \end{tikzpicture}
	\caption{An illustration of the $(3,6)$--branching condition. The condition ensures that certain subgraphs, drawn with thickened edges, can be extended to contain induced 6-cycles that intersect only in the original thickened subgraph.}
	\label{figure:n6branching}
\figurealttext{Two diagrams depicting the requirements of the
  $(3,6)$-branching condition. The first shows three hexagonal 2-cells
  sharing a single edge. The second shows three hexagonal 2-cells sharing two adjacent edges.} 
    \end{figure}   

    \begin{defn} \label{defn:n6_branching}
For $n\geq 1$ and $m\geq 5$,  a finite simplicial graph $\G$ has \emph{$(n,m)$--branching} if every vertex has valence at least $n+1$ and
for every induced path $P$ of length 1 or 2 with endpoints $u$ and
$v$, the following conditions are satisfied.
         For every set
          $\{u_1,\dots,u_n\}$ of distinct vertices in
          $\Lk(u)\setminus P$ there exists a set
          $\{v_1,\dots,v_n\}$ of distinct vertices in
          $\Lk(v)\setminus P$ and cycle subgraphs $H_i$ such that
          for all $i\neq j$:
          \begin{itemize}
          \item $5\leq |H_i|\leq m$
          \item $H_i$ contains $\{u_i,v_i\}\cup P$.
          \item $H_i\cap H_j=P$
            \item $\bigcup_iH_i$ is an induced subgraph of $\Gamma$. 
          \end{itemize}
        \end{defn}

        We first derive some basic consequences of this definition and
        use them to show that a right-angled Coxeter group whose presentation graph has
        $(n,m)$--branching is hyperbolic and to deduce the topological
        type of its boundary.
        Then we estimate the conformal 
        dimension of the boundary. 
        Note that $(n,m)$--branching implies $(n',m')$--branching for any $n'\leq n$ and $m'\geq m$. 

      \begin{lemma}\label{lem:branching_implies_high_girth}
       A graph with  $(1,m)$--branching has girth at least 5.
      \end{lemma}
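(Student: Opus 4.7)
The plan is to rule out cycles of length 3 and length 4 separately, by applying the $(1,m)$--branching property to suitably chosen induced paths and deriving a contradiction with the requirement that the produced cycle subgraph $H_1$ be induced in $\Gamma$ and of length at least $5$.

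For triangles: suppose for contradiction that $u, v, w$ form a triangle in $\Gamma$. Let $P$ be the induced path of length $1$ given by the edge $u \edge v$; then $P \cap \Lk(u) = \{v\}$, so $w \in \Lk(u) \setminus P$. Apply $(1,m)$--branching with the singleton $\{w\} \subseteq \Lk(u) \setminus P$: this yields some $v_1 \in \Lk(v) \setminus P$ and a cycle subgraph $H_1$ with $5 \leq |H_1| \leq m$ whose vertex set contains $\{u, v, w, v_1\}$, and such that $H_1 = \bigcup_i H_i$ is induced in $\Gamma$. Because $u, v, w$ are pairwise adjacent in $\Gamma$ and $H_1$ is induced, all three edges $u \edge v$, $v \edge w$, $u \edge w$ appear in $H_1$, producing a triangle inside $H_1$. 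This is impossible in a cycle graph of length at least $5$.

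For $4$--cycles: since triangles are ruled out, any $4$--cycle in $\Gamma$ is automatically induced. Suppose $a_0, a_1, a_2, a_3$ form an induced $4$--cycle. Then $P = a_0 \edge a_1 \edge a_2$ is an induced path of length $2$ (induced because $a_0 \not\edge a_2$), and $a_3 \in \Lk(a_0) \setminus P$. Applying $(1,m)$--branching with the singleton $\{a_3\}$ produces an induced cycle subgraph $H_1$ of $\Gamma$ with $5 \leq |H_1| \leq m$ whose vertex set contains $\{a_0, a_1, a_2, a_3\}$. Since $H_1$ is induced, the four edges $a_0 \edge a_1$, $a_1 \edge a_2$, $a_2 \edge a_3$, $a_3 \edge a_0$ all lie in $H_1$, so each $a_i$ already has two neighbors in $\{a_0, a_1, a_2, a_3\}$. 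As $H_1$ is a cycle, each of its vertices has degree exactly $2$ in $H_1$, so the set $\{a_0, a_1, a_2, a_3\}$ is closed under taking neighbors; by connectedness of $H_1$ this forces $V(H_1) = \{a_0, a_1, a_2, a_3\}$ and $|H_1| = 4$, contradicting $|H_1| \geq 5$.

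The main obstacle is essentially bookkeeping: choosing the right induced path $P$ to feed into the branching condition (length $1$ for triangles, length $2$ for $4$--cycles) and checking that the hypothesized short cycle already provides a valid choice of $u_1 \in \Lk(u) \setminus P$. The contradictions themselves are the elementary observation that an induced $C_k$ with $k \geq 5$ cannot contain a triangle or a $4$--cycle as a subgraph on a subset of its vertices.
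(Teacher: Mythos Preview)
Your proof is correct and follows essentially the same approach as the paper's: in both cases you pick the same induced path $P$ (the edge $u\edge v$ for triangles, the 2--path for squares) and the same $u_1$ (the remaining vertex of the short cycle), then derive a contradiction from the fact that an induced cycle of length at least~5 cannot contain the three pairwise-adjacent vertices or the four vertices of an induced square. The paper phrases the contradiction more tersely (``the triangle/square itself is the only induced cycle containing the segment''), while you spell out the degree-2 argument in the square case, but the content is the same.
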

      \begin{proof}
        Suppose $u$, $v$, and $w$ are vertices of a triangle. Take
        $u_1:=w$, which is a neighbor of $u$ distinct from $v$.
        Since $u_1$ is adjacent to $v$, the triangle itself is the only induced cycle containing the
        segment $u_1\edge u\edge v$, but the branching condition requires there
        is such a cycle of length at least 5. Thus, the girth is greater than 3. 

        Suppose $u$, $w$, $v$, $x$ are sequential vertices of a
        square.
        Since the girth is greater than 3, $u\edge w\edge v$ is an
        induced segment of length 2.
        Take $u_1:=x$. Since $u_1$ is adjacent to $v$, the square
        itself is the only induced cycle containing $u_1\edge u\edge w\edge v$, but
        the branching condition requires such a cycle of length at
        least 5. Thus, the girth is greater than 4. 
      \end{proof}

    A triangle-free graph is \emph{inseparable} if it is connected, has no separating complete subgraph, no cut pair, and no separating complete subgraph suspension.

      \begin{lemma}\label{lem:branching_implies_inseparable}
        A connected graph with $(2,m)$--branching is inseparable.
      \end{lemma}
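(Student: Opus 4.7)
My plan is to argue by contradiction, ruling out each possible separating configuration in turn. First, I would invoke Lemma~\ref{lem:branching_implies_high_girth} to conclude that $\Gamma$ has girth at least 5 and so is triangle-free. In a triangle-free graph, the only complete subgraphs are vertices and edges, and their suspensions are either non-adjacent pairs or induced length-2 paths; hence inseparability reduces to ruling out (i) a cut vertex, (ii) a cut pair, and (iii) a separating induced length-2 path. I would prove these in sequence, since the later arguments use the earlier ones.

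For (i), suppose $v$ were a cut vertex. Using the valence bound $\deg(v)\geq 3$, I pick distinct $w, u_1, u_2 \in \Lk(v)$ with $u_1, u_2$ in different components of $\Gamma\setminus\{v\}$. Applying $(2,m)$--branching to the edge $P = v\edge w$ with $\{u_1, u_2\}$ produces induced cycles $H_1, H_2$ through the edge $vw$; deleting $v$ from $H_i$ yields a path in $\Gamma\setminus\{v\}$ from $w$ to $u_i$, forcing $u_1, u_2$ into the same component and contradicting the choice.

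Cases (ii) and (iii) I would handle uniformly. Let $S$ denote the hypothetical separating set, with distinguished vertices $u, v \in S$ (the two cut pair vertices in case (ii), or the two endpoints of the induced length-2 path $u \edge w \edge v$ in case (iii)), and let $D_1, D_2, \ldots$ be the components of $\Gamma\setminus S$. By the previously established absences of a cut vertex and a cut pair, every $D_j$ meets $\Lk(u)$. I pick $u_1 \in D_1 \cap \Lk(u)$ and $u_2 \in D_2 \cap \Lk(u)$; an edge $u_1 u_2$ in $\Gamma$ would place $u_1, u_2$ in the same component of $\Gamma \setminus S$, so $u_1 \not\edge u_2$ and $P' = u_1 \edge u \edge u_2$ is induced of length 2. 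Applying $(2,m)$--branching to $P'$ with a pair $\{c_1, c_2\} \subseteq \Lk(u_1)\setminus\{u\}$ yields $d_1, d_2 \in \Lk(u_2)\setminus\{u\}$ and induced cycles $H'_1, H'_2$ with $V(H'_1)\cap V(H'_2) = \{u_1, u, u_2\}$. Triangle-freeness restricts $\Lk(u_1)\setminus\{u\}\subseteq \{v\}\cup D_1$ and $\Lk(u_2)\setminus\{u\}\subseteq \{v\}\cup D_2$ (in case (iii), $w$ is excluded because $u_i \edge w$ would create the triangle $u_i, u, w$).

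The key point is that $v$ must lie in both $V(H'_1)$ and $V(H'_2)$, contradicting the intersection condition. Indeed, deleting $\{u_1, u, u_2\}$ from $H'_i$ leaves a path from $c_i$ to $d_i$ in $\Gamma\setminus\{u,u_1,u_2\}$. If $v \in \{c_i, d_i\}$ this is immediate; otherwise $c_i \in D_1$, $d_i \in D_2$, and the path must cross between components. The only possible bridge is $v$: in case (iii), the vertex $w$ cannot appear in $V(H'_i)$, since the induced cycle condition would require the edge $uw$ in $H'_i$ but $u$'s two cycle-neighbors are $u_1, u_2 \neq w$. I choose $\{c_1, c_2\}$ to lie in $D_1$ when $|\Lk(u_1)\cap D_1|\geq 2$, falling back to $c_2 = v$ otherwise; either way the argument produces $v \in V(H'_i)$ for each $i$. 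The main obstacle is verifying this bridge-vertex analysis uniformly across both separating-set types and the fallback $c_2 = v$ subcase, ensuring triangle-freeness rules out all alternative bridges.
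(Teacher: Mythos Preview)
Your argument is correct but takes a genuinely different route from the paper's. The paper gives a single unified argument: for any minimal cut set $C$ and any $w\in C$, pick neighbors $u,v$ of $w$ lying in different components of $\Gamma\setminus C$, and apply $(2,m)$--branching to the length--2 path $u\edge w\edge v$. Each of the two resulting cycles $\gamma,\gamma'$ has an arc from $u$ to $v$ avoiding $w$, so each contributes a vertex of $C$ outside $\{u,w,v\}$; since $\gamma\cap\gamma'=u\edge w\edge v$ and $\gamma\cup\gamma'$ is induced, these two vertices together with $w$ form a $3$--anticlique in $C$. None of the candidate separators (vertex, edge, non-adjacent pair, induced $2$--path) contains a full $3$--anticlique, so all cases fall at once. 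Your approach instead applies branching at $u\in S$ to the path $u_1\edge u\edge u_2$ with $u_1,u_2$ in different components, and then argues that the \emph{other} distinguished vertex $v$ is forced into both cycles, contradicting $H_1'\cap H_2'=P'$. Both ideas exploit the induced--union clause, but the paper extracts a structural obstruction inside $C$, while you track a specific vertex across the cycles.

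Two small points in your write--up deserve attention. First, your reduction lists cut vertex, cut pair, and separating $2$--path but omits cut \emph{edges}; your case~(ii) argument in fact works verbatim for an adjacent pair $\{u,v\}$ (indeed, the induced condition then forbids $v\in V(H_i')$ outright, and the path from $c_i\in D_1$ to $d_i\in D_2$ still must cross $\{v\}$), so you should say so explicitly. Second, the clause ``by the previously established absence of a cut vertex and a cut pair'' is applied to both (ii) and (iii), but in case~(ii) you are in the process of ruling out cut pairs and may invoke only the absence of cut vertices; the sequential dependence should be made explicit. The paper's $3$--anticlique argument is shorter and sidesteps this sequencing; your approach is more hands--on and makes visible exactly where each cycle crosses the separator.
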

      \begin{proof}
        Suppose $\Gamma$ is a connected graph with
        $(2,m)$--branching. Then, $\Gamma$ is triangle-free by
        \Cref{lem:branching_implies_high_girth}. Thus, it suffices to verify that $\Gamma$ has no cut points, cut edges, cut pairs, or cut paths of length two.
        
        Since $\Gamma$ is not complete it has cut sets. 
        Since it is connected, if $w$ is a vertex belonging to a minimal cut
       set $C$, then every component of $\Gamma\setminus C$
       contains a neighbor of $w$, because otherwise we could remove
       $w$ from $C$ and get a smaller cut set.
       For any such choice of $C$ and $w$, let $u$ and $v$ be neighbors of $w$ that are in different components of $\Gamma\setminus C$.
        The $(2,m)$--branching condition implies there are two cycles
       $\gamma$ and $\gamma'$ whose intersection is the segment
       $u\edge w\edge v$ and whose union is an induced subgraph.
       As $C$ separates $u$ and $v$, it must contain 
       at least $w$, a vertex $x$ from $\gamma\setminus\{u,v,w\}$, and a vertex $y$ from $\gamma'\setminus\{u,v,w\}$.
       Since $\gamma\cup\gamma'$ is induced, $\{w,x,y\}$ is an anticlique.
       Thus, any minimal cut set of $\Gamma$ contains a 3--anticlique.
       
       Let $\mathcal{C}$ be the collection of full subgraphs of $\Gamma$ consisting of single vertices, pairs of nonadjacent vertices, single edges, and 2--paths.
       Inseparability is equivalent to $\mathcal{C}$ not containing a cut set. 
       If $\mathcal{C}$ contains a cut set then it contains a minimal cut set, since $\mathcal{C}$ is closed under passing to nonempty full subgraphs.
       But $\mathcal{C}$ cannot contain a minimal cut set, since none of the sets in $\mathcal{C}$ contain a 3--anticlique as a full subgraph.
     \end{proof}

     \begin{lemma}\label{lem:branching_implies_nonplanar}
      The minimum genus of an orientable surface into which a graph
      with $(n,m)$--branching embeds is bounded below by a superlinear
      function of $n$ that is positive for $n\geq 3$.
      In particular, a graph with $(3,m)$--branching is nonplanar.
      \end{lemma}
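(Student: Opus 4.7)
The plan is to combine the girth bound from \Cref{lem:branching_implies_high_girth}, the minimum-degree part of \Cref{defn:n6_branching}, a vertex-count lower bound, and Euler's formula for graph embeddings.

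First, I would collect the structural data. By \Cref{lem:branching_implies_high_girth}, $\Gamma$ has girth at least $5$; by the definition of $(n,m)$--branching, every vertex has valence at least $n+1$; and for $n\geq 2$, \Cref{lem:branching_implies_inseparable} guarantees that $\Gamma$ is connected. To lower-bound the number of vertices, I would pick any edge $u\edge v$: each of $u$ and $v$ has $n$ further neighbors, and girth at least $5$ forces these $2n$ neighbors to be distinct from $u$, $v$, and from each other, since any coincidence creates a triangle. Hence $V\geq 2n+2$.

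Next I would apply Euler's formula to a cellular embedding of $\Gamma$ into a closed orientable surface of minimum genus $g$. From $V-E+F = 2-2g$, the girth bound $2E\geq 5F$, and the degree bound $2E\geq(n+1)V$, we obtain
\[
g \;\geq\; 1+\frac{3E-5V}{10} \;\geq\; 1+\frac{V(3n-7)}{20} \;\geq\; 1+\frac{(2n+2)(3n-7)}{20}.
\]
The right-hand side is quadratic in $n$, and hence superlinear. For $n\geq 3$ it is at least $1.8$, forcing $g\geq 2>0$, which in particular gives the nonplanarity claim.

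The argument is essentially bookkeeping, with the only subtlety being the assumption that the embedding is cellular. This is automatic on a minimum-genus surface, since a non-disk face would allow compressing along an essential simple closed curve and re-embedding on a surface of lower genus, contradicting minimality. The branching hypothesis is more than strong enough for this crude Euler bound; one could conceivably sharpen the growth rate by exploiting the fact that each edge lies in many induced short cycles (rather than just producing vertices of large degree), but the quadratic bound above already suffices for the stated conclusion.
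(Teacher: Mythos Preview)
Your approach is essentially the paper's: combine Euler's formula on a minimum-genus (hence cellular) embedding with the girth bound $5F\le 2E$ and the degree bound $(n+1)V\le 2E$. Two remarks are in order.

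First, your appeal to \Cref{lem:branching_implies_inseparable} for connectedness is mistaken: that lemma \emph{assumes} connectedness rather than proving it. The fix is immediate---each component of an $(n,m)$--branching graph is itself $(n,m)$--branching, and the minimum genus of the whole graph is at least that of any component---but you should not cite the inseparability lemma here. (The paper's own proof glosses over the same point.) Note also that your final substitution $V\ge 2n+2$ into $g\ge 1+\tfrac{V(3n-7)}{20}$ is only valid when $3n-7\ge 0$, i.e.\ $n\ge 3$; this is harmless since the statement only demands positivity from $n\ge 3$ onward, but it is worth saying explicitly.

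Second, your vertex count $V\ge 2n+2$ yields a quadratic bound, which suffices for ``superlinear,'' but the paper squeezes more out of the same ingredients. With girth at least $5$ and minimum degree $n+1$, the first two layers of a breadth-first search from any vertex are disjoint, giving at least $(n+1)+(n+1)n+\tfrac{1}{2}(n+1)n^2$ edges, and hence $g\in\Omega(n^3)$ via the same Euler inequality $g\ge 1+\tfrac{3n-7}{10(n+1)}E$. Your argument is correct as stated; the paper's counting is just a bit sharper.
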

      The 1--skeleton of the dodecahedron is an example of a planar graph with
      $(2,9)$--branching. 
      \begin{proof}
        Suppose $\Gamma$ has $(n,m)$--branching and is embedded into an
        orientable surface $\Sigma$ of minimal genus~$g$.
        Minimality implies complementary components of
        $\Sigma\setminus\Gamma$ are homeomorphic to disks, so if we
        call the number of such regions $F$ and let $E$ and $V$ be the
        number of edges and vertices of $\Gamma$, respectively, then
        $2-2g=V-E+F$.
        
        By \Cref{lem:branching_implies_high_girth}, $\Gamma$ has girth
        at least 5, so $5F\leq 2E$.
        The definition of $(n,m)$--branching requires $\Gamma$ to have
        valence at least $n+1$, so $(n+1)V\leq 2E$.
        Therefore:
        \[g\geq 1+\frac{3n-7}{10(n+1)}E\]
        This is enough to conclude nonplanarity when $n\geq 3$.
        To see $g\in\Omega(n^3)$, observe that since
        $\Gamma$ has valence at least $n+1$ and girth at least 5, it
        has at least 
        $(n+1)+(n+1)n+\frac{(n+1)n^2}{2}$ edges.
      \end{proof}

      \begin{prop}\label{prop:branching_usually_implies_menger}
        If $\Gamma$ is a graph with
        $(1,m)$--branching, then $W_\Gamma$ is hyperbolic.
        If $\Gamma$ is connected and has $(2,m)$--branching then $\bdry
        W_\Gamma$ is either the Sierpinski carpet or the Menger curve,
        according to whether $\Gamma$ is planar or not.
        If $\Gamma$ is connected with $(3,m)$--branching, then $\bdry W_\Gamma$ is
        the Menger curve.
      \end{prop}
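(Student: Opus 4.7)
The hyperbolicity claim is immediate: \Cref{lem:branching_implies_high_girth} says $(1,m)$-branching forces girth at least $5$, so in particular $\Gamma$ has no induced $4$-cycle, and \Cref{lemma:RACG_props}(1) applies.

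For the two statements about the boundary, the plan is to feed the hypotheses into Kapovich--Kleiner's trichotomy for one-dimensional boundaries of one-ended hyperbolic groups \cite{KapKle00}, which asserts that such a boundary is the circle, the Sierpi\'nski carpet, or the Menger curve, provided it has no local cut points. So I need to verify:
\begin{enumerate}
\item $W_\Gamma$ is one-ended (so $\bdry W_\Gamma$ is connected);
\item $\bdry W_\Gamma$ has no local cut points;
\item $\bdry W_\Gamma$ is not a circle;
\item planarity of $\Gamma$ correctly distinguishes carpet from Menger curve.
\end{enumerate}
For (1)--(3) I use \Cref{lem:branching_implies_inseparable}: under $(2,m)$--branching, $\Gamma$ is inseparable, i.e.\ triangle-free with no separating complete subgraph, cut pair, or cut-path of length two. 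By the Davis--Mihalik--Tschantz criterion for splittings of Coxeter groups (or directly by Papasoglu--Swenson's JSJ analysis translated into the graph language via Mihalik--Tschantz or Dani--Thomas), this means $W_\Gamma$ is one-ended and does not split over a virtually cyclic subgroup, which by Bowditch's theorem is equivalent to $\bdry W_\Gamma$ having no local cut points. Finally, $\Gamma$ is not a cycle (every vertex has valence at least $n+1\geq 3$), so $\bdry W_\Gamma$ is not $S^1$.

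For (4), I use the established correspondence for hyperbolic right-angled Coxeter groups with triangle-free, square-free, inseparable nerve: the nerve $\Gamma$ embeds in $S^2$ if and only if $\bdry W_\Gamma$ embeds in $S^2$, which by Kapovich--Kleiner's theorem is equivalent to $\bdry W_\Gamma$ being the Sierpi\'nski carpet (rather than the Menger curve). The ``only if'' is classical: a planar flag-no-square graph can be realized as the nerve of a convex cocompact reflection group in $\mathbb{H}^3$ (via Andreev's theorem after a subdivision, or directly by the construction of Hruska--Kleiner type), so the Davis complex is quasi-isometric to a convex subset of $\Hy^3$ and its boundary embeds in $S^2$. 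The ``if'' direction is the more delicate one: when $\Gamma$ is non-planar, $K_5$ or $K_{3,3}$ as a minor produces a non-planar topological feature in $\bdry W_\Gamma$, ruling out a carpet. Combining these gives the planar/non-planar dichotomy.

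The third assertion follows: by \Cref{lem:branching_implies_nonplanar}, $(3,m)$--branching forces $\Gamma$ to be non-planar, and $(3,m)$--branching implies $(2,m)$--branching, so the second assertion gives that $\bdry W_\Gamma$ is the Menger curve.

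The main obstacle is step (4), the clean planar-versus-non-planar dichotomy; while the non-planar direction (producing a Menger curve) is accessible via explicit non-planar subgraphs yielding non-planar ``wheel'' patterns in the boundary \`a la Kapovich--Kleiner, the planar direction really needs the existence of a convex cocompact reflection-group action in $\mathbb{H}^3$ whose limit set is a carpet, which is where one invokes the well-known results on right-angled reflection groups with planar nerve.
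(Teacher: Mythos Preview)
Your argument follows the same route as the paper: hyperbolicity via \Cref{lem:branching_implies_high_girth} and \Cref{lemma:RACG_props}, inseparability via \Cref{lem:branching_implies_inseparable}, nonplanarity via \Cref{lem:branching_implies_nonplanar}, and then a cited carpet/Menger dichotomy for inseparable triangle-free presentation graphs. The only difference is packaging: the paper invokes this dichotomy as a black box (\cite[Corollary~1.7]{MR4563333} together with \cite{Swi16,MR3934913,DanKapSwi24,MR4453753}) rather than trying to rederive it. Your attempt to sketch the two directions is where the proposal wobbles---Andreev's theorem concerns acute-angled polyhedra, i.e.\ triangulations of $S^2$, not arbitrary inseparable planar graphs, so ``Andreev after a subdivision'' is not a straightforward route to a convex-cocompact $\mathbb{H}^3$ action for $W_\Gamma$ itself; and ``a $K_5$ or $K_{3,3}$ minor produces a non-planar feature in $\bdry W_\Gamma$'' is not a proof that no open subset of the boundary is planar, which is what the Menger characterization requires. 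Since you ultimately appeal to ``the established correspondence'' anyway, the cleanest fix is to do exactly what the paper does: cite the result and drop the heuristics.
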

      \begin{proof}
        The graph $\Gamma$ has girth
        at least 5 by \Cref{lem:branching_implies_high_girth}, so $\Gamma$ is triangle-free and $W_\Gamma$ is
        hyperbolic by \Cref{lemma:RACG_props}.
        If $\Gamma$ is connected with $(2,m)$--branching then $\Gamma$ is inseparable by \Cref{lem:branching_implies_inseparable}.
        Thus, planarity of $\Gamma$ determines whether $\bdry
        W_\Gamma$ is the Sierpinski carpet or the Menger curve by
        \cite[Corollary~1.7]{MR4563333} and related works
        \cite{Swi16,MR3934913,DanKapSwi24,MR4453753}.
        The boundary is homeomorphic to the Menger curve if $\Gamma$ is connected and has $(3,m)$--branching, since 
        \Cref{lem:branching_implies_nonplanar} forces nonplanarity.
      \end{proof}

Now we turn to the main objective of this section:
    \begin{thm} \label{thm:confdim_lower_bounds}
        If $\Gamma$ is a graph with $(n,m)$--branching
      then $W_\Gamma$ is hyperbolic and satisfies:
            \[ \Confdim(\p W_\Gamma) \geq 1 + \frac{\log n}{\log 3m-7}\]
          \end{thm}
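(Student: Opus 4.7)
The plan is to invoke Mackay's combinatorial round tree theorem (Theorem~\ref{thm:mackayRT}): it suffices to construct a combinatorial round tree with vertical branching $V=n$ and horizontal branching $H=3m-7$ that quasi-isometrically embeds into the Davis complex $X_\Gamma$. Hyperbolicity of $W_\Gamma$ is immediate from Lemma~\ref{lem:branching_implies_high_girth} (which, in particular, rules out induced 4-cycles) combined with Lemma~\ref{lemma:RACG_props}(1).

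The sheets will come from induced cycles in $\Gamma$. For any induced cycle $C$ with $5\leq |C|\leq m$, the subgroup $W_C$ is a right-angled hyperbolic reflection group of Fuchsian type, and its Davis complex $X_C$ is a convex subcomplex of $X_\Gamma$ by Lemma~\ref{lemma:RACG_props}(2), giving a CAT(0) square complex quasi-isometric to $\mathbb{H}^2$. I would build the round tree inductively: choose a basepoint $x_0\in X_\Gamma$ and let $A_0$ be a seed disk whose outer boundary decomposes into induced paths $P$ of length 1 or 2 in $\Gamma$. To pass from $A_n$ to $A_{n+1}$, for each such boundary path $P$ apply $(n,m)$--branching at $P$ to obtain cycles $H_1,\ldots,H_n$, and append a width-1 strip of squares drawn from each $X_{H_i}$ along $P$. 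Because the $H_i$ pairwise intersect only in $P$ and $\bigcup_i H_i$ is an induced subgraph of $\Gamma$, the appended strips intersect only in $\St(P)$: distinct sheets diverge immediately upon leaving $A_n$, and Definition~\ref{defn:RT}(4) holds in the strong form described in the paragraph following that definition. This realizes the vertical branching $V=n$.

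For the horizontal branching bound (Definition~\ref{defn:RT}(5)), a 2-cell $Q\subseteq A_n$ abutting the branching region $P$ has at most 3 edges and 2 vertices exposed to the extension. Each exposed edge of $Q$ lies in exactly one new square per strip $X_{H_i}$, and each exposed vertex, whose link in $X_{H_i}$ is the cycle $H_i$ of length at most $m$, accounts for at most $m-3$ further new squares per strip after discounting already-counted adjacencies. A careful tally, ruling out double counts between edge-adjacent and vertex-adjacent new squares, bounds the total number of new 2-cells adjacent to $Q$ across all $n$ strips by $n(3m-7)$, giving $H=3m-7$. The quasi-isometric embedding of the 1-skeleton of the round tree into $X_\Gamma$ then follows from the convexity of each $X_{H_i}$, the convexity of the overlaps $X_{H_i}\cap X_{H_j}=X_P$, and the CAT(0) geometry of $X_\Gamma$. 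Applying Theorem~\ref{thm:mackayRT} yields the claimed bound.

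I expect the main obstacle to be two-fold. First, the horizontal-branching bookkeeping is delicate: one must ensure that overlaps at the corners of $Q$, where several strips meet along the star of $P$, are not over- or under-counted. Second, and more fundamentally, one must verify that the inductive hypothesis propagates, so that the new outer boundary of $A_{n+1}$ again decomposes into induced paths of length 1 or 2 in $\Gamma$ to which $(n,m)$--branching can be reapplied at the next step. Both issues are local combinatorial checks, but the second relies crucially on the inducedness of $\bigcup_i H_i$ in the branching hypothesis, which prevents the attached strips from creating unexpected shortcuts or chords that would spoil the induced-path property of subsequent boundaries.
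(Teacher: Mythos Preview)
Your overall strategy---build a combinatorial round tree in $X_\Gamma$ using the $(n,m)$--branching hypothesis and then apply Mackay's theorem---is exactly the paper's approach, and your identification of the two delicate points (the horizontal-branching count and the propagation of the inductive hypothesis) is on target.

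However, there is a genuine confusion in how you describe the inductive step. You write that the outer boundary of $A_n$ ``decomposes into induced paths $P$ of length 1 or 2 in $\Gamma$'' and that you apply branching once per such $P$. This is not how the construction works. The outer boundary $E_{\bt}$ of a disk $D_{\bt}$ is a single (possibly long) edge path in the Davis complex; the induced path $P$ to which the branching condition applies is the \emph{link} $\Lk_{D_{\bt}}(y_k)\subset\Gamma$ of a vertex $y_k$ on $E_{\bt}$, not a segment of $E_{\bt}$ itself. The construction walks along $E_{\bt}$ vertex by vertex: at each interior $y_k$ the link is an edge $u\edge v$ or a 2--path $u\edge w\edge v$, and you apply branching there with the $u_i$'s \emph{forced} by the strip squares already attached at $y_{k-1}$. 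The asymmetric form of Definition~\ref{defn:n6_branching} (given $u_i$, find $v_i$) is precisely designed for this chaining. Your description reads as one global application of branching per boundary component, which would not build a width-1 strip along all of $E_{\bt}$ and would not explain why the $u_i$ are prescribed rather than free.

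Two smaller points. First, your horizontal-branching tally (``at most 3 edges and 2 vertices exposed'') does not match the geometry: a square $Q$ meets $E_{\bt}$ in a segment of length 1 or 2, and the paper handles these two cases separately to reach $2m-5$ and $3m-7$, respectively. Second, the quasi-isometric embedding does not follow from convexity of the individual $X_{H_i}$ and their overlaps, since the round tree is not a union of such subcomplexes; rather, one checks at every stage that the link of every vertex in the growing complex is an induced subgraph of $\Gamma$ (this is exactly what the ``$\bigcup_i H_i$ induced'' clause guarantees), so the entire round tree is locally convex, hence convex, in $X_\Gamma$.
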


          \Cref{thm:confdim_lower_bounds} is established by applying
          \Cref{thm:mackayRT} to the output of \Cref{const:round_tree}
          below.
          This construction is inspired by work of
    Mackay~\cite{mackay}; see also \cite{fieldguptalymanstark}.

    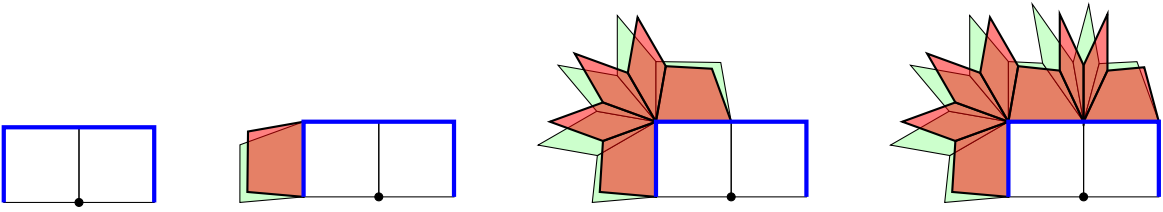
\begin{figure}
      \centering
     \begin{tikzpicture}% A_0
       % draw A_0
       \coordinate[label={[label distance=-1pt] 0:$$}] (o) at (0,0);
       \coordinate[label={[label distance=-1pt] 0:$$}] (w) at (180:1);
       \coordinate[label={[label distance=-1pt] 0:$$}] (x) at (90:1);
       \coordinate[label={[label distance=-1pt] 0:$$}] (y) at (0:1);
       \coordinate[label={[label distance=-1pt] 0:$$}] (wx) at ($(w)+(x)$);
       \coordinate[label={[label distance=-1pt] 0:$$}] (xw) at (wx);
       \coordinate[label={[label distance=-1pt] 0:$$}] (xy) at ($(x)+(y)$);
       \coordinate[label={[label distance=-1pt] 0:$$}] (yx) at (xy);
       \draw (o)--(w)--(xw)--(x)--(o);
       \draw (o)--(x)--(xy)--(y)--(o);

       % highlight E_0 in blue and x_0 with a dot
       \draw[ultra thick,blue] (w)--(wx)--(x)--(xy)--(y);
       \filldraw (o) circle (1.5pt);
    \end{tikzpicture}
    \hfill
    \begin{tikzpicture}% A_0 plus two squares
       % draw A_0
       \coordinate[label={[label distance=-1pt] 0:$$}] (o) at (0,0);
       \coordinate[label={[label distance=-1pt] 0:$$}] (w) at (180:1);
       \coordinate[label={[label distance=-1pt] 0:$$}] (x) at (90:1);
       \coordinate[label={[label distance=-1pt] 0:$$}] (y) at (0:1);
       \coordinate[label={[label distance=-1pt] 0:$$}] (wx) at ($(w)+(x)$);
       \coordinate[label={[label distance=-1pt] 0:$$}] (xw) at (wx);
       \coordinate[label={[label distance=-1pt] 0:$$}] (xy) at ($(x)+(y)$);
       \coordinate[label={[label distance=-1pt] 0:$$}] (yx) at (xy);
       \draw (o)--(w)--(xw)--(x)--(o);
       \draw (o)--(x)--(xy)--(y)--(o);

       % draw the green square
       \coordinate[label={[label distance=-1pt] 0:$$}] (pwa) at ($(w)+(185:.85)$);
       \coordinate[label={[label distance=-1pt] 0:$$}] (pwxa) at ($(wx)+(200:.9)$);
     
       \draw[fill=green!20] (wx)--(pwxa)--(pwa)--(w)--cycle;

       % draw the red square
       \coordinate[label={[label distance=-1pt] 0:$$}] (wa) at ($(w)+(175:.75)$);
       \coordinate[label={[label distance=-1pt] 0:$$}] (wxa) at ($(wx)+(190:.75)$);
      
       \draw[thick, fill=red, fill opacity=0.5] (wx)--(wxa)--(wa)--(w) --cycle;

       % highlight E_0 in blue and x_0 with a dot
       \draw[ultra thick,blue] (w)--(wx)--(x)--(xy)--(y);
       \filldraw (o) circle (1.5pt);
    \end{tikzpicture}
    \hfill
    \begin{tikzpicture}% A_0 plus the first red and green
      % square-cycles
       % draw A_0
       \coordinate[label={[label distance=-1pt] 0:$$}] (o) at (0,0);
       \coordinate[label={[label distance=-1pt] 0:$$}] (w) at (180:1);
       \coordinate[label={[label distance=-1pt] 0:$$}] (x) at (90:1);
       \coordinate[label={[label distance=-1pt] 0:$$}] (y) at (0:1);
       \coordinate[label={[label distance=-1pt] 0:$$}] (wx) at ($(w)+(x)$);
       \coordinate[label={[label distance=-1pt] 0:$$}] (xw) at (wx);
       \coordinate[label={[label distance=-1pt] 0:$$}] (xy) at ($(x)+(y)$);
       \coordinate[label={[label distance=-1pt] 0:$$}] (yx) at (xy);
       \draw (o)--(w)--(xw)--(x)--(o);
       \draw (o)--(x)--(xy)--(y)--(o);

       % draw the green squares
       \coordinate[label={[label distance=-1pt] 0:$$}] (pwa) at ($(w)+(185:.85)$);
       \coordinate[label={[label distance=-1pt] 0:$$}] (pwxa) at ($(wx)+(210:.9)$);
       \coordinate[label={[label distance=-1pt] 0:$$}] (pwxb) at ($(wx)+(170:.8)$);
       \coordinate[label={[label distance=-1pt] 0:$$}] (pwxc) at ($(wx)+(130:.8)$);
       \coordinate[label={[label distance=-1pt] 0:$$}] (pwxd) at ($(wx)+(90:.8)$);
       \coordinate[label={[label distance=-1pt] 0:$$}] (pxd) at ($(x)+(100:.8)$);
     
       \draw[fill=green!20] (wx)--(pwxa)--(pwa)--(w)--cycle;
       \draw[fill=green!20]  (pwxa)--($(pwxa)+(pwxb)-(wx)$)--(pwxb)--(wx)--cycle;
       \draw[fill=green!20]  (pwxb)--($(pwxb)+(pwxc)-(wx)$)--(pwxc)--(wx)--cycle;
       \draw[fill=green!20]  (pwxc)--($(pwxc)+(pwxd)-(wx)$)--(pwxd)--(wx)--cycle;
       \draw[fill=green!20]  (wx)--(pwxd)--(pxd)--(x)--cycle;
 
       % draw the red squares
       \coordinate[label={[label distance=-1pt] 0:$$}] (wa) at ($(w)+(175:.75)$);
       \coordinate[label={[label distance=-1pt] 0:$$}] (wxa) at ($(wx)+(200:.75)$);
       \coordinate[label={[label distance=-1pt] 0:$$}] (wxb) at ($(wx)+(160:.75)$);
       \coordinate[label={[label distance=-1pt] 0:$$}] (wxc) at ($(wx)+(120:.75)$);
       \coordinate[label={[label distance=-1pt] 0:$$}] (wxd) at ($(wx)+(80:.75)$);
       \coordinate[label={[label distance=-1pt] 0:$$}] (xd) at ($(x)+(110:.75)$);
      
       \draw[thick, fill=red, fill opacity=0.5] (wx)--(wxa)--(wa)--(w) --cycle;
       \draw[thick, fill=red, fill opacity=0.5] (wx)--(wxa)--($(wxa)+(wxb)-(wx)$)--(wxb)--cycle;
       \draw[thick, fill=red, fill opacity=0.5] (wx)--(wxb)--($(wxb)+(wxc)-(wx)$)--(wxc)--cycle;
       \draw[thick, fill=red, fill opacity=0.5] (wx)--(wxc)--($(wxc)+(wxd)-(wx)$)--(wxd)--cycle;
       \draw[thick, fill=red, fill opacity=0.5]
       (wx)--(wxd)--(xd)--(x)--cycle;

       % highlight E_0 in blue and x_0 with a dot
       \draw[ultra thick,blue] (w)--(wx)--(x)--(xy)--(y);
       \filldraw (o) circle (1.5pt);
    \end{tikzpicture}
        \hfill
      \begin{tikzpicture}% A_0 plus the first and second red and green
       % square cycles
       % draw A_0
       \coordinate[label={[label distance=-1pt] 0:$$}] (o) at (0,0);
       \coordinate[label={[label distance=-1pt] 0:$$}] (w) at (180:1);
       \coordinate[label={[label distance=-1pt] 0:$$}] (x) at (90:1);
       \coordinate[label={[label distance=-1pt] 0:$$}] (y) at (0:1);
       \coordinate[label={[label distance=-1pt] 0:$$}] (wx) at ($(w)+(x)$);
       \coordinate[label={[label distance=-1pt] 0:$$}] (xw) at (wx);
       \coordinate[label={[label distance=-1pt] 0:$$}] (xy) at ($(x)+(y)$);
       \coordinate[label={[label distance=-1pt] 0:$$}] (yx) at (xy);
       \draw (o)--(w)--(xw)--(x)--(o);
       \draw (o)--(x)--(xy)--(y)--(o);

       % draw the green squares
       \coordinate[label={[label distance=-1pt] 0:$$}] (pwa) at ($(w)+(185:.85)$);
       \coordinate[label={[label distance=-1pt] 0:$$}] (pwxa) at ($(wx)+(210:.9)$);
       \coordinate[label={[label distance=-1pt] 0:$$}] (pwxb) at ($(wx)+(170:.8)$);
       \coordinate[label={[label distance=-1pt] 0:$$}] (pwxc) at ($(wx)+(130:.8)$);
       \coordinate[label={[label distance=-1pt] 0:$$}] (pwxd) at ($(wx)+(90:.8)$);
       \coordinate[label={[label distance=-1pt] 0:$$}] (pxd) at ($(x)+(125:.95)$);
       \coordinate[label={[label distance=-1pt] 0:$$}] (pxe) at ($(x)+(100:.8)$);
       \coordinate[label={[label distance=-1pt] 0:$$}] (pxf) at ($(x)+(75:.8)$);
       \coordinate[label={[label distance=-1pt] 0:$$}] (pxyf) at ($(xy)+(110:.85)$);
       \draw[fill=green!20] (wx)--(pwxa)--(pwa)--(w)--cycle;
       \draw[fill=green!20]  (pwxa)--($(pwxa)+(pwxb)-(wx)$)--(pwxb)--(wx)--cycle;
       \draw[fill=green!20]  (pwxb)--($(pwxb)+(pwxc)-(wx)$)--(pwxc)--(wx)--cycle;
       \draw[fill=green!20]  (pwxc)--($(pwxc)+(pwxd)-(wx)$)--(pwxd)--(wx)--cycle;
       \draw[fill=green!20]  (wx)--(pwxd)--(pxd)--(x)--cycle;
       \draw[fill=green!20]  (x)--(pxd)--($(pxd)+(pxe)-(x)$)--(pxe)--cycle;
       \draw[fill=green!20]  (x)--(pxe)--($(pxe)+(pxf)-(x)$)--(pxf)--cycle;
       \draw[fill=green!20]  (x)--(pxf)--(pxyf)--(xy)--cycle;

       % draw the red squares
       \coordinate[label={[label distance=-1pt] 0:$$}] (wa) at ($(w)+(175:.75)$);
       \coordinate[label={[label distance=-1pt] 0:$$}] (wxa) at ($(wx)+(200:.75)$);
       \coordinate[label={[label distance=-1pt] 0:$$}] (wxb) at ($(wx)+(160:.75)$);
       \coordinate[label={[label distance=-1pt] 0:$$}] (wxc) at ($(wx)+(120:.75)$);
       \coordinate[label={[label distance=-1pt] 0:$$}] (wxd) at ($(wx)+(80:.75)$);
       \coordinate[label={[label distance=-1pt] 0:$$}] (xd) at ($(x)+(115:.75)$);
       \coordinate[label={[label distance=-1pt] 0:$$}] (xe) at ($(x)+(90:.75)$);
       \coordinate[label={[label distance=-1pt] 0:$$}] (xf) at ($(x)+(65:.75)$);
       \coordinate[label={[label distance=-1pt] 0:$$}] (xyf) at ($(xy)+(105:.75)$);
       \draw[thick, fill=red, fill opacity=0.5] (wx)--(wxa)--(wa)--(w) --cycle;
       \draw[thick, fill=red, fill opacity=0.5] (wx)--(wxa)--($(wxa)+(wxb)-(wx)$)--(wxb)--cycle;
       \draw[thick, fill=red, fill opacity=0.5] (wx)--(wxb)--($(wxb)+(wxc)-(wx)$)--(wxc)--cycle;
       \draw[thick, fill=red, fill opacity=0.5] (wx)--(wxc)--($(wxc)+(wxd)-(wx)$)--(wxd)--cycle;
       \draw[thick, fill=red, fill opacity=0.5] (wx)--(wxd)--(xd)--(x)--cycle;
       \draw[thick, fill=red, fill opacity=0.5] (x)--(xd)--($(xd)+(xe)-(x)$)--(xe)--cycle;
       \draw[thick, fill=red, fill opacity=0.5] (x)--(xe)--($(xe)+(xf)-(x)$)--(xf)--cycle;
       \draw[thick, fill=red, fill opacity=0.5]
       (x)--(xf)--(xyf)--(xy)--cycle;

       % highlight E_0 in blue and x_0 with a dot
       \draw[ultra thick,blue] (w)--(wx)--(x)--(xy)--(y);
       \filldraw (o) circle (1.5pt);
    \end{tikzpicture}
    \caption{\small{ The initial steps of the round tree construction with $(2,6)$--branching. The initial subcomplex $A_0$ consists of two squares incident to the marked vertex. The outer edge path is drawn in blue.
    At the first blue edge we attach two new squares, colored red and green.
        At the first interior vertex of the blue path we add four more red squares such that the five red squares plus the square from $A_0$ make a 6--cycle of squares. Similarly add four more green squares to make a second 6--cycle of squares sharing only the $A_0$ square with the red cycle. The $(2,6)$--branching condition makes these choices possible.
        Continue along the blue path, extending the red and green `strips' using the branching condition at each successive interior vertex of the blue path.
}}
\label{figure:round_tree}
\figurealttext{Diagram illustrating the first stages of the round tree construction with $(2,6)$-branching. Two initial squares meet at a marked vertex. A blue outer edge path is extended by attaching red and green strips of squares. At successive vertices of the blue path, the red and green squares are completed to 6-cycles sharing only the prescribed initial square.}
    \end{figure}

    \begin{construction}[Combinatorial round tree in Davis complex] \label{const:round_tree}
      Suppose $\Gamma$ is a graph with $(n,m)$--branching.
      We recursively construct
        a combinatorial round tree $A$ that is a convex subcomplex of
        the Davis complex $X_\Gamma$ of $W_\Gamma$ with
    vertical branching $n$ and horizontal branching at most $3m-7$.

Pick a base vertex $x_0$ in $X_\Gamma$ and an induced edge path $P$ in its link $\Lk(x_0)$.
This corresponds to a chain of squares $A_0 \subset X_\Gamma$ containing $x_0$, such that  $\Lk_{A_0}(x_0)=P$.
The fact that $P$ is induced implies $A_0$ is convex \cite[Lemma~2.11]{haglundwise}.
For illustrative purposes, in \Cref{figure:round_tree} we chose $P$ of
length 2, but any choice between 1 and $\text{girth}(\Gamma)-2$ would
have been possible.

Label one boundary edge of $A_0$ incident to $x_0$ by $L_0$, and label the other boundary edge of $A_0$ incident to $x_0$ by $R_0$.
The outer edge path $E_0$ of $A_0$ is the rest of the boundary; it  is the edge path that is the closure of $\bdry A_0\setminus (L_0\cup R_0)$. 
See \Cref{figure:round_tree}. 

Let $T = \{1, \ldots, n\}$, and let $T^0 = (\, )$, $D_{(\,)}:=A_0$, $L_{(\,)}:=L_0$ and $E_{(\,)}:=E_0$.

Start with $\bt=(\, )$.  We are going to build $n$--many distinct strips attached along $E_{\bt}$.
Suppose $E_{\bt} = y_0\edge \cdots\edge y_{r+1}$ with $y_0 \in L_{\bt}$ and $y_{r+1} \in R_{\bt}$. 
The link  $\Lk_{D_{\bt}}(y_0)$ is a single edge $u\edge v$.  Suppose the edge $y_0\edge y_1$ is labeled $v$, and the edge of $L_{\bt}$ incident to $y_0$ is labeled $u$.  
As the valence of each vertex of $\Gamma$ is at least $n+1$, there exists a set of distinct vertices $\{v_1,\dots,v_n\}\subset\Lk_\Gamma(v)\setminus\{u\}$.  Correspondingly, for each $i\in T$, there is a square in $X_\Gamma$ containing $y_0\edge y_1$ whose sides are labeled $v$ and $v_i$.  
Define these to be the respective first squares of the $n$--many new strips.  
The links of $y_0$ and $y_1$ in the subcomplex obtained by attaching these squares are trees of diameter 2 or 3, so they are induced subgraphs of $\Gamma$, which has girth at least 5.  
Thus, the new  subcomplex remains locally convex,
   hence, convex \cite[Lemma~2.11]{haglundwise}.
For each $i\in T$, let $L_{(\bt,i)}$ be the union of  $L_{\bt}$ and
the edge labeled $v_i$ at $y_0$.

 Now continue along the outer edge path $E_{\bt}$ extending each of the $n$--many strips as depicted in \Cref{figure:round_tree}.
Suppose the construction has progressed to vertex $y_k\in E_{\bt}$.
Let $u$ be the label of the edge $y_{k-1}\edge y_k$, and let $u_i$ be the label such that the last square of strip $i$ is labeled by $u$ and $u_i$. 
If $k=r+1$ we are done extending the strips; define $R_{(\bt,i)}$ to be the union of $R_{\bt}$ and the edge labeled $u_i$ incident to $y_{r+1}$.
Otherwise, extend the strips at $y_k$ in one of two possible ways according to whether $\Lk_{D_{\bt}}(y_k)$ is a single edge or a 2--path.
              
 \begin{enumerate}[1. ]
  \item If $\Lk_{D_{\bt}}(y_k)$ is a single edge $u\edge v$ then $v$ is the
    label of edge $y_k\edge y_{k+1}$ of $E_{\bt}$.
    \Cref{defn:n6_branching} implies that given $\{u_1,\dots,u_n\}$ as
    above there are distinct $\{v_1,\dots,v_n\}\subset
    \Lk_\Gamma(v)\setminus\{u\}$ and cycles $H_i$ intersecting exactly
    in $u\edge v$ whose union is an induced subgraph.
    Cycle $H_i$ implies there is a cycle of squares at $y_k$ that
    contains the square of $D_{\bt}$ containing $y_k$ and the last square
    of strip $i$. 
   The fact that $\cup _i H_i$ is induced in $\Gamma$ implies after
   adding all of these new squares to the $n$--many strips, the subcomplex remains locally convex,
   hence, convex.
  \item If $\Lk_{D_{\bt}}(y_k)$ is an induced segment $u\edge w\edge v$ then
    $y_k$ belongs to two squares of $D_{\bt}$: edge $y_{k-1}\edge y_k$ of
    $E_{\bt}$ is labeled $u$ and is contained in a square of
    $D_{\bt}$ with sides labeled $u$ and $w$, and edge
    $y_k\edge y_{k+1}$ of $E_{\bt}$ is labeled $v$ and is contained in a square of $D_{\bt}$ with sides labeled $w$ and $v$, and sharing a side labeled $w$ with the other square.
    There is an induced path $u\edge w\edge v$ in $\Gamma$. 
    Proceed just as in the previous case using cycles containing $u\edge w\edge v$.
              \end{enumerate}

  In this way we construct $n$ distinct strips
  $S_{(\bt,1)},\dots,S_{(\bt,n)}$, each homeomorphic to a disk, each
  intersecting the previous subcomplex only in $E_{\bt}$, and, by construction, such that
  the new outer boundary $E_{(\bt,i)}$ of strip $S_{(\bt,i)}$ has the
  property that the link of the first vertex is an edge, and the link of each subsequent vertex is either an edge or an
  induced path of  length 2.
Let $D_{(\bt,i)}:=D_{\bt}\cup S_{(\bt,i)}$ 

The description thus far applies to all values of $\bt$.  When $\bt = ()$, let 
$A_1:=A_0\cup\bigcup_{1\leq i\leq n}S_{(\bt,i)}$.
The outer boundary of $A_1$ is $n$--many disjoint arcs $E_{\bt}$ for $\bt\in
T^1$, and $\Lk_{D_{\bt}}(v)$ is an induced path of length one  or two for each $\bt$ and each $v\in E_{\bt}$ (and always of length one for the first vertex of $E_{\bt}$).

Repeat this construction inductively: having constructed $A_j$ whose
outer boundary consists of disjoint arcs $E_{\bt}$ for $\bt\in T^{j-1}$, use the above procedure to build $n$ new strips $S_{(\bt,i)}$ attached along $E_{\bt}$ and take
$D_{(\bt,i)}:=D_{\bt}\cup S_{(\bt,i)}$ and $A_{j+1}:=A_j\cup\bigcup_{1\leq
  i\leq n}S_{(\bt,i)}$.

We have constructed $A=\bigcup  A_j$ that is a combinatorial round
tree with vertical branching~$n$.
To see the horizontal branching is $3m-7$, consider a square $Q$
in $A_j$ that intersects an outer edge path $E_{\bt}$.
By construction, $Q\cap E_{\bt}$ is a segment of length either 1 or 2.
If $Q\cap E_{\bt}$ is a single edge then each of its vertices has link
in $D_{\bt}$ either a single edge, if it is on the boundary of
$E_{\bt}$, or 2 edges, otherwise, so it meets either 1 or at most
$m-2$ squares from each new strip, respectively.
The two vertices of $Q\cap E_{\bt}$ have one square in each strip that
is a common acquaintance, so $Q$ meets at most $2m-5$ squares from each new strip. 
If $Q\cap E_{\bt}$ has length 2, the middle vertex of this segment
meets at most $m-1$ squares from each new strip and the others meet at
most $m-2$.
There are two squares that were double counted, so $Q$ meets at most
$3m-7$ new squares from each new strip.
    \end{construction}

\section{Pontryagin sphere boundaries}\label{sec:Pontryagin}
%\section{Pontryagin sphere boundaries}

\begin{thm}\label{thm:Psphere_lower_bound}
There exists a family of hyperbolic right-angled Coxeter groups $W_i$ with Pontryagin sphere boundary and with conformal dimension tending to infinity as $i \rightarrow \infty$. 
\end{thm}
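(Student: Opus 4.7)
The plan is to combine three ingredients: the conformal dimension lower bound from Theorem~\ref{thm:confdim_lower_bounds}, the embedding theorem announced in the abstract (to be proved in this section) realizing a flag-no-square graph as an induced subgraph of a flag-no-square triangulation of a closed orientable surface, and Fischer's theorem identifying the boundary of the associated Davis complex with the Pontryagin sphere.

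First, I would invoke \Cref{sec:branching_from_projective_planes} to obtain a sequence of graphs $\Gamma_i$ with $(n_i,m)$-branching for fixed $m\geq 5$ and $n_i\to\infty$. By \Cref{lem:branching_implies_high_girth} each $\Gamma_i$ has girth at least $5$, so in particular it is flag-no-square. \Cref{thm:confdim_lower_bounds} then yields that $W_{\Gamma_i}$ is hyperbolic and
\[
\Confdim(\partial W_{\Gamma_i}) \;\geq\; 1 + \frac{\log n_i}{\log(3m-7)} \;\longrightarrow\; \infty.
\]

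Next, apply the embedding theorem to each $\Gamma_i$ to obtain a flag-no-square triangulation $L_i$ of some closed orientable surface of genus at least $1$ containing $\Gamma_i$ as an induced subgraph of its $1$-skeleton. Since $L_i$ has no induced square, \Cref{lemma:RACG_props}(1) gives that $W_{L_i}$ is hyperbolic, and Fischer's theorem identifies $\partial W_{L_i}$ with the Pontryagin sphere $\mathcal{P}$. To transfer the conformal dimension estimate, note that by \Cref{lemma:RACG_props}(2) the inclusion $\Gamma_i\hookrightarrow L_i$ realizes $X_{\Gamma_i}$ as a convex subcomplex of $X_{L_i}$, hence an isometric embedding of hyperbolic spaces, which by \Cref{thm_qs} extends to a quasisymmetric embedding $\partial W_{\Gamma_i}\hookrightarrow \partial W_{L_i}$. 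Conformal dimension is monotone under quasisymmetric embedding: any quasisymmetric realization $Z$ of the target restricts on the image of the source to a space quasisymmetric to $\partial W_{\Gamma_i}$ with Hausdorff dimension at most $\Hdim(Z)$. Therefore $\Confdim(\partial W_{L_i})\geq \Confdim(\partial W_{\Gamma_i})\to\infty$, and setting $W_i:=W_{L_i}$ produces the desired family; distinct conformal dimensions automatically force distinct quasi-isometry classes by \Cref{thm_qs}.

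The main obstacle is step two, namely the embedding theorem itself. Starting from a flag-no-square graph $\Gamma_i$ whose valence and cycle structure are forced by the branching condition to be rich (and whose ``surface genus'' already must grow superlinearly in $n_i$ by \Cref{lem:branching_implies_nonplanar}), one must adjoin additional vertices, edges, and triangles in a controlled fashion to realize it inside a flag triangulation of a closed orientable surface without inadvertently creating any induced 4-cycle. Establishing such a combinatorial-topological extension under only mild hypotheses on the input graph is the technical heart of this section, and is the result that makes the argument above go through.
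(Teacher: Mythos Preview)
Your outline is essentially the paper's own proof: branching graphs from \Cref{sec:branching_from_projective_planes}, the embedding lemma of this section, Fischer's theorem, and quasi-convexity to compare conformal dimensions.

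One point needs tightening. You assert that the embedding theorem yields a flag-no-square triangulation of a surface \emph{of genus at least $1$}, but the lemma (\Cref{lem:rel_Dranishnikov_for_graph}) only promises a closed orientable surface; it could in principle be $S^2$, and in that case Fischer's theorem gives $\partial W_{L_i}\cong S^2$, not the Pontryagin sphere. The paper closes this gap by requiring $n_i\geq 3$ and invoking \Cref{lem:branching_implies_nonplanar} to conclude $\Gamma_i$ is nonplanar, which forces the ambient surface to have positive genus. You mention that lemma only as motivation in your final paragraph; move it into the argument proper, right where you assert genus $\geq 1$.
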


To prove the theorem we use a relative version of a construction of Dranishnikov~\cite{MR1684267}.

\begin{lemma}\label{lem:general_relative_Dranishnikov}
Suppose that $L$ is a flag-no-square subcomplex of a 2-dimensional
simplicial complex $X$.
Then there is a subdivision $X'$ of $X$ that is flag-no-square and contains $L$ as a full subcomplex.
\end{lemma}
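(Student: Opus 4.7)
My plan is to construct $X'$ as a subdivision of $X$ that leaves every simplex of $L$ untouched while refining the complement $X\setminus L$ finely enough to kill induced $4$-cycles, following the spirit of Dranishnikov's flag-no-square construction~\cite{MR1684267} in a relative form.

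Since $X$ is $2$-dimensional, the simplices to refine are edges and $2$-simplices. For each edge $e$ of $X$ not in $L$, insert $N$ new interior vertices, subdividing $e$ into $N+1$ short sub-edges. For each $2$-simplex $\tau$ of $X$ not in $L$, choose a fine triangulation of $\tau$ whose restriction to each boundary edge agrees with the chosen subdivision of that edge (the identity on edges lying in $L$, and the $(N+1)$-fold subdivision on others). Simplices of $L$ are left alone. The parameter $N$ is to be chosen large relative to the local combinatorics of $X$ near $L$.

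With this construction $L$ is plainly a subcomplex of $X'$. To see $L$ is full in $X'$, observe that no new edge or $2$-simplex of $X'$ has all of its vertices in $V(L)$: any edge of $X$ joining two $L$-vertices but not itself in $L$ has been subdivided, so its endpoints are no longer directly connected in $X'$; and any $2$-simplex of $X$ whose vertices all lie in $V(L)$ but which is not in $L$ must have a subdivided edge, since otherwise all three of its edges would lie in $L$ and flagness of $L$ would force the $2$-simplex into $L$. Flagness of $X'$ itself reduces to a local check on each refined simplex, arranged by taking the fine triangulation to be flag (for example, a standard $N$-fold subtriangulation of a triangle).

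The principal obstacle is verifying that $X'$ has no induced $4$-cycle. A $4$-cycle with all vertices in $V(L)$ contradicts the hypothesis on $L$ together with the fullness just established. A $4$-cycle containing at least one new vertex must travel through the interior of one or more non-$L$ simplices and possibly cross the interface with $L$. The subtle case is a cycle of the form $c_1, p, c_2, q$, where $c_1$ and $c_2$ are interior vertices of two non-$L$ triangles that share a subdivided edge and $p,q$ are two distinct vertices on that shared edge: if $p$ and $q$ are non-adjacent in the subdivided boundary, such a $4$-cycle has no obvious chord. Avoiding this forces the interior triangulation of each non-$L$ $2$-simplex to be chosen more carefully than a naive cone over the boundary, so that no new vertex is directly connected to non-adjacent boundary vertices. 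With $N$ sufficiently large, this can be achieved by a multilayer (e.g.\ iterated barycentric-like) triangulation of each non-$L$ simplex, matched consistently across shared edges, ensuring that the link of every new vertex is a cycle of length at least $5$ and, more globally, that every closed edge-path of length $4$ in $X'$ has a chord.
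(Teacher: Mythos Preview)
Your outline has the right shape, but the crucial step is not actually carried out. The entire content of the lemma is the construction of a subdivision with no induced $4$-cycles, and at exactly that point you write ``with $N$ sufficiently large, this can be achieved by a multilayer (e.g.\ iterated barycentric-like) triangulation \ldots\ ensuring that \ldots\ every closed edge-path of length $4$ in $X'$ has a chord.'' That is the statement to be proved, not a proof. You correctly identify the dangerous configuration (two refined triangles sharing a subdivided edge, with an interior vertex on each side adjacent to two non-adjacent boundary vertices), but you do not exhibit a triangulation that avoids it, nor do you argue that one exists. Iterated barycentric subdivision, in particular, does \emph{not} yield flag-no-square complexes: already the barycentric subdivision of a single triangle contains induced squares (a corner, the two adjacent edge barycenters, and the face barycenter). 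So ``barycentric-like'' is not a placeholder for a routine construction; it is precisely where the work lies.

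The paper's proof is concrete rather than asymptotic. After a single partial barycentric subdivision to make $X$ flag and $L$ full, each $2$-simplex not contained in $L$ is replaced by one of two fixed, explicit flag-no-square triangulated triangles (one for simplices sharing no edge with $L$, another for simplices sharing exactly one edge with $L$). The verification then proceeds by a short case analysis: an induced $4$-cycle containing an interior vertex of some replaced simplex $\sigma$ must have at least three of its vertices in $\sigma$, and inspection of the two fixed pictures rules this out; an induced $4$-cycle with no interior vertices consists only of original vertices and edge barycenters and is shown to trace a triangle of $X$ not in $L$, which is impossible since such a triangle has a further subdivided edge. The point is that a specific combinatorial gadget, not a large parameter $N$, is what makes the argument go through.
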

\begin{proof}
By taking a partial barycentric subdivision (adding barycenters to edges and faces in $X - L$), we can assume that $X$ is flag and $L$ is a full subcomplex. 
Since $L$ is full in $X$, every $2$-simplex $\sigma$ of $X$ that is
not entirely contained in $L$ intersects $L$ in at most one edge.
Form $X'$ by replacing each such $\sigma$ with one of the simplices in
\Cref{f:dranishnikov}; use the absolute replacement if $\sigma$ does
not contain an edge of $L$, and use relative replacement if it does, where the bottom (red) edge of relative replacement is identified with the edge of $L$.
Since no edges were added with both endpoints in $L$, the subcomplex $L$ remains full in $X'$.
\begin{figure}[h]
  \centering
  \makebox[.5\textwidth][c]{%
  \begin{minipage}[b]{0.4\textwidth}
    \centering
    \begin{tikzpicture}[scale=1.35]
      \begin{scope}[xshift = -1.5cm]
        % Original triangle vertices
        \coordinate (A) at (0,0);
        \coordinate (B) at (2,0);
        \coordinate (C) at (1,1.732); % Equilateral triangle (height = sqrt(3))

        % Draw the original triangle
        \draw[thick] (A) -- (B) -- (C) -- cycle;
        \draw[thick] (.5, .8516) -- (1.5, .8516) -- (1,0) -- cycle;
        \draw[thick] (.75, .4258) -- (1.25, .4258) -- (1,.8516) -- cycle;
        \coordinate (G) at (barycentric cs:A=.5,B=1.5,C=1.2);
        \coordinate (H) at (barycentric cs:A=1,B=1,C=.5);
        \coordinate (I) at (barycentric cs:A=1.5,B=.5,C=1.2);
        \draw[thick] (A) -- (.75, .4258);
        \draw[thick] (B) -- (1.25, .4258);
        \draw[thick] (C) -- (1, .8516);
      \end{scope}  
    \end{tikzpicture}\\[1ex]
     (a) Absolute
   \end{minipage}
   \hfill
  \begin{minipage}[b]{0.4\textwidth}
    \centering
    \begin{tikzpicture}[scale=1.35]
      \coordinate (x) at (90:1.15);
      \coordinate (y) at (-30:1.15);
      \coordinate (z) at (210:1.15);
      \coordinate (xy) at ($(x)!.5!(y)$);
      \coordinate (yz) at ($(y)!.5!(z)+(0,.25)$);
      \coordinate (zx) at ($(x)!.5!(z)$);
      \coordinate (a) at ($(zx)!.5!(xy)$);
      \coordinate (b) at ($(xy)!.5!(yz)$);
      \coordinate (c) at ($(yz)!.5!(zx)$);
      \coordinate (ab) at ($(a)!.5!(b)$);
      \coordinate (bc) at ($(b)!.5!(c)$);
      \coordinate (ca) at ($(c)!.5!(a)$);
      \draw[thick] (x)--(xy)--(y)--(yz)--(z)--(zx)--(x) (x)--(a) (y)--(b) (z)--(c) (a)--(xy)--(b)--(yz)--(c)--(zx)--(a) (zx)--(ca) (xy)--(ab) (yz)--(bc) (a)--(ab)--(b)--(bc)--(c)--(ca)--(a) (ab)--(bc)--(ca)--(ab);
      \draw[thick, red] (y)--(z);
    \end{tikzpicture}\\[1ex]
    (b) Relative
  \end{minipage}
  }
  \caption{The replacement simplices for the absolute and relative
    subdivisions.}\label{f:dranishnikov}
  \figurealttext{Two diagrams showing replacement subdivisions of a triangle. Panel A shows the absolute replacement, in which the whole triangle is subdivided into smaller triangles. Panel B shows the relative replacement, in which the triangle is subdivided while a distinguished boundary edge, drawn in red, is preserved.}
\end{figure}
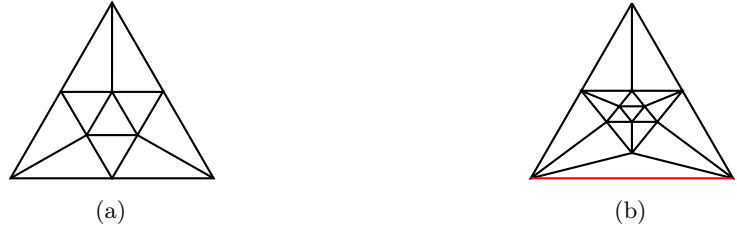

      Suppose $C$ is an induced $4$-cycle in $X'$. Then $C$ is not contained in $L$ since $L$ is flag-no-square. 
      Suppose that $C$ contains a new vertex in the interior of a $2$-simplex $\sigma$ of $X$. Then at least $3$ of the vertices of $C$ are contained in $\sigma$. If all vertices of $C$ are in $\sigma$ we are done since each replacement $2$-simplex is flag-no-square and full in $X'$. 
      Otherwise, by inspection of the picture and since $C$ is induced, $C$ would have to contain two barycenters of edges of $X$. Since these are contained in a unique $2$-simplex of $X$, this is impossible.
      
    Lastly, suppose that $C$ is not contained in $L$ and has no vertices in the interior of a 2-simplex of $X$. 
    Thus, $C$ contains a barycenter of an edge of $X$. Therefore, $C$ viewed as a path in $X$ must be a $3$-cycle. Since $X$ is flag it would span a simplex, which is not contained in $L$ as the edge is subdivided. Since $L$ is full, at least one of the other $2$ edges is not contained in $L$, and hence is subdivided in $X'$. Thus, $C$ would not be a 4-cycle, a contradiction. 
    \end{proof}

\begin{lemma}\label{lem:rel_Dranishnikov_for_graph}
Suppose that $\Gamma$ is a finite, simplicial graph of girth at least 5.
Then there is a flag-no-square triangulation of an orientable closed surface containing $\Gamma$ as an induced subgraph. 
\end{lemma}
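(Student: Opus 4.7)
The plan is to reduce to \Cref{lem:general_relative_Dranishnikov}. Since $\Gamma$ has girth at least $5$, it contains no triangles and no $4$--cycles, so $\Gamma$ viewed as a $1$--dimensional simplicial complex is flag-no-square. It thus suffices to realize $\Gamma$ as a simplicial subcomplex of some $2$--dimensional simplicial complex $X$ that triangulates a closed orientable surface; \Cref{lem:general_relative_Dranishnikov} then yields a flag-no-square subdivision $X'$ of $X$ in which $\Gamma$ is a full subcomplex. Subdivision preserves the underlying topology, so $X'$ still triangulates a closed orientable surface, and fullness of $\Gamma$ in $X'$ is precisely the statement that $\Gamma$ is an induced subgraph of the $1$--skeleton of $X'$.

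To construct $X$, first choose a cellular embedding of $\Gamma$ in some closed orientable surface $\Sigma$ whose complementary regions are all open disks. Such an embedding exists for any finite graph; for a disconnected $\Gamma$ one may use a disjoint union of surfaces or take connected sums. Next, extend this cellular embedding to a simplicial triangulation of $\Sigma$ with $\Gamma$ as a simplicial subcomplex, by subdividing each $2$--cell face using only fresh interior vertices and never placing an edge between two vertices of $V\Gamma$.

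Concretely, for a face $D$ whose boundary walk is a simple cycle $x_1\edge x_2\edge \cdots\edge x_k\edge x_1$ with $k\ge 3$, I would cone $D$ to a new interior vertex $v_D$, producing triangles $(x_i,x_{i+1},v_D)$. When the boundary walk is not simple --- which can happen around pendant edges, or whenever a vertex or edge is revisited on $\partial D$ --- I would insert an inner ring of $k$ new vertices $y_1,\dots,y_k$ (one per boundary edge $e_i$) each connected to the endpoints of $e_i$, to the neighbors $y_{i\pm 1}$ on the ring, and to a central vertex $v_D$. This decomposes $D$ into quadrilateral collar pieces $(x_i,x_{i+1},y_{i+1},y_i)$, each split by a diagonal through a $y$--vertex, together with inner triangles $(y_i,y_{i+1},v_D)$. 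The degenerate case $k=2$, a face traversing a single pendant edge twice, requires a small tweak: bisect $D$ by an interior arc of two new vertices joining its two boundary vertices, and cone each of the resulting quadrilateral halves. Because $\Gamma$ is simplicial (no loops, no multi-edges), each boundary edge has distinct endpoints, each new triangle has three distinct vertices, and every new edge has at least one endpoint outside $V\Gamma$; the resulting complex is simplicial and is a triangulation of $\Sigma$ containing $\Gamma$ as a subcomplex.

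The main obstacle is this second step: producing a genuinely simplicial triangulation of each face without inadvertently creating new adjacencies among vertices of $\Gamma$. Naive coning suffices when the face boundary is a simple cycle of length at least three, but breaks down for non-simple boundary walks, where the inner ring of auxiliary $y$--vertices is needed; the pendant case requires one further tweak. Once $X$ is assembled, \Cref{lem:general_relative_Dranishnikov} supplies the flag-no-square subdivision in which $\Gamma$ is induced, completing the proof.
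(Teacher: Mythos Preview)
Your proof is correct and follows essentially the same route as the paper's: embed $\Gamma$ in a closed orientable surface with disk complementary regions, extend to a simplicial triangulation, then apply \Cref{lem:general_relative_Dranishnikov}. The paper dispatches the triangulation step by citing a general result of Akin, whereas you construct the triangulation explicitly; your extra care to avoid new edges between vertices of $\Gamma$ is harmless but unnecessary, since \Cref{lem:general_relative_Dranishnikov} begins by taking a partial barycentric subdivision that makes $\Gamma$ full regardless.
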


\begin{proof}
Since $\Gamma$ has girth at least $5$, it is flag-no-square as a $1$-dimensional simplicial complex.
By \Cref{lem:general_relative_Dranishnikov}, it therefore suffices to show that $\Gamma$ can be embedded as a subcomplex of some triangulation of a closed orientable surface.
That any graph embeds into an orientable surface is well-known, for example one can start with a general position map of $\Gamma \rightarrow S^2$ and then resolve intersections between edges by gluing in handles. 

That $\Gamma$ can be embedded as a subcomplex of some triangulation is also easy, and follows for example from \cite[Corollary 3, p.468]{akin}, so we will only sketch an argument. If the surface $\Sigma$ is chosen to have minimal genus, then each complementary component of $\Sigma-\Gamma$ is homeomorphic to a $2$-disk, so we can take $\Gamma_i$ to be the 1--skeleton of a finite 2--dimensional CW--complex structure on $\Sigma_i$. Triangulating each $2$-cell finely enough guarantees that the resulting cell complex structure on $\Sigma_i$ is simplicial. 
\end{proof}

\begin{proof}[Proof of \Cref{thm:Psphere_lower_bound}]
There exist, as in \Cref{sec:branching_from_projective_planes},
finite, connected, simplicial graphs $\Gamma_i$ with $(n_i,m)$--branching for $m$ fixed and  $3\leq n_i\to \infty$.
\Cref{lem:branching_implies_high_girth} and \Cref{lem:branching_implies_nonplanar} imply $\Gamma_i$ is nonplanar with girth at least 5.
Let $\Sigma_i$ be a flag-no-square triangulated surface containing $\Gamma_i$ as an induced subgraph of its 1--skeleton $\Lambda_i$, as in \Cref{lem:rel_Dranishnikov_for_graph}.
Since $\Gamma_i$ is nonplanar, $\Sigma_i$ has positive genus.
    
The right-angled Coxeter group with defining graph $\Lambda_i$ is
hyperbolic with Pontryagin sphere boundary, by a theorem of Fischer
\cite{fischer} (see also \cite[Theorem 2]{swiatkowski-trees}), and $W_{\Gamma_i}$ is a quasi-convex subgroup of $W_{\Lambda_i}$, so $\Confdim(\bdry W_{\Lambda_i}) \geq \Confdim(\bdry W_{\Gamma_i}) \geq 1 + \frac{\log n_i}{\log 3m-7}$, by \Cref{thm:confdim_lower_bounds}.
\end{proof}

\section{Examples of graphs with
  \texorpdfstring{$(n,m)$}{(n,m)}--branching}\label{sec:branching_from_projective_planes}

   % \section{Examples of graphs with \texorpdfstring{$(n,m)$}{(n,m)}--branching}\label{sec:branching_from_projective_planes}

In this section we exhibit families of graphs satisfying
\Cref{defn:n6_branching}, so \Cref{thm:confdim_lower_bounds} is not
vacuous.
To mesh well with the branching condition, we want graphs with many
girth-cycles, which suggests candidates related to \emph{edge-girth-regular} graphs \cite{JajKisMik18,AraLee22}.
We discuss two abstract families of graphs, \emph{generalized
  $m$--gons} in \Cref{sec:genm}, and Levi graphs of \emph{transversal
  designs} in \Cref{sec:td}.
Within these abstract families there are some concrete, well-known structures, namely the Levi graphs of the finite projective,
affine, and biaffine planes over the finite field $\F_q$.
In these subclasses we use the explicit nature of the constructions to get
better branching estimates; in fact, in all three cases we find that the branching condition is the best possible, in the sense that the graph satisfies $(n,m)$--branching with $n$ one less than the minimum valence, and $m$ equal to the girth. 
The sole exception is the Fano plane, in which $n$ is two less than the minimum valence.

Many of our examples are Levi graphs of finite incidence
structures.
Such a structure comes with a set of \emph{points}, a set of
\emph{lines}/\emph{blocks}, and an incidence relation saying which
points belong to which lines/blocks. 
The \emph{Levi graph} (or incidence graph) of the incidence structure
has a vertex for each point and line/block, and an edge between two
vertices if they are incident.
This makes a bipartite simple graph.
All of the incidence systems we consider have the property that there
is at most one line/block containing a given pair of points, so the
girth of the Levi graph is at least 6.

We formulate an easy lemma that we use when verifying unions of
cycles are induced subgraphs:
\begin{lemma}\label{lem:usually_induced}
  If $\Gamma$ is a simplicial graph of finite girth $g$ and $A$ and $B$
  are $g$--gons, ie, cycle subgraphs of length $g$,
  with consecutive vertices $a_0,a_1$,\dots, $a_{g-1}$ and $b_0$,
$b_1$,\dots,$b_{g-1}$, respectively, such that $a_i=b_i$ for $0\leq
i\leq k$ for some $1\leq k \leq g-2$ and $a_{k+1}\neq
b_{k+1}$ and $a_{g-1}\neq b_{g-1}$, then the intersection of $A$ and
$B$ is exactly the shared segment $P=a_0\edge\cdots\edge a_k$, and $A\cup B$ is
an induced subgraph unless $k=1$ and $a_i$ is adjacent to $b_j$ for
some choice of $i$ and $j$ with $\{i,j\}=\{\lceil\frac{g+1}{2}\rceil,\lfloor\frac{g+1}{2}\rfloor\}$.
\end{lemma}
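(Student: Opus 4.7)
The plan is to prove the two assertions in the lemma separately: first that $A\cap B=P$, and then that $A\cup B$ is an induced subgraph unless the stated exception occurs.

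To show $A\cap B=P$, I argue by contradiction. Suppose there is a vertex $v=a_i=b_j$ in $(A\cap B)\setminus P$; then $k+1\leq i,j\leq g-1$, and the hypothesis $a_{g-1}\neq b_{g-1}$ rules out $(i,j)=(g-1,g-1)$. Among all such intersection vertices I pick one minimizing $i+j$ and one maximizing it. For the minimizer, the forward paths $a_k,a_{k+1},\dots,a_i$ in $A$ and $b_k,b_{k+1},\dots,b_j$ in $B$ have disjoint interiors, because any interior coincidence $a_\ell=b_m$ would have $\ell<i$ and $m<j$ and thereby contradict minimality. Their union is a simple cycle of length $(i-k)+(j-k)$, so the girth bound gives $i+j\geq g+2k$. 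Symmetrically applied to the backward paths $a_{i'},\dots,a_{g-1},a_0$ and $b_{j'},\dots,b_{g-1},b_0$ for the maximizer, I obtain a simple cycle of length $(g-i')+(g-j')$, hence $i'+j'\leq g$. Since the minimum cannot exceed the maximum, $g+2k\leq g$, contradicting $k\geq 1$.

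For the induced subgraph claim, a hypothetical ``chord'' of $A\cup B$ cannot have both endpoints in a single cycle, since such a chord would short-cut $A$ or $B$ and violate girth. Hence the chord must be of the form $a_ib_j$ with $k+1\leq i,j\leq g-1$. Using Step~1, the paths in $A\setminus P$ and $B\setminus P$ meet only at the endpoints $a_0$ and $a_k$ of $P$, so any closed walk formed from the chord together with subpaths of $A$ and $B$ is automatically simple. The two most direct constructions---forward subpaths in both cycles closed up by the chord, and backward subpaths in both cycles closed up by the chord---yield cycles of lengths $(i-k)+(j-k)+1$ and $(g-i)+(g-j)+1$, which after applying girth give $i+j\geq g+2k-1$ and $i+j\leq g+1$. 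These are incompatible unless $k=1$ and $i+j=g+1$. When $k=1$, two further cycles that traverse the single edge $a_0a_1$ of $P$ contribute the bounds $|i-j|\leq 1$, which together with $i+j=g+1$ pin down the exceptional configuration $\{i,j\}=\{\lceil(g+1)/2\rceil,\lfloor(g+1)/2\rfloor\}$.

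The main obstacle is keeping the bookkeeping honest: each girth inequality arises from a specific closed walk that must genuinely be a simple cycle. The minimality/maximality choices in Step~1, together with the conclusion $A\cap B=P$ that Step~1 provides for Step~2, are precisely what guarantee this simplicity; once those checks are in place, the rest of the argument is a direct translation of the girth condition into linear inequalities on $i$, $j$, $k$, and $g$.
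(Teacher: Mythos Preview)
Your proof is correct and follows essentially the same approach as the paper's: split $A$ and $B$ at the coincidence/chord vertices and apply the girth bound to the resulting closed walks to obtain linear inequalities in $i$, $j$, $k$, $g$. The paper handles the intersection case ($d(a_i,b_j)=0$) and the chord case ($d(a_i,b_j)=1$) simultaneously by writing $A=P+A_1+A_2$ and $B=P+B_1+B_2$ and considering the four loops $\bar A_1+B_1$, $A_2+\bar B_2$, $\bar A_1+\bar P+B_2$, $A_2+P+\bar B_1$, each closed by an edge of length $\epsilon\in\{0,1\}$; the resulting system forces $k=\epsilon=1$, $i+j=g+1$, $|i-j|\leq 1$. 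You instead separate the two cases and, for the intersection case, use the min/max trick on $i+j$ to guarantee that the two relevant loops are genuinely simple. This extra care is the only real difference, and it makes your Step~1 more self-contained than the paper's terse treatment (which implicitly relies on the hypotheses $a_{k+1}\neq b_{k+1}$ and $a_{g-1}\neq b_{g-1}$ to ensure the loops are non-null-homotopic).
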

\begin{proof}
It is immediate from the girth restriction that $A$ and $B$
individually are induced subgraphs. 
Suppose that for some $k<i, j<g$ we have
$d(a_i,b_j)=\epsilon\in\{0,1\}$.
Write $A$ and $B$ as 
concatenations $A=P+A_1+A_2$, where $a_i=A_1\cap A_2$, and
$B=P+B_1+B_2$, where $B_1\cap B_2=b_j$.
Consider the four paths $\bar A_1+B_1$, $A_2+\bar B_2$, $\bar A_1+\bar
P+B_2$, and $A_2+P+\bar B_1$, where overbar denotes the reverse path.
Each of these paths is either closed, if
$\epsilon=0$, or can be closed by adding the single edge from $a_i$ to
$b_j$, if $\epsilon=1$.
This yields four loops, each of which has length at least $g$. So, there are 
four inequalities in terms of $i,j,k,g,\epsilon$ whose only solutions are $k=\epsilon=1$, $i+j=g+1$, and
$|i-j|\leq 1$.
\end{proof}

\subsection{Generalized \texorpdfstring{$m$}{m}--gons}\label{sec:genm}
\begin{defn}
  A \emph{generalized $m$--gon of order $q$}, for $m,q\geq 2$ is a
  bipartite graph of diameter $m$ and girth $2m$ such that every
  vertex has valence at least $q+1$. 
\end{defn}

\begin{prop}\label{prop:branching_for_genm}
  A generalized $m$--gon of order $q$ has $(n,2m)$--branching, where $n=q$ for $m>3$ and $n=\lfloor q+1-\sqrt{q}\rfloor$ for $m=3$.
\end{prop}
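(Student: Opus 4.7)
The plan is to verify both components of $(n,2m)$--branching for a generalized $m$--gon $\Gamma$ of order $q$. The valence condition is immediate since every vertex of $\Gamma$ has valence at least $q+1\geq n+1$ (as $n\leq q$). Since $\Gamma$ is bipartite of girth $2m$, any cycle of length at most $2m$ has length exactly $2m\geq 6\geq 5$, so I will construct the $H_i$ as $2m$--cycles.

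Fix an induced path $P$ of length $k\in\{1,2\}$ with endpoints $u,v$ and a set $\{u_1,\dots,u_n\}\subseteq\Lk(u)\setminus P$. A $2m$--cycle $H_i$ containing $P$, $u_i$, and some $v_i\in\Lk(v)\setminus P$ decomposes as the path $u_i\edge u\edge\cdots\edge v\edge v_i$ of length $k+2$ closed by a return path of length $2m-k-2$ from $v_i$ back to $u_i$ that is internally disjoint from the first part.

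For $m>3$, I would exploit the large girth $2m\geq 8$: any extra edge joining $H_i\setminus P$ to $H_j\setminus P$ would combine with segments of these cycles through $P$ to produce a cycle strictly shorter than $2m$, contradicting the girth. Hence it suffices to achieve vertex-disjointness outside $P$. By distance-regularity of $\Gamma$, any non-backtracking path of length less than $m$ lies on many $2m$--cycles, so an inductive construction succeeds: at each step I choose a new $v_i$ and a compatible return path avoiding the $O(nm)$ previously used vertices, which is dominated by the abundance of available choices, yielding $n=q$ cycles.

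For $m=3$, $\Gamma$ is a projective plane of order $q$. When $k=2$ (with $P=u\edge w\edge v$), a 6--cycle is forced to be $u\edge u_i\edge p_i\edge v_i\edge v\edge w\edge u$ with $p_i=u_i\cap v_i$, and the projective plane axioms preclude cross-cycle extra edges, so any $n\leq q$ distinct $v_i$'s work. When $k=1$ (with $P=u\edge v$), a 6--cycle takes the form $u\edge v\edge v_i\edge \ell_i\edge p_i\edge u_i\edge u$ with $\ell_i=\overline{v_i p_i}$, and avoiding extra edges reduces to the requirement that for every $i\neq j$, the line $\overline{p_i p_j}$ meets $v$ at a point distinct from both $v_i$ and $v_j$. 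I would pick an auxiliary line $L$ of $\Gamma$ not through $u$ and set $p_i:=L\cap u_i$, so that every line $\overline{p_i p_j}$ equals $L$; it then suffices to choose distinct $v_i\in v\setminus\{u,L\cap v\}$, yielding $n\leq q-1\geq\lfloor q+1-\sqrt q\rfloor$ for all $q\geq 2$. The main obstacle is the combinatorial interaction between the $v_i$ and $p_i$ choices in the $k=1$ case; the transversal-line trick bypasses it by collapsing every cross-cycle collinearity constraint onto a single line.
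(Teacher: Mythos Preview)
Your $m>3$, length--1 case contains a genuine error. The claim that ``any extra edge joining $H_i\setminus P$ to $H_j\setminus P$ would produce a cycle strictly shorter than $2m$'' is false when $|P|=1$. Label the vertices of $H_i$ and $H_j$ as $a_0,\dots,a_{2m-1}$ and $b_0,\dots,b_{2m-1}$ with $a_0=b_0=u$, $a_1=b_1=v$. If $a_m$ is adjacent to $b_{m+1}$ (which respects the bipartition), the two resulting closed walks through $u$ and through $v$ both have length exactly $2m$, so the girth is not violated and yet $H_i\cup H_j$ is not induced. This is precisely the exceptional case isolated by \Cref{lem:usually_induced}. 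Consequently, ``vertex-disjointness outside $P$'' is \emph{not} sufficient, and the vague avoidance count (``$O(nm)$ previously used vertices'') does not rule out such midpoint adjacencies. The paper's proof confronts this head-on: it parameterizes the candidates for $H_k$ by a pair (a length--$(m-2)$ path from $u_k$, a choice of $v_{k,j}$), bounds the number of bad candidates for each previously chosen $H_i$ by $(q+2-k)$ using the girth, and shows the resulting polynomial $q^{m-2}(q-x)-x(q+1-x)$ is positive on $[0,q-1]$ when $m>3$. Your sketch is missing exactly this bad-choice count.

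Your $m=3$, length--1 argument, by contrast, is correct and takes a genuinely different route from the paper. The paper uses the same counting scheme as above, which for $m=3$ degenerates to the quadratic $x^2-(2q+1)x+q^2$ and only yields $n=\lfloor q+1-\sqrt{q}\rfloor$. Your transversal-line trick (choose $L$ not through $u$, set $p_i=L\cap u_i$, and pick the $v_i$ from $v\setminus\{u,\,L\cap v\}$) forces every line $\overline{p_ip_j}$ to equal $L$, so the single forbidden value $L\cap v$ is the only obstruction. This gives $(q-1,6)$--branching for \emph{any} generalized $3$--gon, which is at least as strong as the stated $\lfloor q+1-\sqrt{q}\rfloor$ and strictly stronger for $q\geq 5$. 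It is also closer in spirit to the paper's separate, sharper treatment of Desarguesian planes in \Cref{prop:n6_examples}, though your argument uses only the projective-plane axioms. The length--2 cases are fine and match the paper's (``easy'') handling.
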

The result is optimal for $m>3$, since $q$ is one less than the valence. 
It is well-known that generalized 3--gons satisfy the axioms of projective planes. In \Cref{sec:projective} we give optimal bounds for Desarguesian projective planes. 
We have not considered  non-Desarguesian planes. 
\begin{proof}
  Let $\Gamma$ be a generalized $m$--gon of order $q$. Since the girth of $\Gamma$ equals $2m$, every embedded path of length at most $m$ is a geodesic, and it is the unique geodesic between its endpoints if its length is strictly less than $m$. 
  Furthermore, we claim that every embedded path of length at most $m+1$ can be completed to an induced $2m$--gon.  
  Indeed, suppose $x_0$, $x_1$,\dots, $x_i, x_{i+1}$ are consecutive vertices along an embedded edge path in $\Gamma$ for some $i\leq m$. 
  If $i<m$ then $x_{i+1}$ cannot be adjacent to $x_j$ for $j<i$ without violating the girth condition, but $x_{i+1}$ has valence $q+1\geq 3$, so it has some neighbor that we have not visited yet. 
  Thus, it is possible to extend the embedded path, and we may assume $i\geq m$.
  Since $\Gamma$ is bipartite, $d(x_0,x_j)\neq d(x_0,x_{j+1})$ for all $j$. 
  While distances to $x_0$ increase we are following a geodesic segment. 
  The diameter condition says this happens for at most $m$ steps, so there is a first index $j\leq m$  at which $d(x_0,x_{j+1})<d(x_0,x_j)$.  
  Then there is a geodesic from $x_{j+1}$ to $x_0$ of length $j-1$, so it cannot go through $x_j$ or $x_{j-1}$. 
  Thus, there is a loop $\gamma$ containing $x_0$, $x_{j-1}$, $x_j$, and $x_{j+1}$ of length $2j$ that uses the edge $x_{j-1}\edge x_j$ and the edge $x_j\edge x_{j+1}$ only once each.
  Tighten it to an embedded cycle $\gamma'$, which is possible since some edges of $\gamma$ are only used once.
  This contradicts the girth $2m$ condition unless $i=j=m$ and $\gamma=\gamma'$.

    Let $u\edge v$ be an edge of $\Gamma$, and let $u_1$,\dots,$u_{q}$ be neighbors of $u$ distinct from each other and $v$. For any choice of neighbor $v_1$ of $v$, distinct from $u$, there exists an induced $2m$--gon $H_1$ containing the segment $u_1\edge u\edge v\edge v_1$. 

    We proceed by induction. Suppose we have chosen $2m$--gons $H_i$ containing $u_i\edge u\edge v\edge v_i$ with $v_i$ distinct for all $i<k$, such that the $H_i$ pairwise
intersect in the edge $u\edge v$ and their union is induced.
Let $v_{k,1}$,\dots,$v_{k,q+1-k}$ be neighbors of $v$ distinct from
$u$ and the $v_i$ for $i<k$.
There are at least $q^{m-2}$ embedded paths $p$ of length $m-2$ starting from
$u_k$ and not containing $u$.
For each such path and each choice of $v_{k,j}$, there is an embedded path
$(v_{k,j}\edge v\edge u\edge u_k)+p$ of length $m+1$, which can be completed to a
$2m$--gon.
This gives at least $q^{m-2}(q-(k-1))$ potential choices of a $2m$--gon $H_k$ containing
$u_k\edge u\edge v$ and intersecting each $H_i$ only in $u\edge v$.

We now consider when such a choice of $H_k$ as above yields an induced union $\bigcup_{i=1}^k H_i$ by applying \Cref{lem:usually_induced}. 
Suppose $H_i=a_0,\dots,a_{2m-1}$ with $a_0=u$ and $a_1=v$ for some $i<k$. 
Suppose $B=b_0, \ldots, b_{2m-1}$ and $C=c_0, \ldots, c_{2m-1}$ are two different choices of $H_k$ with $b_{2m-1}=c_{2m-1}=u_k$, $b_0=c_0=u$ and $b_1=c_1=v$. 
By \Cref{lem:usually_induced}, to determine if $H_i \cup H_k$ is induced, it suffices to consider vertices adjacent to either $a_m$ or $a_{m-1}$. 
Because the girth of $\Gamma$ is $2m$, the vertex $a_m$ cannot be adjacent to both $b_{m+1}$ and $c_{m+1}$. 
Thus, among the choices for $H_k$ there is at most one that is bad because it contains a vertex adjacent to $a_m$. 
Again, as the girth of $\Gamma$ is $2m$, the vertex $a_{m+1}$ can be adjacent to both $b_m$ and $c_m$ only if $b_2\neq c_2$. 
So, for each of the $q+1-k$ choices of $v_{k,j}$ there is at most one choice of $H_k$ that is bad because it contains a vertex adjacent to $a_{m+1}$. 
Thus, there are at most $(q+1-k)+1$ bad choices of $H_k$ such that $H_i\cup H_k$ is not induced. 
As $i$ varies, this means the number of good choices of  $H_k$ is at least $q^{m-2}(q-(k-1))-(k-1)(q+1-(k-1))$. 
To successfully choose $H_k$ we need this number to be positive.

Define $f(x):=q^{m-2}(q-x)-x(q+1-x)$.
When $m>3$ the function $f$ is decreasing on the interval $[0,q-1]$
and positive at $q-1$, so it is possible to make good choices of $H_k$
as long as $k-1\leq q-1$.
Suppose $m=3$, so that $f(x)=x^2-(2q+1)x+q^2$.
  Then $f$ is positive on the interval $[0,q-\sqrt{q}]$, so it is
  possible to make good choices of $H_k$ as long as $k-1\leq\lfloor q-\sqrt{q}\rfloor$.

  The second case in \Cref{defn:n6_branching}, finding $2m$--gons
  sharing a segment of length 2 and whose union is induced, is easy. 
Consider a segment $u\edge w\edge v$ and any neighbors $u_1$,\dots,$u_q$ of $u$,
distinct from each other and $w$.
Choose neighbors $v_1$,\dots,$v_q$
of $v$, distinct from each other and $w$.
For each $i$, the path $u_i\edge u\edge w\edge v\edge v_i$ has length $4\leq m+1$, so it
can be completed to a $2m$--gon $H_i$.
By construction, $u\edge w\edge v$ is a maximal subsegment in the pairwise
intersection of the $H_i$, so by \Cref{lem:usually_induced} it is the
entire intersection and the union of the $H_i$ is induced.
\end{proof}

\begin{remark}
Generalized $m$--gons have an extensive history. 
Right-angled Coxeter groups with presentation graph a finite
generalized $m$--gon have been studied before.
Bounds and Xie \cite{BoundsXie} proved quasi-isometric rigidity for
such groups.
We refer to their paper for further reference on the history of
generalized $m$--gons.
In particular, finite, thick ($q\geq 2$) generalized $m$--gons exist only for $m\in\{2,3,4,6,8\}$, and for $m > 3$ there are conditions on the vertex degree (though examples of arbitrarily high degree always exist) \cite{feithigman}. 
          
The method of proof for the rigidity theorem of Bounds and Xie is to show that such a Coxeter group has the
structure of a Fuchsian building and then apply a rigidity theorem
of Xie \cite{Xie06} for such buildings.
Bourdon \cite{bourdon} computed the conformal dimension of the
boundary of these buildings, in the constant order case, to be
$1+\frac{\log(q)}{\log(\lambda_m)}$, where $\lambda_m$ is the
exponential growth rate of the underlying Coxeter group of the
building, which depends only on $m$ and is computable by
Floyd and Plotnick's \cite{MR877003} growth series
methods or by Steinberg's formula \cite{MR230728}.
\Cref{prop:branching_for_genm} and \Cref{thm:confdim_lower_bounds}
combine to give a coarser, but elementary, lower bound of (in the
$m>3$ case) $1+\frac{\log(q)}{\log(6m-7)}$, which, in terms of $q$,
has the correct logarithmic growth but smaller coefficient depending
on $m$. 
\end{remark}

\subsubsection{Projective planes}\label{sec:projective}
In this subsection we consider a special case of generalized 3--gons for which we can get a better branching estimate than the one provided by \Cref{prop:branching_for_genm}.

\begin{defn}\label{def:Levi_projective}
        Let $\LeviP_q$ denote the Levi graph for the points and lines of
        the projective plane
        over $\F_q$.
        These can be parameterized as 1 and 2--dimensional vector
        subspaces of the vector space $V=\F_q^3$.
    The incidence relation corresponds to containment of vector subspaces. 
  \end{defn}

    The space $\LeviP_q$ is a $1$-dimensional spherical building of
    type $A_2$ as described by Ronan~\cite[Chapter~1, Example 4]{ronan}. 
    An apartment in $\LeviP_q$ contains six chambers, each homeomorphic to an edge, that are arranged in a circuit. 
    In the projective plane over $\F_q$ each point is incident to $q+1$ lines and each line contains $q+1$ points. 
    In particular, $\LeviP_q$ is $(q+1)$--valent.
   The girth of $\LeviP_q$ is exactly 6, since the apartment gives a
   cycle of length 6, while our general remark on incidence structures
   implies the girth is at least 6.
  The diameter of $\LeviP_q$ is 3, and for every vertex there is some
  vertex at distance 3 from it, since every vertex is at distance 2
  from every other vertex of the same type, but for each line there is
  a plane not containing it, and vice versa.
  Thus $\LeviP_q$ is a generalized 3--gon of order $q$. 

   \begin{remark}
In addition to Bounds and Xie \cite{BoundsXie}, Schesler and Zaremsky
\cite{SchZar23} have considered right-angled Coxeter groups presented by Levi graphs of finite projective planes.
They showed that for all sufficiently large primes $p$ the commutator subgroup of $W_{\LeviP_p}$ algebraically fibers with kernel that is finitely generated but not of type $\mathrm{FP}_2$. 
    \end{remark}

    The group $GL_3(\F_q)$ acts on the vector space $V$ preserving dimension and incidence and can thus be identified with a subgroup of $\Aut(\LeviP_q)$ that preserves vertex type.
    There is also an involution in $\Aut(\LeviP_q)$ that exchanges points and lines, coming from projective duality.

    \begin{prop} \label{prop:n6_examples}
The Levi graph $\LeviP_q$ of the projective plane over $\F_q$ is a finite, simplicial graph with diameter 3 and girth 6.
It has $(1,6)$--branching if $q=2$ and $(q,6)$--branching otherwise.      \end{prop}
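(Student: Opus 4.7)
The proof splits into the structural claims and the branching claims. The structural properties---finite, simplicial, diameter $3$, and girth $6$---have all been established in the discussion preceding the proposition, where $\LeviP_q$ is identified as a generalized $3$-gon of order $q$, so I focus on branching. The case $q = 2$ follows directly from \Cref{prop:branching_for_genm}: a generalized $3$-gon of order $q$ has $(\lfloor q+1-\sqrt{q}\rfloor, 6)$-branching, which evaluates to $(1, 6)$ when $q=2$.

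For $q \geq 3$, the plan is to verify $(q, 6)$-branching by explicit construction. Flag-transitivity of $\mathrm{PGL}_3(\F_q)$ on $\LeviP_q$ together with the point-line duality reduces the verification to two representative induced paths $P$: Case (A), $P = u \edge v$ with $u$ a point and $v$ a line through $u$; and Case (B), $P = u \edge w \edge v$ with $u, v$ points and $w$ a common line. Case (B) should be essentially automatic: for any bijection between the $q$ lines through $u$ other than $w$ and the $q$ lines through $v$ other than $w$, one forms hexagons whose sixth vertex is the corresponding intersection point $u_i \cap v_i$. All pairwise-intersection and inducedness conditions follow from standard incidence-geometric facts, using only that the chosen lines are not equal to $w$ and that two distinct lines through a common point meet only there.

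Case (A) is the heart of the proof. I place $u = [1:0:0]$ and $v = \{z = 0\}$ and identify the affine chart $\{[x:y:1]\}$ with $\F_q^2$, so the $q$ lines through $u$ other than $v$ become the horizontal lines $u_k = \{y = k\}$ for $k \in \F_q$. A hexagon containing $P$ is then determined by an affine line $m$ with $u \notin m$: the remaining vertices are $r = m \cap u_k$ and the point at infinity of $m$, which lies on $v$. My construction chooses $c \in \F_q \setminus \{0,1\}$, which exists precisely because $q \geq 3$, and sets $m_0 = \{x = c\}$ together with $m_k = \{y = kx\}$ for $k \in \F_q^\times$. The points at infinity of the $m_k$ cover $v \setminus \{u\}$ bijectively. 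After unpacking the definition, the only nontrivial branching condition that remains is $r_i \neq m_j \cap u_i$ for $i \neq j$; this reduces to $c \neq 0$ (for pairs $(i,j) = (0, k)$) and $c \neq 1$ (for pairs $(i,j) = (k, 0)$), both guaranteed by the choice of $c$. The main obstacle is finding such a construction uniformly in the characteristic: natural approaches based on polynomial derivatives or tangents to conics succeed in odd characteristic but collapse in characteristic $2$, and the vertical-plus-pencil-through-origin choice for the $m_k$ sidesteps that obstruction.
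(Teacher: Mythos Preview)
Your proof is correct and follows essentially the same route as the paper's. In Case~(A) the paper also builds $q-1$ hexagons from a pencil of non-horizontal lines through a common affine point (they use $(-1,0)$ where you use the origin) and then adds one further hexagon from a vertical line $\{x=k\}$ with $k\notin\{0,-1\}$; your condition $c\notin\{0,1\}$ is the translated version of theirs. Case~(B) and the $q=2$ case are handled in the same spirit as well, with only cosmetic differences in presentation (affine versus homogeneous coordinates, citing \Cref{prop:branching_for_genm} versus checking $\LeviP_2$ directly).
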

\begin{proof}
We have explained diameter and girth. We will check the conditions of \Cref{defn:n6_branching}. 

Fix a basis $e_1 = (1,0,0)$, $e_2 = (0,1,0)$, $e_3 = (0,0,1)$ for $V =
\F_q^3$.

Suppose $P=u\edge w\edge v$ is an induced edge path in $\LeviP_q$. 
Up to the action of $\Aut(\LeviP_q)$, we may assume that $u = \la e_1, e_2 \ra$, $w = \la e_1 \ra$, and $v = \la e_1, e_3 \ra$.
There are $q^2$--many hexagons containing $P$, parameterized by
$(i,j)\in \F_q^2$ as in \Cref{fig:hexagon2}.

\begin{figure}[h]
    \centering
    \begin{tikzpicture}[every label/.append style={overlay}]\small
        \path[use as bounding box] (-1.2,-1.25) rectangle (1.2,1.25);
        \coordinate[label={[label distance=-1pt] 180:$u=\langle e_1,\, e_2\rangle$}] (u) at (-150:1);
       \coordinate[label={[label distance=-1pt] -90:$w=\langle e_1\rangle$}] (w) at (-90:1);
       \coordinate[label={[label distance=-1pt] 0:$v=\langle e_1,\, e_3\rangle$}] (v) at (-30:1);
       \coordinate[label={[label distance=-1pt] 0:$v_j:=\langle je_1+e_3\rangle$}] (vj) at (30:1);
       \coordinate[label={[label distance=-1pt] 90:$x_{i,j}:=\langle ie_1+e_2,\, je_1+e_3\rangle$}] (ijk) at (90:1);
      \coordinate[label={[label distance=-1pt] 180:$u_i:=\langle ie_1+e_2\rangle$}] (ui) at (150:1);
      \draw (u)--(ui)--(ijk)--(vj)--(v)--(w)--(u);
    \end{tikzpicture}
    \caption{Hexagon $H(i,j)$ containing segment $u\edge w\edge v$.}
    \label{fig:hexagon2}
    \figurealttext{A hexagon labeled $H(i,j)$ containing the
      length-two path $P=u\edge w\edge v$. Moving cyclically around
      the hexagon from $u$, the vertices are $u$, $w$, $v$, $v_j$,
      $x_{i,j}$, and $u_i$. The labels are $u=\langle
      e_1,e_2\rangle$,$w=\langle e_1\rangle$, $v=\langle
      e_1,e_3\rangle$, $v_j=\langle je_1+e_3\rangle$, $x_{i,j}=\langle ie_1+e_2,je_1+e_3\rangle$, and $u_i=\langle ie_1+e_2\rangle$.}
  \end{figure}
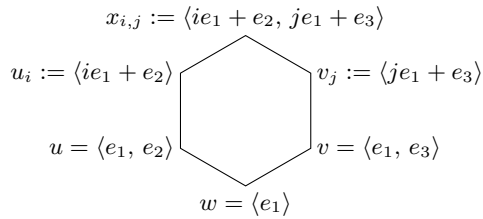

Consider the hexagons $H(i,i)$ for $i\in \F_q$.
Their pairwise intersection is $P$, and   $\bigcup_{i\in\F_q}H(i,i)$ is an induced subgraph by \Cref{lem:usually_induced}. 
        
Now suppose $u\edge v$ is a single edge of $\LeviP_q$.
Up to the action of $\Aut(\LeviP_q)$ we may assume $u = \la e_1 \ra$ and $v = \la e_1, e_2 \ra$.
\Cref{fig:hexagon1} shows distinct hexagons containing this edge, parameterized by $(i,j,k)\in \F_q^3$.

  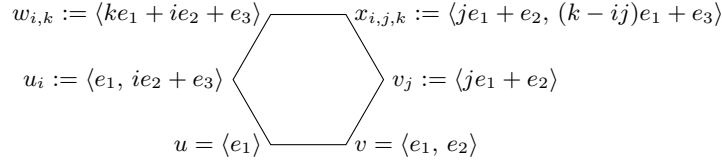
\begin{figure}[h]
    \centering
    \begin{tikzpicture}[every label/.append style={overlay}]\small
      \path[use as bounding box] (-1.2,-1.25) rectangle (1.2,1.25);
      \coordinate[label={[label distance=0pt] 180:$u=\langle e_1\rangle$}] (u) at (-120:1);
      \coordinate[label={[label distance=0pt] 0:$v=\langle e_1,\, e_2\rangle$}] (v) at (-60:1);
      \coordinate[label={[label distance=0pt] 180:$u_i:=\langle e_1,\, ie_2+e_3\rangle$}] (ui) at (180:1);
      \coordinate[label={[label distance=0pt] 0:$v_j:=\langle je_1+e_2\rangle$}] (vj) at (0:1);
      \coordinate[label={[label distance=0pt] 0:$x_{i,j,k}:=\langle je_1+e_2,\, (k-ij)e_1+e_3\rangle$}] (ijk) at (60:1);
      \coordinate[label={[label distance=0pt] 180:$w_{i,k}:=\langle ke_1+ie_2+e_3\rangle$}] (k) at (120:1);
      \draw (u)--(v)--(vj)--(ijk)--(k)--(ui)--(u);
    \end{tikzpicture}
    \caption{Hexagon $H(i,j,k)$ containing edge $u\edge v$.}
    \label{fig:hexagon1}
    \figurealttext{A hexagon labeled $H(i,j,k)$ containing the edge
  $u\edge v$. Moving cyclically around the hexagon from $u$, the
  vertices are $u$, $v$, $v_j$, $x_{i,j,k}$, $w_{i,k}$, and $u_i$. The
  vertex labels are given by the displayed subspaces: $u=\langle e_1\rangle$, $v=\langle e_1,e_2\rangle$, $v_j=\langle je_1+e_2\rangle$, $x_{i,j,k}=\langle je_1+e_2,(k-ij)e_1+e_3\rangle$, $w_{i,k}=\langle ke_1+ie_2+e_3\rangle$, and $u_i=\langle e_1,ie_2+e_3\rangle$.}
  \end{figure}

Suppose that $i_1\neq i_2$ and $j_1\neq j_2$, and consider
hexagons of the form $H(i_1,j_1,k_1)$ and $H(i_2,j_2,k_2)$.
By \Cref{lem:usually_induced}, the only possibility for $H(i_1,j_1,k_1)\cup H(i_2,j_2,k_2)$ not to be an induced subgraph is if, in the notation of \Cref{fig:hexagon1}, the vertex $w_{i_1,k_1}$ is adjacent to $x_{i_2,j_2,k_2}$ or $w_{i_2,k_2}$ is adjacent to $x_{i_1,j_1,k_1}$.
Each of these possibilities gives a system of linear equations that have solutions exactly when   $k_2=k_1+j_1(i_2-i_1)$ or $k_2=k_1+j_2(i_2-i_1)$.

$\LeviP_2$ is the Heawood graph.
For every pair of hexagons in $\LeviP_2$ sharing a single edge, the union is not an induced subgraph.
        
Suppose $q>2$.
First consider the hexagons $H(i,i^{-1},0)$ for $i\in\F_q^*$.
This is a collection of $q-1$ hexagons with the correct intersections, and their union is induced, since for any $i_1\neq i_2$ we have $j_1=i_1^{-1}\neq j_2=i_2^{-1}$ are non-zero, while $\frac{k_2-k_1}{i_2-i_1}=0$.

Since $q\neq 2$ there exists $k\in\F_q^*\setminus\{-1\}$.
Add the hexagon $H(0,0,k)$ to our collection.
Our conditions imply that the union is still induced unless $k=0$ or there is an $i\in \F_q^*$ such that $k=-ii^{-1}=-1$, both of which contradict the choice of $k$.
\end{proof}

\begin{corollary}\label{prop:levi_projective_plane_menger}
$W_{\LeviP_q}$ is hyperbolic and $\bdry W_{\LeviP_q}$ is the
Menger curve, with conformal dimension bounded below by a function
that goes to infinity with $q$. 
\end{corollary}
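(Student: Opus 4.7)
The argument is essentially a bookkeeping of the theorems already established in the paper; my plan is to chain together \Cref{prop:n6_examples}, \Cref{prop:branching_usually_implies_menger}, and \Cref{thm:confdim_lower_bounds}.

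First, I would observe that the graph $\LeviP_q$ is connected: it is the Levi graph of the projective plane over $\F_q$, and any two points lie on a common line and any two lines meet in a point, so $\LeviP_q$ has diameter $3$ and is connected. Then I would invoke \Cref{prop:n6_examples} to conclude that $\LeviP_q$ has $(q,6)$--branching when $q \geq 3$ and $(1,6)$--branching when $q=2$. Since $(n,m)$--branching is monotone in $n$ and $m$, this gives $(3,6)$--branching whenever $q\geq 3$.

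Next, the hyperbolicity and boundary identification follow directly. For every $q$, \Cref{prop:branching_usually_implies_menger} applied to $(1,6)$--branching shows that $W_{\LeviP_q}$ is hyperbolic. For $q\geq 3$, the stronger $(3,6)$--branching together with connectedness of $\LeviP_q$ lets \Cref{prop:branching_usually_implies_menger} identify $\bdry W_{\LeviP_q}$ as the Menger curve. The case $q=2$ (the Heawood graph) only supplies $(1,6)$--branching, so to bring it under the same umbrella I would separately note that the Heawood graph is well-known to be $3$--connected and to have genus $1$, hence inseparable and non-planar; then the same characterization \cite{MR4563333} and related results invoked in \Cref{prop:branching_usually_implies_menger} still force $\bdry W_{\LeviP_q}$ to be the Menger curve. (If the authors prefer, the corollary can be read as a statement only for $q\geq 3$, in which case this aside is unnecessary.)

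Finally, the lower bound on conformal dimension is an immediate application of \Cref{thm:confdim_lower_bounds}: for $q\geq 3$, the $(q,6)$--branching yields
\[
\Confdim(\bdry W_{\LeviP_q}) \;\geq\; 1 + \frac{\log q}{\log 11},
\]
which is a function of $q$ that tends to infinity as $q\to \infty$ through prime powers. There is no genuine obstacle here; the one tiny point of care is that the branching parameter $n$ must grow with $q$ in order for the bound to be nontrivial, and this is precisely the content of \Cref{prop:n6_examples} beyond the regime $q=2$.
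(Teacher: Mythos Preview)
Your argument is correct and matches the paper's proof almost exactly: chain \Cref{prop:n6_examples}, \Cref{prop:branching_usually_implies_menger}, and \Cref{thm:confdim_lower_bounds}, and treat $q=2$ separately by noting that the Heawood graph is inseparable and non-planar. The only imprecision is in your justification for $q=2$: 3-connectedness does not by itself imply inseparability, since one must also rule out separating 2-paths (which have three vertices); the paper simply asserts the Heawood graph is inseparable, and you should either do the same or verify this extra condition directly.
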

\begin{proof}
This follows from \Cref{prop:n6_examples},
\Cref{prop:branching_usually_implies_menger}, and
\Cref{thm:confdim_lower_bounds}, except that for $q=2$ the branching
condition is not strong enough to deduce inseparability and nonplanarity.
However, $\LeviP_2$ is the Heawood graph, which is inseparable and nonplanar.
\end{proof}

\subsection{Transversal designs}\label{sec:td}
\begin{defn}
  A \emph{transversal design} $\td(t,m)$ consists of $tm$--many \emph{Points}
  partitioned into $t$--many \emph{Parts} each of size $m$ and a collection of
  subsets of Points, called \emph{Blocks}, such that each Block
  consists of exactly one Point from each Part and every pair of
  Points from distinct Parts belongs to exactly one Block.
  The Points and Blocks give an incidence structure with respect to
  containment. 
\end{defn}

Transversal designs start to get interesting for us when
$t,m\geq 3$.
There do not exist transversal designs for every choice of parameters
$t,m$, but there are examples known for $\td(3,m)$
(\emph{Latin squares}) and for $\td(t,q)$ with $t\leq q+1$, where $q$ is a prime power.
See \Cref{sec:affine} for existence of $\td(q+1,q)$. The existence of
$\td(t,q)$ for $t\leq q$ follows from the equivalence between $\td(t,q)$ and a collection of $t-2$ mutually orthogonal Latin squares \cite{MR890103}.
It is not hard to show that, for $t,m>1$, a necessary
condition for the existence of a $\td(t,m)$ is given by $t\leq m+1$. 
The parameters $t,m$ do not, in general, determine a unique
transversal design nor a unique isomorphism type of Levi graph.

As shown in \Cref{prop:branching_for_td}, the Levi graph of a transversal design with $t,m \geq 3$ has girth 6 and diameter 4. Since the girth is not twice the diameter, the Levi graph of $\td(t,m)$ is not a generalized $n$--gon, so these yield different examples than the ones in \Cref{sec:genm}. 

\begin{prop}\label{prop:branching_for_td}
  For $t,m\geq 3$, if there exists a transversal design $\td(t,m)$
  then its Levi graph $\LeviT_{t,m}$ is a finite, simplicial graph
  with diameter 4, girth 6, and $(\max\{2,n\},6)$--branching for $n:=\min\{t,m\}-2$. 
\end{prop}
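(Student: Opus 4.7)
The plan is to verify the three claims—girth $6$, diameter $4$, and $(\max\{2,n\},6)$-branching for $n=\min\{t,m\}-2$—separately, with branching being the bulk of the work.

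Girth and diameter come from the transversal design axioms. Since any two Points in distinct Parts lie on a unique Block, the graph has no $4$-cycle, so the girth is at least $6$, and a hexagon is produced by choosing Points $p_1,p_2,p_3$ in three distinct Parts (using $t\geq 3$) with the Block through $p_1,p_2$ not passing through $p_3$ (using $m\geq 2$), which forces the three pairwise Blocks to be distinct. For diameter, a case analysis by vertex type gives: Points in distinct Parts have distance $2$; Points in the same Part have no common Block, so bipartiteness forces their distance to be exactly $4$; a Point and a non-incident Block have distance $3$ via the Block joining the Point to a Point of the given Block lying in a different Part; two Blocks have distance at most $4$.

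For the branching condition, girth $6$ makes every hexagon automatically induced, and \Cref{lem:usually_induced} reduces the induced-union condition for two hexagons sharing a subpath to ruling out a single ``opposite-middle'' chord. For an edge $P=p\edge B$ with $p\in P_1$, given Blocks $B_1,\dots,B_n$ through $p$, I set $p_i$ to be the Point of $B$ in Part $P_{i+2}$, $q_i$ to be the Point of $B_i$ in Part $P_2$, and $C_i$ the Block through $p_i,q_i$; the hexagon $H_i=p\edge B\edge p_i\edge C_i\edge q_i\edge B_i\edge p$ is induced, and the only candidate chord $q_i\in C_j$ would require $q_i=q_j$ since $C_j\cap P_2=\{q_j\}$, which is blocked by $B_i\cap B_j=\{p\}$. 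This works whenever $n+2\leq t$, and the reversed orientation (given Points $p_i$ in $B$, produce Blocks $B_i$ through $p$) uses the same trick of fixing the common Part of the $q_i$. For a $2$-path $P=p_0\edge B\edge p_1$ with Point endpoints in distinct Parts, I pick $q_i\in B_i\cap P_3$ and let $B'_i$ be the Block through $p_1,q_i$; the same chord analysis requires only $t\geq 3$. The $2$-path case with Block endpoints is similar but needs a small injection from indices to Parts rather than a single fixed Part, since distinct output Points in one Block cannot share a Part.

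This yields $(n,6)$-branching for every $n\leq\min\{t,m\}-2$ when $\min\{t,m\}\geq 4$. The boundary case $\min\{t,m\}=3$, where $\min\{t,m\}-2=1$ but the proposition still demands $(2,6)$-branching, needs direct verification. Here the ``common Part'' trick is not available because there are too few Parts, but with only three Parts the choices of $p_i$ and $q_i$ in the edge case are essentially forced, and the candidate bad coincidence $q_i\in C_j$ reduces to $q_i=p_j$; this would place a second Point of $B_i$ on $B$, contradicting $B_i\cap B=\{p\}$. I expect this boundary case to be the main subtlety of the proof, with the rest being mechanical bookkeeping driven by \Cref{lem:usually_induced} and the uniqueness of Blocks through pairs of Points from distinct Parts.
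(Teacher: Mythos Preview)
Your proposal is correct and follows essentially the same strategy as the paper's proof: both verify girth and diameter directly, build hexagons in the edge case by placing the ``opposite'' Point $q_i$ in a single auxiliary Part (so the Lemma~6.1 chord $q_j\in C_i$ forces $q_j=q_i$), fall back on a criss-cross construction using only three Parts when $t=3$ (the paper's $n\le 2$ sub-case), and handle the Block--Point--Block $2$-path via a fixed-point-free permutation of Parts. The only organizational difference is that the paper treats both endpoints of the edge as simultaneously arbitrary (proving a slightly stronger statement that covers both orientations at once), whereas you separate the two orientations; this is cosmetic.
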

\begin{proof}
    We first verify the girth and diameter conditions. The diameter of $\LeviT_{t,m}$ is at least 4, because Points from the same Part are not contained in a common Block, so their distance is greater than two, and their distance is even because the graph is bipartite. 
    The distance between a Point $x$ and a Block $B$ is 1 if $x\in B$ and 3 if $x\notin B$ (in the latter case, a length 3 path from $B$ to $x$ is obtained by stepping from $B$ to a Point $y\in B$ from a different part than $x$, then to the unique Block containing both $x$ and $y$, then to $x$). As every Point is 1 away from some Block and vice versa, it follows that the distance between a pair of Points or a pair of Blocks is at most 4.  Thus, the diameter is exactly 4. 
    The girth is at least 6 because it is even, but it cannot be 4 because there is at most one Block containing any pair of Points. The girth then equals 6 because, as we will see, there are lots of hexagons.
    
    We now verify the branching conditions of \Cref{defn:n6_branching}. Suppose the Points are numbered $(i,j)$ for $0\leq i\leq t-1$ and $0\leq j\leq m-1$, where the partition is by the value of the first coordinate.
    Each Point is contained in $m$ Blocks, and each Block contains $t$
    Points, so the valence of each vertex of $\cT_{t,m}$ is at least $\min\{t,m\} \geq n+1$.  

    First suppose the induced path $P$ is an edge in $\LeviT_{t,m}$. Up to renumbering, assume it consists of Point $(0,0)$ and Block $B_0$. The argument has separate cases for $n\leq 2$ and $n>2$. 
    
    Suppose $n\leq 2$. The Block $B_0$ contains one Point from each Part.
  Choose any two of its neighbors different from $(0,0)$.
  Up to renumbering, assume they are $(1,0)$ and $(2,0)$. The Point $(0,0)$ belongs to $m$ Blocks. Choose any two of its neighbors different from $B_0$, and call them $B_1$ and $B_2$.

Each Block contains one Point from each part.
Up to renumbering, we may assume the Point from part 2 in $B_1$ is
$(2,1)$, and the Point from part 1 in $B_2$ is $(1,1)$.
There is a unique Block $B_1'$ containing Points $(1,0)$ and $(2,1)$,
and there is a unique Block $B_2'$ containing Points $(1,1)$ and
$(2,0)$.
This gives a pair of hexagons sharing the base edge as in
\Cref{fig:two_hex_for_latin_squares}.
Their union is induced because $B_1'$ contains Point $(1,0)$ from Part
1, so it cannot contain $(1,1)$, and similarly $B_2'$ contains $(2,0)$
from Part 2, so it cannot contain $(2,1)$. This yields the two desired cycles, pictured in \Cref{fig:two_hex_for_latin_squares}.

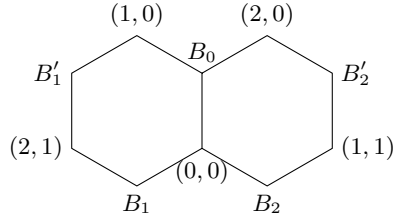
\begin{figure}[h]
  \centering
  \begin{tikzpicture}[rotate=90]\small
             \coordinate[label={[label distance=1pt] -90:$(0,0)$}] (u) at (0,0);
            \coordinate[label={[label distance=1pt] 90:$B_0$}] (v) at (1,0);
            \coordinate[label={[label distance=0pt] -90:$B_1$}] (u1) at (-.5,{sqrt(3)/2});
            \coordinate[label={[label distance=0pt] 90:$(1,0)$}] (v1) at (1.5,{sqrt(3)/2});
            \coordinate[label={[label distance=0pt] 180:$(2,1)$}] (w1) at (0,{sqrt(3)});
            \coordinate[label={[label distance=0pt] 180:$B_1'$}] (x1)
            at (1,{sqrt(3)});
            \coordinate[label={[label distance=0pt] -90:$B_2$}] (u2) at (-.5,{-sqrt(3)/2});
            \coordinate[label={[label distance=0pt] 90:$(2,0)$}] (v2) at (1.5,{-sqrt(3)/2});
            \coordinate[label={[label distance=0pt] 0:$(1,1)$}] (w2) at (0,{-sqrt(3)});
            \coordinate[label={[label distance=0pt] 0:$B_2'$}] (x2) at (1,{-sqrt(3)});
            \draw (u)--(u1)--(w1)--(x1)--(v1)--(v)--(u)--(u2)--(w2)--(x2)--(v2)--(v);
          \end{tikzpicture}
  \caption{A pair of independent hexagons in $\LeviT_{t,m}$ sharing one edge.}
  \label{fig:two_hex_for_latin_squares}
  \figurealttext{Two hexagons in the Levi graph of $\td(t,m)$ sharing the base edge between the Point $(0,0)$ and the Block $B_0$. One hexagon passes through $B_1$, $(2,1)$, $B_1'$, and $(1,0)$ before returning to $B_0$. The other passes through $B_2$, $(1,1)$, $B_2'$, and $(2,0)$ before returning to $B_0$. The figure illustrates two induced hexagons sharing exactly one edge.}
\end{figure}

When $n>2$ we make  different choices.
Block $B_0$ has $t$--many neighbors, one Point from each of the Parts.
Choose any $n<t-1$ of them, distinct from each other and from
$(0,0)$, and assume, up to renumbering that they are
$(1,0)$,\dots,$(n,0)$. The Point $(0,0)$ belongs to $m$--many Blocks. Choose any $n<m-1$ of them, distinct from each other and from $B_0$, and call them
$B_1$,\dots,$B_n$. 
For each $1\leq i\leq n$, suppose that Point $(n+1,i)$ is the Point
from Part $n+1$ contained in $B_i$, and let $B_i'$ be the unique Block containing $(n+1,i)$
and $(i,0)$.
Let $H_i$ be the hexagon with vertex set
$\{(0,0),B_i,(n+1,i),B_i',(i,0),B_0,(0,0)\}$, as in
\Cref{fig:td_hexagons}.

\begin{figure}[h]
  \centering
  \begin{tikzpicture}\tiny
             \coordinate[label={[label distance=0pt,xshift=-4pt] -45:$(0,0)$}] (u) at (0,0);
            \coordinate[label={[label distance=0pt,xshift=3pt] -135:$B_0$}] (v) at (1,0);
            \coordinate[label={[label distance=0pt] 180:$B_1$}] (u1) at (-.5,{sqrt(3)/2});
            \coordinate[label={[label distance=0pt,xshift=1pt] 180:$(1,0)$}] (v1) at (1.5,{sqrt(3)/2});
            \coordinate[label={[label distance=0pt] 180:$(n+1,1)$}] (w1) at (0,{sqrt(3)});
            \coordinate[label={[label distance=0pt] 0:$B_1'$}] (x1)
            at (1,{sqrt(3)});
            \coordinate[label={[label distance=0pt] 180:$B_2$}] (u2) at (.25,{-.25+sqrt(3)/2});
            \coordinate[label={[label distance=0pt] 0:$(2,0)$}] (v2) at (2,{-.25+sqrt(3)/2});
            \coordinate[label={[label distance=-4pt] 135:$(n+1,2)$}] (w2) at (1,{sqrt(3)-.5});
            \coordinate[label={[label distance=0pt] 0:$B_2'$}] (x2) at
            (2,{sqrt(3)-.5});
            \coordinate[label={[label distance=0pt] 180:$B_n$}] (un) at (-.5,{-sqrt(3)/2});
            \coordinate[label={[label distance=0pt] 0:$(n,0)$}] (vn) at (1.5,{-sqrt(3)/2});
            \coordinate[label={[label distance=0pt] 180:$(n+1,n)$}] (wn) at (0,{-sqrt(3)});
            \coordinate[label={[label distance=0pt] 0:$B_n'$}] (xn) at
            (1,{-sqrt(3)});
            \coordinate[label={[label distance=0pt] 180:$B_{n+1}$}] (unp) at (-.75,0);
            \coordinate[label={[label distance=0pt] 0:$(n+1,0)$}] (vnp) at (1.75,0);
            \draw (u)--(u1)--(w1)--(x1)--(v1)--(v)--(u);
            \draw[ultra thick,white] (w2)--(x2);
            \draw (u)--(u2)--(w2)--(x2)--(v2)--(v)--(vn)--(xn)--(wn)--(un)--(u);
            \draw (u)--(unp) (v)--(vnp);
          \end{tikzpicture}
  \caption{$n$--many independent hexagons in $\LeviT_{t,m}$ when $n>2$.}
  \label{fig:td_hexagons}
  \figurealttext{Diagram of $n$ independent hexagons in the Levi graph
    of $\td(t,m)$ sharing the base edge between the Point $(0,0)$ and the Block $B_0$. The hexagon $H_i$ passes through $B_i$, $(n+1,i)$, $B_i'$, and $(i,0)$ before returning to $B_0$. The figure shows representative hexagons $H_1$, $H_2$, and $H_n$, together with additional unused neighbors $B_{n+1}$ and $(n+1,0)$, illustrating that the chosen hexagons intersect exactly in the base edge.}
\end{figure}

By construction, the pairwise intersection of distinct hexagons from this
collection is exactly the edge $\{(0,0),B_0\}$.
To see that their union is induced, observe that $(n+1,i)$ is the only
point from part $n+1$ in $B_i'$, so there is no edge from $(n+1,j)$ to
$B_i'$ when $i\neq j$. Thus, there exist $\max\{2,n\}$--many hexagons that pairwise
intersect in the edge $\{(0,0),B_0\}$ and whose union is an induced
subgraph.

    Now suppose the induced path $P$ has length two. This case is easier: the union is always induced, by \Cref{lem:usually_induced}, so one only needs to find enough hexagons.
    This is easy because any choice of 3 Points from distinct Parts are either contained in a single Block or yield a unique hexagon. 
     So if $P$  is of the form    Point-Block-Point, then we can assume, up to renumbering, that it is $(1,0)\edge B_0\edge (2,0)$, and that the Point in $B_0$ from Part 3 is $(3, 0)$.  So $B_0$ does not contain the Points 
 $(3, 1), \dots, (3, m-1)$, and together with $(1, 0)$ and $(2,0)$, these form $m-1 \ge \max\{2, n\}$ distinct hexagons whose pairwise intersection is $P$.  
 If the segment is of the form Block-Point-Block, we may assume, up to renumbering, that it is $B_1\edge (0,0)\edge B_2$, and that for $1 \le i \le m-1$,  the points in $B_1$ and $B_2$ from Part $i$ are $(i, 1)$ and $(i,2)$ respectively.  
 Then for any choice of fixed-point-free permutation $\sigma$ of $\{1,\dots,m-1\}$ we get $m-1$ distinct hexagons pairwise intersecting in $P$ by considering the triples $(0, 0), (i, 1), (\sigma(i), 2)$.
\end{proof}

\subsubsection{Affine planes}\label{sec:affine}
\begin{defn}
  The \emph{affine plane over $\F_q$} consists of \emph{points}
  $(x,y)$ for $x,y\in\F_q$ and lines:
  \begin{itemize}
      \item $[m,b]$ for $m,b\in\F_q$. \quad(slope $m$ lines)
      \item $[\infty,a]$ for $a\in\F_q$. \quad(vertical/slope $\infty$ lines)
  \end{itemize}
  Point $(x,y)$ lies on a vertical line $[\infty,a]$ if and only if $x=a$. Point $(x,y)$ lies on a non-vertical line $[m,b]$ if and only if $y=mx+b$.
 Let $\LeviA_q$ be the Levi graph of this incidence structure.
\end{defn}

$\LeviA_q$ has $q^2$ points, each contained in $(q+1)$--many lines, and $(q^2+q)$ lines, each containing $q$--many points. 
Every pair of distinct points is contained in a common line, but
distinct lines with the same slope do not intersect.

The affine plane can alternatively be thought of as a sub-incidence
structure of the projective plane, with the points at infinity and the
line at infinity removed.

Consider the transversal design whose Points are affine lines,
partitioned into Parts by slope, and taking Blocks to be the affine points.
Each affine point lies on exactly one line of each slope, and every
two lines with different slopes intersect in a unique point. So from
the affine plane over $\F_q$ we get a $\td(q+1,q)$.  Observe that $\LeviA_q$ is the Levi graph of this transversal design. 
From \Cref{prop:branching_for_td} we can deduce that $\LeviA_q$ has
$(\max\{2,q-2\},6)$--branching, but actually we can do better:
\begin{prop}\label{prop:branching_for_affine_plane}
Let $q>2$ be a prime power. The Levi graph $\LeviA_q$ of the affine plane over $\F_q$ is a finite, simplicial graph with diameter 4, girth 6, and $(q-1,6)$--branching. 
\end{prop}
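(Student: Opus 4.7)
The plan is to verify each clause of \Cref{defn:n6_branching} by coordinate computation, leveraging the transitivity of the affine group. First I would observe the structural facts: $\LeviA_q$ is bipartite with girth $6$ (even, at most $6$ by exhibiting a hexagon, at least $6$ since two points lie on a unique line), diameter $4$ (realized by parallel lines), and valence $q+1$ at each point and $q$ at each line, so the minimum-valence hypothesis ($\geq n+1 = q$) holds. The affine group $\mathrm{AGL}_2(\F_q)$ acts transitively on point-line flags, and the flag stabilizer acts $2$-transitively (via a M\"obius action on slopes) on the remaining $q$ lines through the chosen point, which lets me place any induced path $P$ of length $\leq 2$ in standard coordinates.

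For the edge case I take $P = (0,0) \edge [0,0]$. When $u = [0,0]$, the set $\Lk(u)\setminus P$ is forced to be $\{(a,0) : a\in\F_q^*\}$, and I would take
\[
H_i := (0,0) \edge [0,0] \edge (a_i,0) \edge [\infty,a_i] \edge (a_i,1) \edge [a_i^{-1},0] \edge (0,0), \qquad v_i := [a_i^{-1},0].
\]
Each $H_i$ is a chordless hexagon and $H_i \cap H_j = P$ for $i\neq j$. By \Cref{lem:usually_induced}, the only obstruction to $\bigcup H_i$ being induced would be an adjacency from $[\infty,a_i]$ to $(a_j,1)$, which would force $a_i = a_j$, a contradiction. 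When $u = (0,0)$, the set $\Lk(u) \setminus P$ has size $q$, so one line through the origin is omitted; by the flag-stabilizer $2$-transitivity I may assume the omitted line is $[\infty,0]$, and the mirrored construction $v_i := (m_i^{-1},0)$ with analogous hexagons handles this case.

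For the length-2 case, \Cref{lem:usually_induced} makes the induced-union property automatic once the hexagons pairwise intersect exactly in $P$, so only existence and distinctness of the $v_i$'s remain at stake. In the point-line-point subcase $(0,0) \edge [0,0] \edge (1,0)$, a hexagon completing $P$ is determined by a non-$L$ line $u_i$ through $(0,0)$ paired with a non-$L$ line $v_i$ through $(1,0)$ that is not parallel to $u_i$, closed up by the affine point $u_i \cap v_i$; each $u_i$ has exactly one forbidden partner (its unique parallel through $(1,0)$) and distinct $u_i$'s have distinct forbidden partners, so Hall's theorem produces a system of distinct representatives. In the line-point-line subcase $[0,0] \edge (0,0) \edge [\infty,0]$, both link-minus-$P$ sets are forced to be the $q-1$ non-origin points on the respective lines, and any bijection $u_i \leftrightarrow v_i$ closes up to a hexagon via the unique line through $u_i$ and $v_i$; the closing lines are automatically distinct because a line through two points of $[0,0]$ would be $[0,0]$ itself. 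The main subtlety is the symmetry reduction in the point-endpoint edge case; the remaining hexagon constructions reduce to routine explicit incidence checks.
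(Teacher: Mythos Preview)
Your proof is correct. The paper does not actually give a proof of this proposition; it simply says ``We skip this proof since \Cref{prop:n6_examples_biaffine} is very similar,'' so the benchmark for comparison is the biaffine argument. Your approach is in the same spirit---explicit hexagon constructions in coordinates, then an appeal to \Cref{lem:usually_induced}---but adapted to the affine setting in a few ways worth noting.

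First, your edge-case hexagons deliberately route through the vertical lines $[\infty,a_i]$, which are precisely the extra lines the affine plane has over the biaffine plane. This is a clean choice and makes the induced-union check trivial (the obstruction would force $a_i=a_j$). The biaffine-style hexagon of \Cref{fig:biaffine_hexagon} would also work here but requires the auxiliary constraint $x_iy_i\neq 1$ on the indexing; your vertical-line trick avoids that bookkeeping.

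Second, your use of the $2$-transitivity of the flag stabilizer on the remaining $q$ lines through the point is what handles the valence asymmetry (points have degree $q+1$, lines $q$), letting you assume the omitted line is $[\infty,0]$ when $u$ is the point. This is a genuine extra step not needed in the biaffine case, where the graph is vertex-transitive; you identified the right tool.

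Third, invoking Hall's theorem in the point--line--point subcase is correct but heavier than necessary. In the paper's parallel arguments (the transversal-design proof and the biaffine length-$2$ case) one just writes down an explicit bijection; here, for instance, with the omitted line through $(0,0)$ having slope $s$, one could take $v_i$ to be the line through $(1,0)$ of slope $s$ when $u_i$ has slope $\neq s$, and otherwise pair slopes via any fixed-point-free permutation of $\F_q^*\cup\{\infty\}\setminus\{s\}$. Either way works, and Hall is a perfectly valid shortcut.
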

We  skip this proof since  \Cref{prop:n6_examples_biaffine} is very similar. 
There is an analogue of \Cref{prop:levi_projective_plane_menger}:
\begin{corollary}
  $W_{\LeviA_q}$ is hyperbolic.
  Its boundary is a `tree of graphs' if $q=2$. Otherwise, $\bdry
  W_{\LeviA_q}$ is the Menger curve, with conformal dimension bounded below by a function that goes to infinity with $q$.
\end{corollary}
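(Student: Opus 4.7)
The plan is to mirror the proof of Corollary~\ref{prop:levi_projective_plane_menger}. For $q \geq 3$ everything follows from combining Propositions~\ref{prop:branching_for_affine_plane} and \ref{prop:branching_usually_implies_menger} with Theorem~\ref{thm:confdim_lower_bounds}; the case $q = 2$ requires separate analysis since there the branching hypotheses are too weak.

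Hyperbolicity of $W_{\LeviA_q}$ holds for every $q \geq 2$: the girth of $\LeviA_q$ is at least 6 because at most one line passes through any two points, so $\LeviA_q$ has no induced 4-cycle and Lemma~\ref{lemma:RACG_props} applies.

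For $q \geq 3$: Proposition~\ref{prop:branching_for_affine_plane} gives $(q-1, 6)$--branching, hence $(2, 6)$--branching, so Proposition~\ref{prop:branching_usually_implies_menger} identifies $\bdry W_{\LeviA_q}$ as the Sierpinski carpet or the Menger curve according to planarity of $\LeviA_q$. For $q \geq 4$, non-planarity follows from the $(3, 6)$--branching together with Lemma~\ref{lem:branching_implies_nonplanar}. For $q = 3$, I would verify non-planarity directly by Euler's formula: $\LeviA_3$ has $9 + 12 = 21$ vertices and $12 \cdot 3 = 36$ edges, while a planar graph of girth 6 on 21 vertices has at most $\tfrac{6}{4}(21 - 2) = 28.5$ edges. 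Hence $\bdry W_{\LeviA_q}$ is the Menger curve, and Theorem~\ref{thm:confdim_lower_bounds} gives
\[
\Confdim(\bdry W_{\LeviA_q}) \geq 1 + \frac{\log(q-1)}{\log 11},
\]
which tends to infinity as $q \to \infty$.

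For $q = 2$: each line of the affine plane over $\F_2$ contains exactly two of the four points, so $\LeviA_2$ is the subdivision of $K_4$ obtained by inserting a degree-2 vertex on each edge. In contrast to $\LeviP_2$ (the Heawood graph, which is inseparable), $\LeviA_2$ has many cut pairs: each pair of point vertices $\{P_i, P_j\}$ isolates the unique line vertex $L_{ij}$ upon removal. The corresponding subgroup $\langle s_{P_i}, s_{P_j}\rangle \cong \Z/2 * \Z/2$ is infinite virtually cyclic, so $W_{\LeviA_2}$ admits nontrivial visible splittings over virtually cyclic subgroups. The resulting nontrivial JSJ decomposition produces the desired ``tree of graphs'' structure on $\bdry W_{\LeviA_2}$, with each piece corresponding to the limit set of a one-ended vertex group in the decomposition. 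The main obstacle is making this decomposition completely explicit; however, its existence follows from the visible cut-pair structure of $\LeviA_2$.
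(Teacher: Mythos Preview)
Your proposal is correct and follows essentially the same route as the paper: for $q>2$ you combine \Cref{prop:branching_for_affine_plane}, \Cref{prop:branching_usually_implies_menger}, and \Cref{thm:confdim_lower_bounds}, and for $q=2$ you identify $\LeviA_2$ as a subdivided $K_4$ and use its cut pairs to obtain a nontrivial JSJ decomposition. The only differences are cosmetic: the paper dispatches the $q=3$ nonplanarity by noting that $\LeviA_3$ is the Hesse graph, whereas you give a direct Euler-formula count (which is a perfectly good substitute), and the paper cites \cite{HodSwi25} for the `tree of graphs' conclusion in the $q=2$ case rather than sketching the JSJ argument.
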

\begin{proof}
$\LeviA_2$ is a subdivided $K_4$, which is not inseparable: pairs of
essential vertices are cut pairs, so $W_{\LeviA_2}$ has a non-trivial
JSJ decomposition over 2--ended subgroups and its boundary is a tree of graphs as in \cite{HodSwi25}.         
For $q>2$ the result  follows from
\Cref{prop:branching_for_affine_plane},
\Cref{prop:branching_usually_implies_menger},  and
\Cref{thm:confdim_lower_bounds}, except that nonplanarity of $\LeviA_3$ is not forced by branching.
However,  $\LeviA_3$ is the Hesse graph, which is nonplanar.
\end{proof}

\subsubsection{Biaffine planes}\label{sec:biaffine}
\begin{defn}
  The \emph{biaffine plane over $\F_q$} consists of \emph{points}
  $(x,y)$ for $x,y\in\F_q$ and lines $[m,b]$ for $m,b\in\F_q$.
  Point $(x,y)$ lies on line $[m,b]$ if and only if $y=mx+b$.
 Let $\LeviB_q$ be its Levi graph.
\end{defn}

This biaffine plane\footnote{This terminology follows \cite{AraLee22}. There are other incidence
  structures called `the biaffine plane' in the literature.} is the sub-incidence structure obtained by removing the vertical lines from the affine plane. 
Any other pair of points is contained in a line.
Symmetrically, `parallel' lines $[m,b_1]$ and $[m,b_2]$ with $b_1\neq b_2$ have no point
in common, but any other pair of lines intersects in a point. Every vertex of $\LeviB_q$ has valence $q$.

For any choice of $\alpha,\gamma\in \F_q^*$ and $\beta,\delta\in \F_q$
there is a type-preserving automorphism $\phi_{\alpha,\beta,\gamma,\delta}$ of
$\LeviB_q$:
\begin{align*}
  (x,\,y)&\mapsto (\alpha x+\beta,\, \gamma y+\delta)\\
  [m,\,b]&\mapsto [\frac{\gamma m}{\alpha},\,\delta+\gamma
           b-\frac{\beta\gamma m}{\alpha}]
\end{align*}
There is also a type-reversing involution $\iota$:
\begin{align*}
  (x,y)&\mapsto [x,-y]\\
  [m,b]&\mapsto (m,-b)
\end{align*}
These are enough automorphisms to show that $\LeviB_q$ is vertex-transitive.

As in the affine case, we can make a transversal design from the
biaffine plane by taking the Points to be the lines of the biaffine plane,
partitioned into Parts by slope, and taking the Blocks to be biaffine
points. This yields a $\td(q,q)$.
Since this incidence structure is self-dual, there is an equivalent
$\td(q,q)$ obtained by taking the Points to be the affine points,
where the partition is by vertical lines, and taking the Blocks to be
the non-vertical lines.
However, the explicit structure of the biaffine plane allows us to
derive a better branching condition than direct application of 
\Cref{prop:branching_for_td}.
\begin{prop} \label{prop:n6_examples_biaffine}
The Levi graph $\LeviB_q$ of the biaffine plane over $\F_q$ is a
finite, simplicial, vertex-transitive graph with diameter 4.
The graph $\LeviB_2$ is an octagon. For $q>2$, $\LeviB_q$ has girth 6 and  $(q-1, 6)$-branching.  
\end{prop}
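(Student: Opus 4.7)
The plan is to mirror the arguments for \Cref{prop:n6_examples} and \Cref{prop:branching_for_affine_plane}, exploiting the explicit automorphism group listed just above the statement. The graph is finite and simplicial by construction. For vertex-transitivity, the translations $\phi_{1,\beta,1,\delta}$ act transitively on points; composing with $\iota$ and a further translation transports any line to any other line, and $\iota$ exchanges the two vertex classes, giving transitivity on all of $V\LeviB_q$. For the diameter, bipartiteness forces same-type distances to be even; two distinct points $(x_1,y_1),(x_2,y_2)$ lie on a common line iff $x_1\neq x_2$, so they are at distance $2$ or $4$ accordingly, and symmetrically for pairs of lines via slopes. Hence the diameter equals $4$.

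For $q=2$, every vertex has valence $q=2$, so $\LeviB_2$ is a disjoint union of cycles; vertex-transitivity and $|V\LeviB_2|=8$ force it to be a single octagon. For $q>2$, bipartiteness plus the fact that any two points lie on at most one line forbid $4$-cycles, so the girth is at least $6$; an explicit hexagon through $(0,0)$, $[0,0]$, $(1,0)$ and two lines of distinct nonzero slopes exhibits girth exactly $6$.

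The core of the argument is the $(q-1,6)$-branching. Using $\iota$ and $\phi$-transitivity I reduce to two standard forms of the induced path $P$. When $|P|=3$, \Cref{lem:usually_induced} makes the induced-union condition automatic, so it suffices to exhibit $q-1$ hexagons pairwise intersecting exactly in $P$, via a direct parameter count analogous to the end of the proof of \Cref{prop:branching_for_td}. When $|P|=2$, I normalize $P=(0,0)\edge[0,0]$; then the $q-1$ other neighbors of $(0,0)$ are $u_m:=[m,0]$ for $m\in\F_q^*$, and the $q-1$ other neighbors of $[0,0]$ are $v_x:=(x,0)$ for $x\in\F_q^*$. A hexagon extending $u_m\edge(0,0)\edge[0,0]\edge v_x$ is parameterized by a choice of closing point $(s,ms)\in u_m$ with $s\notin\{0,x\}$ (so the remaining line through $(x,0)$ and $(s,ms)$ is not vertical), and each individual hexagon is induced by a direct check using that $m,s,x\in\F_q^*$.

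The main obstacle is choosing a bijection $m_i\mapsto x_i$ and closing parameters $s_i$ so that the $q-1$ hexagons $H_i$ pairwise intersect exactly in $P$ and have induced union. By \Cref{lem:usually_induced}, the only potential chord obstructing induced-ness of $H_i\cup H_j$ is an adjacency between a specific pair of ``middle'' vertices, which translates into a single algebraic equation in $(m_i,x_i,s_i)$ and $(m_j,x_j,s_j)$. I would try the ansatz $x_i=am_i$, $s_i=m_i$ for a fixed $a\in\F_q\setminus\{0,1\}$; the obstruction then reduces to $m_j/m_i\neq a/(1-a)$ for all $i\neq j$, which is satisfied by $a=1/2$ in odd characteristic. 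In characteristic~$2$, or if this ansatz fails for a particular collection of input slopes, a counting argument analogous to the proof of \Cref{prop:branching_for_genm}---adding hexagons one at a time and bounding the number of bad choices of $s_i$---handles the remaining cases.
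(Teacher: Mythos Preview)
Your outline matches the paper's closely: normalize via the automorphisms, treat the length--2 base path using \Cref{lem:usually_induced}, and for the base edge $(0,0)\edge[0,0]$ build $q-1$ hexagons through a closing point on $[m_i,0]$. Your odd-characteristic ansatz $x_i=am_i$, $s_i=m_i$ with $a=1/2$ is valid; the obstruction you derive is correct and collapses to $m_i=m_j$.

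The gap is the characteristic--2 case. The counting argument you invoke from \Cref{prop:branching_for_genm} does \emph{not} reach $(q-1,6)$--branching: for girth~6 (the $m=3$ case there) that argument yields only $n=\lfloor q+1-\sqrt{q}\rfloor$, and the situation here is no better. Concretely, adding hexagons one at a time, each previously chosen $H_i$ forbids up to two values of the new closing parameter $s_k$ (one from ``new point on old line'', one from ``old point on new line''), so after $k-1$ steps you have at most $q-2-2(k-1)$ valid choices of $s_k$, which runs out near $k\approx q/2$. So the fallback does not prove the stated $(q-1,6)$--branching.

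The paper avoids this by a different, characteristic-free choice of closing parameter. In your notation, take $s_i=m_i^{-1}$, so the closing point is $(m_i^{-1},1)$ for every~$i$. Then the obstructing adjacency $(s_j,m_js_j)\in L_i$ becomes $1=\frac{1}{m_i^{-1}-x_i}(m_j^{-1}-x_i)$, which forces $m_i^{-1}=m_j^{-1}$, a contradiction. The only extra requirement is $s_i\neq x_i$, i.e.\ $m_ix_i\neq 1$; since $q>2$ one can choose the bijection $m_i\mapsto x_i$ to avoid this (a derangement relative to $m\mapsto m^{-1}$ exists on $\F_q^*$). This single ansatz handles all $q>2$ at once and is what makes the proof go through.
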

\begin{proof}
The graph $\LeviB_2$ can be checked by explicit construction, so assume $q>2$.
The automorphism group of $\LeviB_q$ is transitive on vertices and the
stabilizer of $(0,0)$ is transitive on its neighbors, so it suffices to suppose that the base edge is $(0,0)\edge [0,0]$.
The other neighbors of $(0,0)$ are $[x_i,0]$ where $x_i$ ranges over the elements of $\F_q^*$.
The other neighbors of $[0,0]$ are $(y_i,0)$ where $y_i$ ranges over the elements of $\F_q^*$.
We will assume that the indexing of one of these sets is given, and
the other one can be chosen arbitrarily except for the condition that
$x_iy_i\neq 1$, which is possible since $q>2$.

For any choice of $z_i\in\F_q^*$ with $z_i\neq y_i$ we can find a hexagon $H_i$ as in \Cref{fig:biaffine_hexagon}.
For our purposes, choose $z_i=x_i^{-1}$ for all $i$. 
Then, to see that $\bigcup_{1\leq i<q}H_i$ is induced, suppose
$(z_j,x_jz_j)\in [\frac{x_iz_i}{z_i-y_i},\,\frac{-x_iy_iz_i}{z_i-y_i}]$.
Having chosen $z_i=x_i^{-1}$ and $z_j=x_j^{-1}$, this yields:
\[1=x_j z_j=\frac{x_i z_i}{z_i-y_i}\cdot z_j+\frac{-x_i y_i z_i}{z_i-y_i}=\frac{x_j^{-1}}{x_i^{-1}-y_i}-\frac{y_i} {x_i^{-1}-y_i}\]
Solve for $x_j$ to find $x_j=x_i$, which is a contradiction.

\begin{figure}[h]
  \centering
  \begin{tikzpicture}\small
    \coordinate[label={[label distance=0pt] 180:$(0,0)$}] (u) at (-120:1);
    \coordinate[label={[label distance=0pt] 0:$[0,0]$}] (v) at (-60:1);
    \coordinate[label={[label distance=0pt] 180:$[x_i,0]$}] (ui) at (180:1);
    \coordinate[label={[label distance=0pt] 0:$(y_i,0)$}] (vj) at (0:1);
    \coordinate[label={[label distance=0pt] 0:$[\frac{x_iz_i}{z_i-y_i},\,\frac{-x_iy_iz_i}{z_i-y_i}]$}] (ijk) at (60:1);
    \coordinate[label={[label distance=0pt] 180:$(z_i,x_iz_i)$}] (k) at (120:1);
    \draw (u)--(v)--(vj)--(ijk)--(k)--(ui)--(u);
  \end{tikzpicture}
  \caption{Hexagon $H_i$ in $\LeviB_q$ with base edge $(0,0)-[0,0]$.}
  \label{fig:biaffine_hexagon}
  \figurealttext{A hexagon labeled $H_i$ in the Levi graph $\LeviB_q$ containing the base edge $(0,0)\edge [0,0]$. Moving cyclically around the hexagon from $(0,0)$, the vertices are $(0,0)$, $[0,0]$, $(y_i,0)$, $[\frac{x_iz_i}{z_i-y_i},,\frac{-x_iy_iz_i}{z_i-y_i}]$, $(z_i,x_iz_i)$, and $[x_i,0]$. The figure illustrates one of the hexagons used in the $(q-1,6)$-branching argument.}
\end{figure}

Checking that there are many hexagons for the case that the base is a
segment of length 2 is easy, and, as in previous cases, we do not need
to choose very carefully to avoid the union being non-induced, since
this is a consequence of the girth 6 condition. 
\end{proof}

\begin{corollary}
  $W_{\LeviB_q}$ is hyperbolic.
  Its boundary is the circle if $q=2$. Otherwise, $\bdry W_{\LeviB_q}$ is the Menger curve, with conformal dimension bounded below by a function that goes to infinity with $q$.
\end{corollary}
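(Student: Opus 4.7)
The plan is to mirror the structure of the two previous corollaries for $\LeviP_q$ and $\LeviA_q$, splitting into cases based on the value of $q$.

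For $q=2$, \Cref{prop:n6_examples_biaffine} tells us that $\LeviB_2$ is an 8--cycle. An 8--cycle has no induced 4--cycles, so $W_{\LeviB_2}$ is hyperbolic by \Cref{lemma:RACG_props}. Moreover, the RACG defined by an $n$--cycle with $n\geq 5$ is a cocompact lattice in $\Isom(\mathbb{H}^2)$ acting on a tessellation by right-angled $n$--gons; hence its Davis complex is quasi-isometric to $\mathbb{H}^2$ and $\bdry W_{\LeviB_2}\cong S^1$.

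For $q\geq 3$, \Cref{prop:n6_examples_biaffine} supplies $(q-1,6)$--branching, so \Cref{prop:branching_usually_implies_menger} immediately yields hyperbolicity, and \Cref{thm:confdim_lower_bounds} gives the conformal dimension lower bound $1+\log(q-1)/\log 11$, which tends to infinity with $q$. When $q\geq 4$ we have $(3,6)$--branching, so \Cref{prop:branching_usually_implies_menger} identifies $\bdry W_{\LeviB_q}$ as the Menger curve with no further work.

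The only remaining case is $q=3$, where $(2,6)$--branching guarantees the boundary is either the Sierpinski carpet or the Menger curve depending on planarity of $\LeviB_3$, so it suffices to verify nonplanarity (this is the only step that is not formal). The graph $\LeviB_3$ has $V=18$ vertices, is $3$--regular (so $E=27$), and has girth~$6$. If it were planar, Euler's formula $V-E+F=2$ together with $6F\leq 2E$ would force $F\leq 9$ and hence $V-E+F\leq 0$, a contradiction. Therefore $\LeviB_3$ is nonplanar, and \Cref{prop:branching_usually_implies_menger} identifies $\bdry W_{\LeviB_3}$ as the Menger curve, completing the proof.
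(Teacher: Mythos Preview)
Your proof is correct and follows essentially the same structure as the paper's: handle $q=2$ separately via the octagon, then for $q>2$ invoke \Cref{prop:n6_examples_biaffine}, \Cref{prop:branching_usually_implies_menger}, and \Cref{thm:confdim_lower_bounds}, with only the nonplanarity of $\LeviB_3$ requiring separate justification. The paper disposes of this last point by identifying $\LeviB_3$ as the Pappus graph and citing its well-known nonplanarity, whereas you give a self-contained Euler-characteristic argument; both are fine, and yours has the advantage of being independent of that identification.
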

\begin{proof}
The graph $\LeviB_2$ is an octagon, so $W_{\LeviB_2}$ is virtually a closed surface group and $\bdry W_{\LeviB_2}$ is a circle.         
For $q>2$  the result follows from \Cref{prop:n6_examples_biaffine},
\Cref{prop:branching_usually_implies_menger},  and
\Cref{thm:confdim_lower_bounds}, except that nonplanarity of $\LeviB_3$ is not forced by branching.
However, $\LeviB_3$ is the Pappus graph, which is nonplanar.
\end{proof}

\bibliographystyle{hypersshort}
\bibliography{Pontryagin_boundaries_references.bib}

\end{document}